\documentclass[a4paper,12pt, USenglish,cleveref,autoref,thm-restate]{article}

\title{Topological simplification guided by forbidden regions}

\usepackage{amssymb}
\usepackage{makecell}
\usepackage{centernot}
\usepackage[ruled,vlined,linesnumbered]{algorithm2e}
\usepackage{amsthm}
\usepackage{thm-restate}
\usepackage{mathtools}
\usepackage{float}
\usepackage[normalem]{ulem}
\usepackage{xcolor}
\usepackage{geometry}
\usepackage{enumitem}
\usepackage{caption}
\usepackage[T1]{fontenc}
\usepackage[utf8]{inputenc}
\usepackage{appendix}
\usepackage{etoolbox}
\usepackage{hyperref}
\usepackage{cleveref}  

\newtheorem{theorem}{Theorem}
\newtheorem{lemma}[theorem]{Lemma}
\newtheorem{corollary}[theorem]{Corollary}
\newtheorem{proposition}[theorem]{Proposition}
\newtheorem{definition}[theorem]{Definition}
\newtheorem{observation}[theorem]{Observation}

\DontPrintSemicolon
\SetKw{KwAnd}{and}
\SetKw{KwOr}{or}
\SetKw{KwForAll}{for all}
\SetKw{KwWhile}{while}
\SetKw{KwDo}{do}
\SetKw{KwThen}{then}
\SetKw{KwIn}{in}
\SetKw{KwEq}{=} 

\newcommand{\proofappendix}{\textit{\small Proof may be found in Appendix.}}

\newcommand{\mf}{h}
\newcommand{\omf}{h'}

\DeclareMathOperator{\bd}{BD}

\newcommand{\diagbd}{\overline{\bd}}
\newcommand{\upbd}{\hat{\bd}}

\DeclareMathOperator{\low}{low}

\AfterEndEnvironment{theorem}{\vspace{-\parskip}\vspace{0pt}}
\AfterEndEnvironment{definition}{\vspace{-\parskip}\vspace{0pt}}
\AfterEndEnvironment{lemma}{\vspace{-\parskip}\vspace{0pt}}
\AfterEndEnvironment{corollary}{\vspace{-\parskip}\vspace{0pt}}
\AfterEndEnvironment{observation}{\vspace{-\parskip}\vspace{0pt}}

\makeatletter
\def\thm@space@setup{%
  \thm@preskip=10pt
  \thm@postskip=10pt
}
\makeatother

\newcommand {\mm}[1] {\ifmmode{#1}\else{\hmBox{\(#1\)}}\fi}
\newlist{romanenumerate}{enumerate}{1}
\setlist[romanenumerate]{label=(\roman*)}
\newlist{bracketenumerate}{enumerate}{1}
\setlist[bracketenumerate]{label=(\arabic*)}

\newcommand{\dmf}{\text{dMf}}

\newcommand{\mU}{U}

\newcommand{\hmU}{\mU}
\newcommand{\hmUaft}{\hat{\mU}}
\newcommand{\hmUbef}{\mU}

\newcommand{\cmU}{\mU^{\perp}}
\newcommand{\cmUaft}{\hat{\mU}^{\perp}}
\newcommand{\cmUbef}{\mU^{\perp}}

\newcommand{\fhR}{\mathcal{R}^\urcorner}
\newcommand{\fcR}{\mathcal{R}^\llcorner}

\newcommand{\mB}{D}
\newcommand{\hmB}{D}
\newcommand{\hmBaft}{\hat{\mB}}

\newcommand{\cmB}{D^{\perp}}

\newcommand{\mR}{R}
\newcommand{\hmR}{R}
\newcommand{\cmR}{R^{\perp}}

\newcommand{\mV}{V}
\newcommand{\cmV}{V^{\perp}}

\newcommand{\mD}{D}
\newcommand{\cmD}{D^{\perp}}

\newcommand{\cx}{x}
\newcommand{\cy}{y}
\newcommand{\cz}{z}
\newcommand{\cp}{p}
\newcommand{\cq}{q}
\newcommand{\ce}{e}

\newcommand{\ca}{a}

\newcommand{\cc}{c}

\newcommand{\bth}[1]{#1^\circ}
\newcommand{\dth}[1]{#1^{\!\times}}
\newcommand{\pbeg}[1]{#1^\sqsubset}
\newcommand{\pend}[1]{#1^\sqsupset}

\newcommand{\homo}{\xrightarrow{\times}}
\newcommand{\nohomo}{\centernot{\xrightarrow{\times}}}

\newcommand{\rel}{\rightarrow}
\newcommand{\norel}{\nrightarrow}

\newcommand{\cohomo}{\xrightarrow{\circ}}
\newcommand{\nocohomo}{\centernot{\xrightarrow{\circ}}}

\newcommand{\RR}{\mathbb{R}}
\newcommand{\ZZ}{\mathbb{Z}}
\newcommand{\bds}{\alpha}
\newcommand{\bdt}{\beta}
\newcommand{\bda}{\xi}
\newcommand{\bdb}{\gamma}

\newcommand{\NN}{\mathbb{N}}

\newcommand{\cV}{\mathcal{V}}

\newcommand{\cM}{\mathcal{M}}

\newcommand{\morsehmU}{\mU_{\cM}}
\newcommand{\morsecmU}{\mU_{\cM}^{\perp}}

\newcommand{\cA}{\mathcal{A}}

\newcommand{\pathhs}{Path}

\def\pathh#1{\overset{#1}{\rightsquigarrow}}

\def\notpathh#1{\overset{#1}{\not\rightsquigarrow}}

\def\card#1{\##1}

\DeclareMathOperator{\crit}{Crit}
\DeclareMathOperator{\vect}{Vec}

\def\pathh#1{\overset{#1}{\rightsquigarrow}}

\author{Jakub Leśkiewicz, Bartosz Furmanek, Michał Lipiński, Dmitriy Morozov}

\begin{document}

\maketitle
\footnotetext{
    \textit{Key words and phrases.} Persistent homology, topological simplification, depth posets.\\
\textit{Jakub Leśkiewicz}: The research was partially funded by the Polish National Science Center
under Opus Grant No. 2019/35/B/ST1/00874 and Opus Grant 2025/57/B/ST1/00550.
\textit{Bartosz Furmanek}: The research was partially funded by the Polish National Science Center under
Opus Grant No. 2019/35/B/ST1/00874 and Opus Grant 2025/57/B/ST1/00550.
\textit{Michał Lipiński}: This project has received funding from the European Union’s Horizon 2020 research
and innovation programme under the Marie Skłodowska-Curie Grant Agreement No. 101034413.
\textit{Dmitriy Morozov}: This work was supported in part by the U.S. Department of Energy, Office
of Science, Office of Advanced Scientific Computing Research, under Contract No. DE-AC02-
05CH11231.
}
\begin{abstract}

\noindent \emph{Topological simplification} is the process of reducing complexity of a~function while maintaining its essential features. Its goal is to find a~new filter function, which reorders cells of the input complex in a~way which eliminates some persistent homological features, without affecting the rest. 
We present a~new approach to simplification based on the concept of \emph{forbidden regions} and combinatorial dynamics. It allows us to reorder and cancel critical values, whose cancellation is not possible using existing methods because they are not consecutive in the total order. Each such cancellation takes $O(c \cdot n)$ time in the worst case, where $c$ is the number of birth-death pairs and $n$ is the size of the input complex.
\end{abstract}

\section{Introduction}
\label{sec:introduction}



Simplification of real-valued functions is one of the central topics in Morse theory.
 In the classical (smooth) setting, one of the most notable examples of such simplifications is performed throughout the proof of the h-cobordism theorem~\cite{Milnor:Hcobord, Milnor:morse}.
In the discrete setting, Forman's theory~\cite{Forman1998a, Forman1998b} studies the reversal of a \emph{unique} combinatorial path between two critical points as a~way of reducing the total number of critical points and, therefore, simplifying the Morse complex.

\noindent More recently,
simplification has been studied in the context of persistent homology.
The authors of~\cite{simplification2d} introduced the problem of persistence-sensitive simplification---asking to simplify all pairs with persistence below a given threshold---and gave an algorithm for 2-manifolds. Their solution was later improved to linear time~\cite{Attali2009}. Another approach, based on Forman’s combinatorial vector fields, was presented in~\cite{Bauer:2012aa}. This work drew connection to the cancellation procedure
by observing that whenever the unique path connects two critical points with locally lowest difference in function values,
    their cancellation does not affect the remaining part of the persistence diagram~\cite{Bauer:2012aa}.
    These \emph{apparent pairs} have also been called \emph{close pairs}~\cite{DeRoSh2015}  and \emph{shallow pairs}~\cite{EdelMroz2023}. 

\noindent This observation further helped in optimization of (persistent) homology computation~\cite{Bauer2021} and shape reconstruction~\cite{BauerRoll2024}.
The idea of pruning pairs of critical cells, following Forman's approach, has been extensively studied in data visualization~\cite{ComicDeFloriani2008, CoFlIuMa2016, ImprovedRoad, FeIuFlWe2014, Gunther:2012aa, GyBrHaPa2008, RoWoSh2011}.
Recent works on topological optimization~\cite{GaGaSkGu2020, BigSteps} offer an alternative, albeit less controlled approach to simplification.

\noindent But there remains a critical gap. The works that are able to rigorously control the changes in persistent homology~\cite{simplification2d,Attali2009,Bauer:2012aa} are only able to simplify 0-dimensional persistent homology (as well as codimension-1, by duality). Meanwhile, the middle dimensions---e.g., 1-dimensional homology on 3-manifolds---are important in practice.

\noindent In this work, 
    we study how relations between persistence pairs calculated by the standard lazy reduction algorithm~\cite{ELZ02,Vineyards}
    can guide us in simplifying a discrete Morse function $\mf$, while controlling the changes in its persistence diagram \emph{in any dimension} and the overall gradient structure.
These relations organize persistence pairs in a hierarchical structure called a \emph{depth poset}~\cite{EdLiMrSo2024}.  As observed in~\cite{TransDepth,BigSteps}, these relations describe the obstacles to modifying a function without changing its persistence diagram.

\noindent Concretely, for a given persistence pair $\alpha$, 
    we define \emph{forbidden regions} for its death and birth cells, 
    which describe the parts of the persistence diagram that $\alpha$ cannot move to without changing the persistence pairing.
Conversely, when the forbidden regions leave a gap---a path from $\alpha$ to the diagonal---we can construct a~homotopy
    that brings $\alpha$ to the diagonal without changing the rest of the persistence pairs and the gradient structure.
This allows us to identify a broader family of persistence pairs, possibly with high persistence, that can be safely and selectively removed. 
We summarize our main contribution in the following theorem, 
    where $\bd(\mf)$ denotes the set of birth-death pairs induced by a discrete Morse function $\mf$, and
    $\crit(\cV_{\mf})$, the set of critical cells for $\mf$.

\begin{theorem}\label{thm:main-theorem}
    Let $h$ be a~discrete Morse function on a~Lefschetz complex $X$. 
    If $\bds\in\bd(h)$ is a~persistence pair such that  forbidden regions of its death and birth cells do not intersect,
    and there exists exactly one gradient path between the paired critical cells,
    then there exists a~discrete Morse function $h'$ on $X$ such that 
        $\bd(h')=\bd(h)\setminus\{\bds\}$
        and $\mf(x)=\omf(x)$ for all $x\in\crit(\cV_{\omf})$. 
\end{theorem}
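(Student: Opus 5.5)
The plan has two stages. First move the pair $\bds=(\bds_b,\bds_d)$ to the diagonal by a homotopy that keeps every other pair intact. Then cancel it along the unique gradient path, as in Forman's theory.

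Write $b=\bds_b$ for the birth cell and $d=\bds_d$ for the death cell. The forbidden region of $d$ is the set of values $(\mf(b),t)$ that $d$ cannot take without changing the pairing. Concretely, these are the values lying beyond some cell $d'$ for which the lazy reduction records a relation $d \to d'$ in the depth poset, so that moving past $d'$ would force a transposition switching the pairing, in the sense of vineyards. The forbidden region of $b$ is defined dually. The hypothesis that the two regions do not intersect then gives a gap, namely a target value $v$ with $\mf(b)\le v\le \mf(d)$ (strictly between, except at the endpoints) with the following properties: $b$ can be raised monotonically to $v$ and $d$ can be lowered monotonically to $v$, and neither move crosses a forbidden value.

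The first key step is a straight-line homotopy $\mf_t$ in which only the values of $b$ and $d$ (and, where needed, their gradient partners) change. I would argue via the vineyard update rules: each time $b$ or $d$ swaps order with another cell $c$, the swap is a transposition of consecutive simplices. By the standard case analysis of Cohen-Steiner, Edelsbrunner and Morozov, such a swap alters the pairing only when there is a relation between the two pairs involved, and the definition of forbidden regions is exactly that such relations put $c$ inside a forbidden region. Since the path avoids these regions, every other pair keeps its endpoints and values, and $\bds$ slides to $(v,v)$ while remaining a pair. I must also check that $\mf_t$ remains a discrete Morse function with the same gradient $\cV_\mf$. If a non-critical cell adjacent to $b$ or $d$ would violate the Morse inequalities, I would shift its value slightly together with $b$ or $d$. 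This does not change the filtration order relevant to persistence, because such a cell is non-critical and its partner moves with it.

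With $\bds$ on the diagonal, $b$ and $d$ are consecutive in the order, up to non-critical cells, which I would order appropriately. The unique gradient path from $d$ to $b$ can now be reversed, following Forman, to produce a new vector field $\cV'$ in which $b$ and $d$ are no longer critical. I would then define $\omf$ by slightly perturbing the values along the reversed path so that it is a discrete Morse function with gradient $\cV'$. The perturbed values all lie in a tiny window around $v$, so critical cells keep their values, which gives $\mf(x)=\omf(x)$ on $\crit(\cV_{\omf})$. Finally, reversing the path is a discrete Morse cancellation that removes one persistence pair of zero length, so $\bd(\omf)=\bd(\mf)\setminus\{\bds\}$.

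I expect the main obstacle to be the first step: proving that avoiding the forbidden regions is sufficient, meaning that no transposition along the homotopy changes the pairing. This requires matching the paper's definition of forbidden regions precisely with the vineyard switch conditions, including the cases where $b$ or $d$ passes a non-critical cell or a pair whose relation to $\bds$ is only transitive. I would try induction over the sequence of transpositions, maintaining the invariant that the lazy reduction matrix restricted to the other pairs is unchanged.
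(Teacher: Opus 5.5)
\textbf{The first stage has a genuine gap.} Your second stage matches the paper in spirit: reverse the unique path, redefine values in a small window, and treat the result as a cancellation. The problem is the first stage, and it is not only a matter of matching definitions.

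The forbidden regions in the hypothesis are not sets of values of $d$ with $b$ frozen. Here $b=\bth\bds$ and $d=\dth\bds$. The region $\fhR_{\mf}(\bds)$ is a union of lower-left quadrants with corners at the pairs $\bdt\homo\bds$ and at critical cells reachable from $d$. The region $\fcR_{\mf}(\bds)$ is a union of upper-right quadrants. Their disjointness gives neither a common value $v$ to which $b$ can be raised and $d$ lowered independently, nor a straight segment from $(\mf(b),\mf(d))$ to the diagonal that avoids both.

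Take $\mf(b)=0$ and $\mf(d)=10$, with the only constraints coming from two pairs:
a pair $\bdt$ at $(1,9)$ with $\bdt\homo\bds$ and $\bdt\nocohomo\bds$, and
a pair $\bdb$ at $(2,7)$ with $\bdb\cohomo\bds$ and $\bdb\nohomo\bds$.
Then $\fhR_{\mf}(\bds)=[-\infty,1]\times[-\infty,9]$ and $\fcR_{\mf}(\bds)=[2,+\infty]\times[7,+\infty]$ are disjoint. Yet with $b$ fixed, $d$ cannot go below $9$, and with $d$ fixed, $b$ cannot go above $2$. A segment $y=10-kx$ misses the first quadrant only if $k<1$ and the second only if $k>3/2$. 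So along your linear homotopy, either $d$ passes $\dth\bdt$ while $\bdt\homo\bds$ still holds, or $b$ passes $\bth\bdb$ while $\bdb\cohomo\bds$ holds. Either event is a switch, by case (2) of Theorem~\ref{thm:death_transposition} or Theorem~\ref{thm:birth_transposition}. This is exactly the obstacle described at the start of Section~\ref{sec:evade}.

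\textbf{The missing idea is that the motion must be adaptive, a staircase.} Moving $b$ past $\bth\bdt$ without a switch kills the relation $\bdt\homo\bds$ via update rule (1) of Theorem~\ref{thm:birth_transposition}. Only after that may $d$ descend below $\mf(\dth\bdt)$. Turning this into a proof needs two ingredients:
(i) non-switching moves of the components of $\bds$ can only destroy relations into $\bds$ (Proposition~\ref{prop:we_can_only_lose}), so the forbidden regions only shrink (Lemma~\ref{lem:we_can_only_lose_regions});
(ii) the local argument of Theorem~\ref{thm:when_you_can_move_to_diagonal}. Suppose the next critical cell $x$ above $b$ is blocked ($x\pathh{}b$ or $x\cohomo b$) and the previous critical cell $y$ below $d$ is blocked ($d\pathh{}y$ or $y\homo d$). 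Then by Corollary~\ref{cor:bd_pairs} and Observation~\ref{obs:left_up_corner} the current regions intersect, and by (i) so do the original ones.

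Your proposed induction invariant is also false. Case (1) of Theorems~\ref{thm:death_transposition} and \ref{thm:birth_transposition} rewrites rows of $U$ and $U^{\perp}$ without any switch. These updates are precisely what changes which later moves are safe, so the relations must be tracked rather than held fixed.

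\textbf{Two smaller points.} First, the gradient part of the regions is missing from your description. A critical $x$ with $d\pathh{}x$ is an absolute barrier for $d$. What must move with $d$ (resp.\ $b$) is every non-critical cell on a gradient path out of $d$ (resp.\ into $b$) with value in the window, not just the neighbours. This is what Propositions~\ref{prop:pre_allowed_move_right} and \ref{prop:pre_allowed_move_down} do.

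Second, saying the reversal ``removes a pair of zero length'' does not show that the other pairs survive. For that you need this: once $b$ and $d$ are consecutive among critical cells, no pair lies in their bottom-right quadrant, so $\bds$ is shallow in the Morse complex. Then Theorems~\ref{thm:cancelation_is_path_reversing} and \ref{thm:cancelation} apply.
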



\noindent We present a constructive proof to this theorem,
    which provides an algorithm explicitly tracking all changes in relations throughout the homotopy and the final path reversal. 
As a result, we obtain already computed relations between pairs in $\bd(h')$, which enables iterative simplification.


\section{Preliminaries}
\label{sec:preliminaries}

\begin{definition}\label{def:lefschetz-complex}
    A \emph{Lefschetz complex} is a~triplet $(X,\dim,\mB)$, where $X$ is a~finite set of elements called cells, 
        $\dim: X\rightarrow\mathbb{N}$ 
        is a~map assigning a~dimension to each cell, 
    and $\mB:X\times X\rightarrow \mathbb{Z}_2$ is the boundary coefficient 
    such that
    $\mB(\cx,\cy)\neq 0$ implies $\dim \cx+1=\dim \cy$, 
        in which case we say $\cx$ is a~\emph{facet} of $\cy$.
    Additionally, we require that for any $\cx,\cy\in X$ we have
    $\sum_{z\in X} \mB(\cx,\cz)\cdot\mB(\cz,\cy) = 0$.
    We also define the coboundary coefficient as $\cmB(y,x)\coloneqq\hmB(x,y)$.

\end{definition}

 \noindent Lefschetz complexes generalize simplicial, cubical, and cellular complexes while remaining concrete enough to define persistent homology.
When it does not lead to confusion, we shorten the notation and refer to the set of cells, $X$, as the Lefschetz complex.
We often interpret $\mB$ and $\cmB$ as a~matrix, in which case we put the arguments in the square brackets for emphasis, e.g., $\mB[x, y]$.
We write $X_{n}$ for the set of $n$-dimensional cells of $X$, and $\hmB_n$, $\cmB_n$ for the $n$-th boundary and coboundary matrix, respectively.

\begin{definition}\label{def:dmf}(Discrete Morse function)
    Let $X$ be a~Lefschetz complex.
    A~map $\mf:X\rightarrow\RR$ is called 
    a~\emph{discrete Morse function} ($\dmf$, for short) if the following conditions are satisfied for all $\cx,\cy\in X$.
    \begin{romanenumerate}
        \item if $\hmB(\cx,\cy)=1$ then $\mf(\cx)\leq \mf(\cy)$ (weak monotonicity),
        \item if $\mf(\cx)=\mf(\cy)$ then either $\hmB(\cx,\cy)=1$ or $\hmB(\cy,\cx)=1$ (pairing),
        \item\label{it:dmf_injection} for every $\cy\in\RR$, we have $\card{\mf^{-1}(\cy)}\leq 2$ (almost injective).  
    \end{romanenumerate}
\end{definition}

\noindent In particular, we say that $X$ is \emph{filtered} by $\mf$. 
It also induces an $\mf$-order on $X$:
\[
\cx <_{\mf} \cy \iff \big(\mf(\cx)<\mf(\cy)\big)\ \text{or}\ \big(\mf(\cx)=\mf(\cy)\ \text{and}\ \dim\cx<\dim\cy\big).
\]



\noindent If $X$ is filtered by a $\dmf$ $\mf$, 
    then we always assume that rows and columns of $\hmB_{n}$ are ordered by the $\mf$-order, 
    and those of $\cmB_{n}$ by the reversed $\mf$-order.
    
\noindent To calculate persistent homology, we use the original version of the persistence algorithm~\cite{ELZ02}, called \emph{lazy reduction algorithm}, described in the form we need in \cite{BigSteps}. The algorithm relies on an auxiliary function 
$\low$, which, for a~given column, returns the index of the row containing the lowest non-zero entry in that column. For a~given (co)boundary matrix $\mB_{n}$, Algorithm~\ref{alg:lazy-reduction} performs successive column additions, which results in a~decomposition $\mB_{n}=\mR_{n}\mU_{n}$ with $\mU_{n}$ invertible and upper triangular.  Moreover, if $\cx \neq \cy$ and $\mU_{n}[\cx, \cy] \neq 0$, then column $\mR_{n}[:, \cx]$ was added to $\mR_{n}[:, \cy]$ by the algorithm. Observe that in \(\mD_n\) the rows are indexed by \((n-1)\)-dimensional cells and the columns by \(n\)-dimensional cells.
The same holds for \(\mR_n\); however, both the rows and columns of \(\mU_n\) are indexed by \(n\)-dimensional cells.
Similarly, we obtain $\cmD=\cmR\cmU$ decomposition by applying $\cmD$ to the algorithm.

\begin{algorithm}[ht]
\caption{Lazy reduction of the matrix over $\mathbb{Z}_2$.}\label{alg:lazy-reduction}

  $\mR_n \KwEq \mB_n,\quad \mU_n \KwEq I$\;

\For{$y$ over the columns of $\mR_n$ (left to right)}{%
  \While{$\mR_n[:,\cy]\neq 0$ \KwAnd
         \textup{there exists a~preceding column $x$ with }
         $\low (\mR_n[:,\cx])=\low (\mR_n[:,\cy])$}{
    
    $\mR_n[:,\cy] \gets R_n[:,\cy] + \mR_n[:,\cx]$\;
    
    $\mU_n[\cx,:] \gets \mU_n[\cx,:] +  \mU_n[\cy,:]$\;
  }
}
\end{algorithm}

\noindent We say that $\bds = (\bth\bds, \dth\bds)$ is an ($n$-dimensional) \emph{birth-death} pair if $\bth\bds$ is a~low of $\hmR_{n+1}[:, \dth\bds]$. $\bds$ is an ($n$-dimensional) birth-death pair if and only if $\dth\bds$ is a~low of $\cmR_{n}[:, \bth\bds]$~\cite{dualities}. We refer to $\bth\bds$ and $\dth\bds$ as \emph{birth} and \emph{death} cells, respectively. The \emph{dimension} of a~birth-death pair is the dimension of its birth cell. 
We denote the set of all birth-death pairs by $\bd(\mf)$ and the set of all $n$-dimensional birth-death pairs by $\bd_{n}(\mf)$. The cells in dimension $n$ that are not paired at all---their columns in $\hmR_n$ are zero, and there are no columns in $\hmR_{n+1}$ that have them as the lowest non-zero entry---are \emph{$n$-dimensional homology generators}.
It is convenient to assume that these generators also belong to some birth-death pair, even if its second component is undefined. 

\noindent Let $\cx, \cy \in X_n$. If $\hmU_n[\cx, \cy] = 1$, we say that there is a~\emph{homological relation} between $\cx$ and $\cy$ and denote this fact by $\cx \homo \cy$. Dually, if $\cmU_n[\cx, \cy] = 1$, we write $\cx \cohomo \cy$ to indicate a~\emph{cohomological relation}. If the relation type is not important, we simply write $\cx \rel \cy$. If $\cx$ and $\cy$ are unrelated, we write $\cx \norel \cy$, adding a~superscript to specify the missing relation type, e.g., $\cx \nohomo \cy$ if $U[x,y] = 0$. Observe that if $\cx \homo \cy$, then $\cx$ must be a death cell, whereas $\cy$ may be either a death cell or birth cell. Similarly if $\cx \cohomo \cy$ then $\cx$ has to be a birth cell, while the type of $\cy$ remains unspecified.

\noindent We extend these notions to birth-death pairs: $\bdt \homo \bds$ whenever $\dth\bdt$ is homologically related to any component of $\bds$; similarly for other kinds of arrows. 
As rows and columns of $\hmB_{n}$ and $\cmB_{n}$ are ordered with respect to the $\mf$-order and the reversed $\mf$-order, so are $\hmR_{n}$, $\cmR_{n}$, $\hmU_{n}$ and $\cmU_{n}$. We emphasize that $\cmR_{n}$ and $\cmU_{n}$ are not transposed matrices $\hmR_{n}$ and $\hmU_{n}$, but components of lazy decomposition $\cmB_n = \cmR_{n} \cmU_n$.

\noindent \emph{The persistence diagram} is a~set of two dimensional points $(\mf(\bth\bds),\mf(\dth\bds))$ for $\bds \in \bd(\mf)$. When we visualize a~persistence diagram (see Figure \ref{fig:transposition}), it is convenient to add the \emph{diagonal}, i.e., all points $(x,x)$ for $x \in \RR$, and to annotate the arrows with the type of the relation. Since a~$\dmf$ is not injective in general, it can generate birth-death pairs on the diagonal of the persistence diagram.  We denote the set of such diagonal pairs by $\diagbd(\mf)$ and use notation $\upbd(\mf)$ for the pairs above the diagonal.

\begin{observation}\label{obs:left_up_corner}
    If pairs $\bds,\bdt \in \bd_{n}(\mf)$ and $\bdt \rel \bds$,  then $\mf(\bth\bdt) > \mf(\bth\bds)$ and $\mf(\dth\bdt) < \mf(\dth\bds)$. (see \cite[Lemma 2.1]{BigSteps})
\end{observation}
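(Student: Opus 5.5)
We have to prove Observation: if $\bds,\bdt\in\bd_n(\mf)$ and $\bdt\rel\bds$, then $\mf(\bth\bdt)>\mf(\bth\bds)$ and $\mf(\dth\bdt)<\mf(\dth\bds)$. The argument splits by relation type. The homological case uses the lazy reduction of $\hmB_{n+1}$ together with the low-uniqueness of reduced columns. The cohomological case then follows by the same argument applied to $\cmB_n$, whose order is reversed, which swaps the roles of birth and death.

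\emph{Homological case.} Suppose $\dth\bdt\homo\bds$, so $\hmU_{n+1}[\dth\bdt,z]=1$ with $z$ a component of $\bds$ of dimension $n+1$. Hence $z=\dth\bds$, and the $n$-dimensional birth cell plays no role here. Since $\hmU$ is upper triangular and $\dth\bdt\neq\dth\bds$, we get $\dth\bdt<_{\mf}\dth\bds$, and therefore $\mf(\dth\bdt)\le\mf(\dth\bds)$. Equality would force the two cells to be paired with each other, which is impossible since both have dimension $n+1$. So the inequality is strict.

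For the births, recall the stated property of Algorithm~\ref{alg:lazy-reduction}: the entry $\hmU[\dth\bdt,\dth\bds]=1$ means the reduced column $\hmR[:,\dth\bdt]$ was added to the column of $\dth\bds$. This addition happens only when the two current lows agree. At that moment the low of $\dth\bdt$ is already final, namely $\bth\bdt$, because earlier columns are fully reduced. Each addition strictly decreases the low of the column being reduced, or zeroes it, and the final low of $\dth\bds$ is $\bth\bds$. Hence $\bth\bds<_{\mf}\bth\bdt$, and $\bth\bds\neq\bth\bdt$ by uniqueness of lows.

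Translating the order into values, we get $\mf(\bth\bds)\le\mf(\bth\bdt)$. Equality is again excluded: two distinct $n$-cells with equal value would violate the pairing condition of Definition~\ref{def:dmf}.

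\emph{Cohomological case.} Suppose $\bth\bdt\cohomo\bds$. The coboundary matrix $\cmB_n$ has columns indexed by $n$-cells in reversed $\mf$-order, and its lows are $(n+1)$-cells. The same reasoning applies with the order reversed. Upper triangularity of $\cmU$ gives $\bth\bds$ earlier than $\bth\bdt$ in the reversed order, so $\mf(\bth\bdt)>\mf(\bth\bds)$. The decreasing-low argument gives $\dth\bdt$ reversed-after $\dth\bds$, so $\mf(\dth\bdt)<\mf(\dth\bds)$. This uses the duality of pairings from \cite{dualities}: the low of $\cmR_n[:,\bth\bds]$ is $\dth\bds$.

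The main subtlety is the case where $\bds$ is an unpaired generator, so that one of its components is undefined. An essential class never appears as a related target with the missing component, since a zero reduced column is never added. Comparing only the defined coordinates, and treating the undefined death value as $+\infty$, keeps both inequalities valid.
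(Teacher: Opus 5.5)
Your proof is correct and uses the standard low-tracking argument for the lazy reduction that the paper relies on; the paper gives no proof of its own, only the pointer to \cite[Lemma 2.1]{BigSteps}. The ingredients are that a column is added only when lows coincide, that the low of the column being reduced then moves strictly up, that $U$ is upper triangular, and, for the cohomological case, the same argument run on $\cmB_n$ together with the pairing duality.

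Two small fixes are needed.
\begin{itemize}
\item From $\cmU_n[\bth\bdt,\bth\bds]=1$, upper triangularity puts $\bth\bdt$, not $\bth\bds$, earlier in the reversed order. That corrected statement is what actually gives $\mf(\bth\bdt)>\mf(\bth\bds)$.
\item In your last paragraph, ``a zero reduced column is never added'' is the reason the \emph{source} $\bdt$ always has a defined death. An essential \emph{target} $\bds$ genuinely can occur in the cohomological case, and that case is exactly what your $+\infty$ convention covers.
\end{itemize}
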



\noindent The preceding observation means that if we depict every relation between the birth-death pairs of the same dimension as arrows in the persistence diagram, then every such arrow points up and to the left.

\begin{definition}\label{def:topological_simplification}
Let $X$ be a~Lefschetz complex and let $\mf \colon X \to \RR$ be a~dMf.
A \emph{topological simplification of $\mf$} is a~discrete Morse function $\omf$ such that $\upbd(\omf) \subset \upbd(\mf)$ and $\omf(\bth\bds) = \mf(\bth\bds)$ and $\omf(\dth\bds) = \mf(\dth\bds)$, whenever $\bds \in \upbd(\omf)$.
\end{definition}

\noindent In words, a~topological simplification removes some off-diagonal persistence pairs and preserves the rest.

\begin{definition}
     A \emph{combinatorial vector field} (or a~vector field, for short) on a~Lefschetz complex $X$ is a~partition $\cV$ of $X$ into singletons, called \emph{critical cells}, and facet--cofacet pairs, called \emph{vectors}.
    $\crit(\cV)$ denotes the family of all critical cells of $\cV$; $\vect(\cV)$, the family of all vectors. We use the convention that the dimension of a vector is the smaller dimension of its two components.

\end{definition}

    \noindent A combinatorial vector field $\cV$ induces a~digraph $G_\cV=(X,E)$.
    Every edge $(\cx,\cy)\in E$ is either 
        an \emph{explicit arc} when $(\cx,\cy)\in\cV$ or
        an \emph{implicit arc} when $\hmB(\cy,\cx)=1$ and $(\cx,\cy)\not\in\cV$.
    A~path $\rho$ has \emph{dimension} $k$ if it consists only of cells of dimension $k$ and $k+1$.
    In particular, any path from $\cy$ to $\cx$, where $\cx,\cy\in\crit(\cV)$ and $k=\dim(\cx)=\dim(\cy)-1$ is of dimension $k$ and alternates between $k$ and $k+1$ dimensional cells.

    \noindent A combinatorial vector field $\cV$ is called \emph{gradient} if $G_\cV$ is acyclic. If there is a~path between vertices $\cy$ and $\cx$, then we  write $\cy \pathh{\cV} \cx$, omitting the superscript when the vector field is clear from the context. $\cV^{k}$ denotes the union of all $k$-dimensional vectors and critical cells of dimension $k$ and $k+1$. Finally, note that for a given discrete Morse function $\mf$, the non-empty preimages
$
\mathcal V_\mf \coloneqq \{\mf^{-1}(a) \mid a \in \mathbb{R},\ \mf^{-1}(a) \neq \emptyset\}
$
form a combinatorial gradient vector field.


\noindent The \emph{Morse complex} connects homological and dynamical perspectives on scalar functions. It is not required to carry out the reasoning we need, but it will simplify it considerably.  

\begin{definition}[Morse complex]
    \label{def:morse_complex}
    Let $\cV$ be a~combinatorial gradient vector field on $X$. The \emph{Morse complex of $\cV$} is a~Lefschetz complex, denoted by $\cM(\cV)$, consisting of the set of critical cells of $\cV$ along with the restriction of $\dim$. The boundary coefficient $D_{\cM}(\cx, \cy)$ is given by the number of paths in $G_\cV$ from $\cy$ to $\cx$ $\pmod{2}$, provided $\dim \cy = \dim \cx + 1$ and $0$ otherwise.
    
\end{definition}

\noindent The most useful properties of Morse complexes for our work is that they describe the off-diagonal birth-death pairs. Indeed, if $\cV_{\mf}$ is a~gradient vector field of some $\dmf$ $\mf$, its restriction to $\cM(\cV_{\mf})$, denoted by $\mf_{\cM}$, is an injective $\dmf$. 
The next observation follows from \cite[Theorem~4.3]{EdelMroz2023}. 

\begin{corollary}
\label{thm:morse_complex}
    Let $X$ be filtered by $\dmf \; \mf$. Then $\bd(\mf_{\cM}) = \upbd(\mf)$ and $\morsehmU$, $\morsecmU$ are restrictions of $\hmU$ and $\cmU$ to the critical cells.
\end{corollary}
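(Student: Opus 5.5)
A natural route is to compare the lazy reduction of $\hmD$ with the reduction of the Morse boundary matrix $D_{\cM}$ from Definition~\ref{def:morse_complex}, and to show that diagonal pairs and vectors of $\cV_{\mf}$ contribute nothing. The claim has two parts: $\bd(\mf_{\cM})=\upbd(\mf)$, and $\morsehmU$, $\morsecmU$ are the restrictions of $\hmU$, $\cmU$ to $\crit(\cV_{\mf})$. Although the paper derives it from \cite[Theorem~4.3]{EdelMroz2023}, I would give a direct argument by induction on the $\mf$-order.

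Since $\mf$ is a $\dmf$, every vector $\{x,y\}\in\vect(\cV_\mf)$ with $x$ a facet of $y$ has $\mf(x)=\mf(y)$, so $x,y$ are consecutive in the $\mf$-order. The main step is to show that the lazy reduction pairs each vector as a diagonal pair, with $x=\low(\hmR[:,y])$, and that each column of a critical cell reduces to a column supported, modulo the already-reduced columns of vectors, on critical cells. I would do this by processing cells in $\mf$-order and performing Gaussian elimination of each vector (the algebraic Morse collapse). Eliminating the pair $(x,y)$ replaces the boundary of every later cell $z$ with $\hmD[x,z]=1$ by $\hmD(z)+\hmD(y)$, and iterating these eliminations produces exactly the mod-2 path counts of Definition~\ref{def:morse_complex}.

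The key invariant is the following. Since $y$ immediately follows $x$ and $x$ is a facet of $y$, we have $\low(\hmD[:,y]) = x$ after any additions. Indeed, every other facet of $y$ has $\mf$-value at most $\mf(y)$ and precedes $x$, because the only cells with value $\mf(y)$ are $x$ and $y$. Hence, when the algorithm later encounters a column $z$ whose current low is $x$, it adds column $y$. This is the same operation as the elimination, provided the additions the algorithm performs for $z$ match those of the elimination in order. The lows then force the remaining additions to be among columns of critical cells. Lows that are critical cells are unaffected by vector columns, because vector columns only contribute entries at or above $x$.

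The main obstacle is the $\hmU$ claim. The lazy algorithm may add a vector column $y$ to a critical column $z$, so $\hmU[y,z]$ can be nonzero. The statement, however, concerns only the restriction to critical rows and columns. I would argue that each addition of a critical column $c$ to a critical column $z$ happens in both reductions under the same low condition, because the entries at critical rows agree after the vector additions are interleaved. Making this interleaving precise needs a careful induction on column index with the invariant \emph{``the critical-row part of $\hmR[:,z]$ equals the current Morse column''}, and this is where the bulk of the work lies. The cohomological statement follows by applying the same argument to $\cmD$ with the reversed order, using that pairing in $\cmR$ matches pairing in $\hmR$ \cite{dualities}. Finally, diagonal pairs are exactly the vectors, and the critical pairs are those of $\mf_\cM$.
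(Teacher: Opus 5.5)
\textbf{There is a genuine gap: the invariant you propose for the induction is false.}

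Your route is different from the paper's, which simply derives the statement from \cite[Theorem~4.3]{EdelMroz2023}. Your preliminary observations are sound. Each vector $\{x,y\}$ of $\cV_{\mf}$ is a diagonal pair with $\mR[:,y]=\mD[:,y]$. Columns of upper vector cells never receive additions. A critical column whose low is a lower vector cell $x$ receives $\mD[:,y]$.

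The problem is the invariant ``the critical-row part of $\mR[:,z]$ equals the current Morse column''. It fails already before the first addition, and also after the last one. Lazy reduction clears only the \emph{low} of a column. Entries at lower vector cells lying above the low are never pushed along the gradient, whereas $D_{\cM}[:,z]$ counts all gradient paths.

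A concrete counterexample: take vertices $c_0,x,c_1$ and edges $y=\{c_0,x\}$, $z=\{x,c_1\}$, with $\mf(c_0)=0$, $\mf(x)=\mf(y)=1$, $\mf(c_1)=2$, $\mf(z)=3$. The reduction stops immediately with $\mR[:,z]=x+c_1$, whose critical part is $c_1$. But $D_{\cM}[:,z]=\mR_{\cM}[:,z]=c_0+c_1$, because of the path $z\to x\to y\to c_0$. So the step you call ``the bulk of the work'' cannot even start. The two reductions do not perform the same eliminations at all; they only make the same \emph{decisions}.

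\textbf{How to repair it.} You need to compare lows rather than columns.
\begin{itemize}
\item \emph{Definition.} For an $(n-1)$-chain $v$, let $\Theta(v)$ be obtained by adding $\mD[:,y]$ for each lower vector cell $x$ in the support until none remain. This is well defined and linear, because the columns of upper vector cells, restricted to rows of lower vector cells, form a unitriangular matrix in the $\mf$-order.
\item \emph{Base case.} $\Theta(\mD[:,z])$ restricted to $\crit(\cV_{\mf})$ is exactly $D_{\cM}[:,z]$.
\item \emph{Correct invariant.} $\Theta(\mR[:,z])$ restricted to critical rows equals the current Morse column. It is unaffected by adding vector columns, since $\Theta(\mD[:,y])=0$. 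It is additive when a critical column is added.
\end{itemize}

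Two facts then make the two reductions make the same decisions.
\begin{itemize}
\item[(i)] If $\low\mR[:,z]=\ell$ is critical, then $\ell$ is also the low of $\Theta(\mR[:,z])$ on critical rows. Every pending lower vector cell precedes $\ell$, and flowing it out only creates entries preceding it. Your remark that vector columns only contribute entries above $x$ is exactly this ingredient, but it must be applied to $\Theta$, not to the raw column.
\item[(ii)] $\low\mR[:,z]$ is never the upper cell $r$ of a vector $\{q,r\}$ with $\dim q=n-2$; the Morse complex discards such rows. Indeed, $\mR[:,z]$ is a cycle, and $r$ is the only cofacet of $q$ not succeeding $r$. So such a low would force $(\mD_{n-1}\mR[:,z])[q]=1$, a contradiction.
\end{itemize}

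With (i) and (ii), both reductions add the same critical columns and stop at the same moment. This gives the pairing statement and the restriction statement for $\hmU$; the statement for $\cmU$ is dual.

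Alternatively, every vector of $\cV_{\mf}$ is a shallow pair. You could therefore cancel the vectors one at a time, apply Theorems~\ref{thm:cancelation} and~\ref{thm:lefschetz_is_safe} at each step, and use your path-counting description of iterated cancellation to identify the final quotient with $\cM(\cV_{\mf})$.
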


\noindent It follows that we can identify the components of the pairs in $\upbd(h)$ with elements of $\crit(\cV_{\mf})$, while the vectors are the diagonal pairs, $\vect(\cV_{\mf}) = \diagbd(\mf)$. This perspective enables us to apply the following classical theorem.

\begin{theorem}[{\cite[Theorem~9.1]{Forman2002}} ]\label{thm:reversing}
    Let $\cx$ be a~$k$-dimensional critical cell and $\cy$ be a~$k+1$ dimensional critical cell of a~gradient vector field $\cV$. If there exists a~unique path $\rho$ from $\cy$ to $\cx$, then reversing it in $\cV$ produces another gradient vector field, which we denote $\cV^{-\rho}$. The critical cells of $\cV^{-\rho}$ are exactly the critical cells of $\cV$ apart from $\cx$ and $\cy$.
\end{theorem}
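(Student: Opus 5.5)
The statement to prove is Forman's reversal theorem (Theorem~\ref{thm:reversing}). I would argue directly in the digraph $G_\cV$, treating acyclicity of $G_{\cV^{-\rho}}$ as the main claim.

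\textbf{Setup.} Write the unique path as
\[
\rho=(y=y_0,x_1,y_1,x_2,\dots,y_{m-1},x_m=x),
\]
where $\dim x_i=k$ and $\dim y_i=k+1$. Each $x_i$ is a facet of $y_{i-1}$, and $(x_i,y_i)\in\cV$ for $1\le i\le m-1$. The reversed field is
\[
\cV^{-\rho}=\bigl(\cV\setminus\{\{x_i,y_i\}:1\le i<m\}\bigr)\cup\{\{x_i,y_{i-1}\}:1\le i\le m\}.
\]

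\textbf{Step 1: $\cV^{-\rho}$ is a combinatorial vector field.} Each new pair is a facet--cofacet pair, since $D(x_i,y_{i-1})=1$ by the implicit arc $y_{i-1}\to x_i$. The cells $x_1,\dots,x_m,y_0,\dots,y_{m-1}$ are pairwise distinct because $G_\cV$ is acyclic. Each of them lies in exactly one new vector. So $\cV^{-\rho}$ is a partition, its critical cells are those of $\cV$ except $x$ and $y$, and it differs from $\cV$ only inside $\cV^k$.

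\textbf{Step 2: effect on $G$.} Passing from $G_\cV$ to $G_{\cV^{-\rho}}$ reverses exactly the edges of $\rho$. The implicit arcs $y_{i-1}\to x_i$ become explicit arcs $x_i\to y_{i-1}$, and the explicit arcs $x_i\to y_i$ become implicit arcs $y_i\to x_i$. All other edges are unchanged, including edges between different dimensions. Any cycle is confined to a single level (a $k$-path), since implicit arcs lower dimension by one, explicit arcs raise it by one, and each cell lies in at most one vector. So a new cycle must use cells of dimensions $k$ and $k+1$ and at least one reversed edge.

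\textbf{Step 3: acyclicity (the main step).} Suppose $C$ is a cycle in $G_{\cV^{-\rho}}$. Decompose $C$ into maximal segments of reversed $\rho$-edges and segments $\sigma$ of old edges. Each $\sigma$ leaves $\rho$ at some vertex $u$ and re-enters it at some vertex $v$. Because $\rho$ is traversed backwards in $C$, there must be at least one such $\sigma$ with $u$ earlier on $\rho$ than $v$; otherwise following $C$ would move monotonically backwards along $\rho$ and could never close up. I would prove this claim by contradiction: if every $\sigma$ ended at or before its start, the net position along $\rho$ would strictly decrease around the cycle.

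Take such a $\sigma$. Then $y\pathh{}u$ along $\rho$, followed by $\sigma$, followed by $v\pathh{}x$ along $\rho$, is a path in $G_\cV$ from $y$ to $x$. This is a walk built from old edges. It cannot repeat vertices, since $G_\cV$ is acyclic, so it is a genuine path. It differs from $\rho$ because it uses an edge not in $\rho$; a length argument shows $\sigma$ cannot simply coincide with the $\rho$-segment from $u$ to $v$, since $\sigma$ consists of non-$\rho$ edges. This contradicts the uniqueness of $\rho$.

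The delicate part is the bookkeeping at the junctions where the direction of $\rho$ is entered and left, in particular the alternation between explicit and implicit arcs at those points. The rest of the argument is routine. Hence $\cV^{-\rho}$ is gradient, and together with Step 1 this proves the theorem.
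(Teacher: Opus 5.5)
Your proof is correct and complete. The paper does not prove this statement; it imports it from Forman \cite[Theorem~9.1]{Forman2002}, which is stated for classical cell complexes, and then applies it to Lefschetz complexes. Your route is therefore a self-contained proof rather than a citation.

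It is essentially Forman's argument, rewritten in the digraph $G_\cV$. That is exactly what it buys: Steps 1--3 use only the facet relation and the arc conventions of $G_\cV$. Hence they show the result holds verbatim for Lefschetz complexes, which the paper tacitly assumes. The splicing in Step 3 (a prefix of $\rho$, an off-$\rho$ segment, a suffix of $\rho$) is the same decomposition the paper uses, with its families $A$ and $B$, in the appendix proof of Theorem~\ref{thm:cancelation_is_path_reversing}.

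Several parts of your write-up can be simplified.
\begin{itemize}
\item You do not need the telescoping argument to find a forward segment. If a nonempty old segment $\sigma$ ran from $u$ to some $v$ at or before $u$ on $\rho$, then $\rho[v,u]\cdot\sigma$ would already be a closed walk in $G_\cV$. So every such $\sigma$ moves forward along $\rho$. A cycle made only of reversed edges is impossible, since positions strictly decrease along them.
\item The spliced walk differs from $\rho$ simply because it contains an edge of $\sigma$. By Step 2, such an edge is an arc of $G_\cV$ not on $\rho$, so no ``length argument'' is needed.
\item There is no delicate bookkeeping at the junctions. Step 2 already identifies exactly which arcs change.
\item The remark that cycles are confined to one level is harmless but unnecessary. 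Any cycle of $G_{\cV^{-\rho}}$ must contain a reversed edge, because $G_\cV$ is acyclic.

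\end{itemize}
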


\noindent We say that $\bds \in \upbd(\mf)$ is \emph{reversible} if there exists exactly one path between $\dth\bds$ and $\bth\bds$ in $\cV_{\mf}$. However, the theorem alone gives no guarantee that elimination of a~pair of critical cells will not affect the remaining pairs in $\upbd(h)$. Identifying those pairs that can be safely removed is therefore a~key challenge. 

\begin{definition}
Let $X$ be a~Lefschetz complex filtered by $\dmf \; \mf$. 
A pair $(\cx,\cy) \in X \times X$ such that $\hmB(\cx,\cy) = 1$ is a \emph{shallow pair} if $\mf(\cx)$ is the maximum among facets of $y$ and $\mf(\cy)$ is the minimum among cofacets of $\cx$.
\end{definition}

\noindent Observe that every shallow pair is a~birth-death pair. Shallow pairs were introduced as apparent pairs in \cite{Bauer2021} and as close pairs in~\cite{DeRoSh2015}. Since the theory behind them was later developed in the framework of the \emph{depth posets} \cite{TransDepth}, we adopt the name from that setting. Shallow pairs are closely related to an algebraic operation called \emph{Lefschetz cancelation}.

\begin{definition}\label{def:lefschetz-cancellation}
    Let $(s,t)\in X\times X$ be a~pair in a~Lefschetz complex such that $s$ is a~facet of $t$.
    A \emph{cancellation} of $(s,t)$ produces a~\emph{quotient}, another Lefschetz complex $(\hat{X},\hat{\dim},\hat{\mB})$ such that $\hat{X}=X\setminus \{s,t\}$, $\hat{\dim}$ is a~restriction of $\dim$ to $\hat{X}$ and $\hat{\mB}(x,y) = \hmB(x,y) + \hmB(s,y)\cdot\hmB(x,t)$.
\end{definition}

\noindent The boundary map in the quotient can be written in matrix form: if $\dim t = n$ and $\hmBaft_{n}$ is the $n$-th boundary matrix of $\hat{X}$,  then $\hmBaft_{n}[:,\cy] = \hmB_{n}[:,\cy] + \hmB_{n}[s,\cy] \cdot \hmB_{n}[:,t]$, after erasing row $s$ and column $t$. Throughout this paper, we often refer to small modifications of matrices based on their previous state. In such cases, any matrix $M$ after modification is denoted $\hat{M}$.

\begin{theorem}[{\cite[Theorem~3.2]{EdLiMrSo2024}}]\label{thm:cancelation}
   Let $X$ be filtered by a~$\dmf \; \mf$. Fix a~shallow pair $\bds$. Then birth-death pairs of quotient of $X$ after Lefschetz cancelation of $\bds$ are exactly $\bd(\mf)\setminus \{\bds\}$, and every shallow pair of $\mf$ distinct from $\bds$
    remains shallow in the quotient, which may in addition contain new shallow pairs not present in $X$.

\end{theorem}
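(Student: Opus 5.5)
The plan is to compare the reduced matrices of $X$ and of its quotient through the pairing-via-ranks characterization. Recall that for a boundary matrix ordered by the $\mf$-order, $(\cx,\cy)$ is a birth-death pair if and only if the pivot count $r(\cx,\cy)-r(\cx^+,\cy)-r(\cx,\cy^-)+r(\cx^+,\cy^-)$ equals $1$. Here $r(i,j)$ is the rank of the lower-left submatrix with rows $\geq i$ and columns $\leq j$, and $\cx^+$, $\cy^-$ denote the successor and predecessor in the order. This rank characterization is independent of the particular reduction and is invariant under adding a column to a later column, or a row to an earlier row. I will show that, up to such operations and the deletion of a row or column that carries no rank information, $\hmBaft$ is obtained from $\hmB$.

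Write $\bds=(s,t)$ with $\dim t=n$, $h(s)$ maximal among the facets of $t$, and $h(t)$ minimal among the cofacets of $s$.

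\textbf{Step 1 (the matrix $\hmB_n$).} Every column $\cy\neq t$ with $\hmB[s,\cy]=1$ satisfies $\cy>_\mf t$, by minimality of $t$. Adding column $t$ to each such column is therefore a left-to-right operation that preserves all pivots. The resulting matrix $\hmB'_n$ has row $s$ equal to the unit vector at column $t$. Deleting row $s$ and column $t$ from $\hmB'_n$ gives exactly $\hmBaft_n$, since $\hmBaft[\cx,\cy]=\hmB[\cx,\cy]+\hmB[s,\cy]\hmB[\cx,t]$. Because $s$ is the lowest entry of column $t$ (by maximality of $h(s)$) and row $s$ has no other entry, the pivot $(s,t)$ splits off as a direct summand in the rank counts. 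Concretely, $r(i,j)$ for $\hmB'_n$ equals $r(i,j)$ for $\hmBaft_n$ plus $[\,i\le s,\ t\le j\,]$, and this correction cancels in every pivot count not involving $s$ or $t$. Hence the pairs of $\hmBaft_n$ are exactly those of $\hmB_n$ minus $(s,t)$.

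\textbf{Step 2 (the matrix $\hmB_{n+1}$).} Since $\dim s=n-1$, the quotient formula changes nothing here except that row $t$ is deleted. To control this deletion, I use $\hmB_n\hmB_{n+1}=0$ read at row $s$. It gives
\[
\hmB_{n+1}[t,:]=\sum_{\cy\neq t,\ \hmB(s,\cy)=1}\hmB_{n+1}[\cy,:],
\]
and all these $\cy$ come after $t$, i.e.\ they are lower rows. So row $t$ is a combination of lower rows. Adding these rows to row $t$ is an operation of adding later rows to an earlier one, which preserves the lower-left ranks and makes row $t$ zero. A zero row contributes nothing, so deleting it changes no pivot. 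The remaining dimensions are untouched, and the order on $\hat X$ is the restricted $\mf$-order, so $\bd$ of the quotient is $\bd(\mf)\setminus\{\bds\}$. I expect this step to be the main obstacle: a naive lazy-reduction argument fails here, because row $t$ may be a transient low during the reduction. The identity $\hmB\hmB=0$ is what makes the rank argument go through.

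\textbf{Step 3 (shallow pairs).} Let $(\cx,\cy)\neq(s,t)$ be shallow. Shallow pairs are birth-death pairs, and birth-death pairs are disjoint, so $\{\cx,\cy\}\cap\{s,t\}=\emptyset$. The new facets of $\cy$ in the quotient are facets $\cz$ of $t$, and they occur only when $\hmB(s,\cy)=1$. In that case $s$ is a facet of $\cy$, so $h(s)<h(\cx)$, and each such $\cz$ has $h(\cz)\le h(s)<h(\cx)$. Moreover $\hmBaft(\cx,\cy)=\hmB(\cx,\cy)+\hmB(s,\cy)\hmB(\cx,t)$. If both terms were $1$, then $\cx$ would be a facet of $t$, giving $h(\cx)\le h(s)$, a contradiction. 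Hence $\cx$ remains a facet of $\cy$ and remains the maximal one. The dual argument, using the minimality of $h(t)$, shows that $\cy$ remains the minimal cofacet of $\cx$. New shallow pairs may of course appear, which is all the statement allows.
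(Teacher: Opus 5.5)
The paper does not prove this statement; it is quoted from \cite{EdLiMrSo2024} and then used as an input, e.g.\ in Case~2 of the appendix proof of Theorem~\ref{thm:lefschetz_is_safe}. So your argument has to stand on its own, and it essentially does. The reduction-independent pivot-count characterization is the right tool. Step~1 (clearing row $s$ by left-to-right column additions and splitting off the pivot $(s,t)$) is correct. So is Step~2 (zeroing row $t$ of $D_{n+1}$ via $D_nD_{n+1}=0$ and additions of lower rows). So is Step~3, including the check that $\hat{D}(x,y)$ cannot drop to $0$.

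There is one omitted case: the sentence ``the remaining dimensions are untouched'' is false for $D_{n-1}$.
\begin{itemize}
\item Since $\dim s=n-1$, the quotient deletes column $s$ from $D_{n-1}$. The correction term vanishes there, because $D(s,y)=0$ whenever $\dim y=n-1$.
\item $D_{n-1}$ determines the $(n-2)$-dimensional pairs, so you must show this deletion moves no pivot.
\item The fix is the exact dual of your Step~2. Reading $D_{n-1}D_n=0$ at column $t$ gives
\[
D_{n-1}[:,s]=\sum_{z\neq s,\ D(z,t)=1} D_{n-1}[:,z].
\]
Every such $z$ precedes $s$ by the maximality of $h(s)$. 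Adding these earlier columns to column $s$ makes it zero without changing any lower-left rank, and a zero column carries no pivot.
\end{itemize}
With this added, pivots are preserved in every dimension. That also shows the unpaired cells of $\hat X$ are exactly those of $X$.

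Two minor remarks. First, it is worth one line to check that $h|_{\hat X}$ is still a dMf, so that shallowness in the quotient is meaningful. New incidences $\hat D(z,y)=1$ satisfy $h(z)<h(s)\le h(y)$, and by the same inequality no vector is destroyed. Second, your worry about a transient low at row $t$ is unfounded. Row $t$ is never a pivot row of $D_{n+1}$, so a column reaching low $t$ during lazy reduction would have to stop there, which cannot happen. Hence no column ever has low $t$, and deleting row $t$ does not affect the reduction at all. Your rank argument is fine regardless.
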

\noindent In other words, performing a~Lefschetz cancellation on a~shallow pair does not change the pairing between the rest of the cells. It is convenient to characterize shallow pairs in terms of the relations between cells.

\begin{observation}\label{obs:shallow_means_relationless}
    An~$n$-dimensional birth-death pair $\bds$ is shallow if and only if $\hmU_{n+1}[:,\dth\bds]$ and $\cmU_{n}[:,\bth\bds]$ are zero except $\hmU_{n+1}[\dth\bds,\dth\bds]$ and $\cmU_{n}[\bth\bds,\bth\bds]$. Equivalently, $\bds$ is shallow iff $\bdt \norel \bds$ for any birth-death pair $\bdt$.
 
\end{observation}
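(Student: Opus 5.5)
We need to prove Observation \ref{obs:shallow_means_relationless}: $\bds$ is shallow iff the only nonzero entries of the columns $\hmU_{n+1}[:,\dth\bds]$ and $\cmU_{n}[:,\bth\bds]$ are the diagonal ones, equivalently iff no pair is related to $\bds$.

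\textbf{Forward direction.} Assume $\bds$ is shallow, and consider the column $\dth\bds$ of $\hmB_{n+1}$. Since $\mf(\bth\bds)$ is maximal among the facets of $\dth\bds$, the entry $\bth\bds$ is already the lowest nonzero entry of $\hmB_{n+1}[:,\dth\bds]$. Suppose some preceding column $x$ had the same low $\bth\bds$ at the moment column $\dth\bds$ is processed. Then $\bth\bds$ would be a facet of the original column of some cell processed earlier, i.e.\ $\hmB(\bth\bds,z)=1$ for some $z <_\mf \dth\bds$. This holds because every column reachable by additions has its low among rows present in earlier original columns: each reduced column is a sum of original columns of cells $\le_\mf x$, and the lows of reduced columns are distinct. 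I would make this precise by induction on the lazy algorithm, showing that the low of a reduced column always comes from some original column of a cell $\le_\mf x$.

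Such a $z$ would be a cofacet of $\bth\bds$ with $\mf(z)\le\mf(\dth\bds)$. Ties here are handled by the pairing condition of Definition~\ref{def:dmf}: equal values can only occur within a facet--cofacet pair. This contradicts the minimality of $\mf(\dth\bds)$ among cofacets. Hence no column is added to column $\dth\bds$. Since $\hmU_{n+1}[\cx,\dth\bds]\ne0$ for $\cx\neq\dth\bds$ means exactly that column $\cx$ was added to column $\dth\bds$, the column $\hmU_{n+1}[:,\dth\bds]$ is a unit vector.

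The dual argument runs on $\cmB_n$ with the reversed order: $\dth\bds$ is the lowest entry of column $\bth\bds$ by minimality among cofacets. An earlier column sharing that low would require $\dth\bds$ to have a facet $z$ with $\mf(z)\ge\mf(\bth\bds)$ and $z$ coming earlier in the reversed order, contradicting maximality. So $\cmU_n[:,\bth\bds]$ is also a unit vector.

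\textbf{Converse.} Assume both columns are unit vectors. Then no additions were made to column $\dth\bds$, so $\low(\hmB_{n+1}[:,\dth\bds])=\bth\bds$, and $\bth\bds$ is the $\mf$-maximal facet of $\dth\bds$. Dually, $\dth\bds$ is the $\mf$-minimal cofacet of $\bth\bds$. This is exactly shallowness.

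\textbf{Equivalence with relations.} A relation $\bdt\rel\bds$ means that $\dth\bdt$ (respectively $\bth\bdt$) has a nonzero entry in $\hmU$ (respectively $\cmU$) in row $\dth\bdt$ (respectively $\bth\bdt$) and in a column indexed by a component of $\bds$. Column $\bth\bds$ of $\hmU_n$ and column $\dth\bds$ of $\cmU_{n+1}$ are automatically unit vectors: a column whose original boundary is already reduced, or which is a birth column, receives no additions under clearing. I would check this via the identity $\hmB\hmB=0$, or simply by noting that these columns are reduced to zero. Hence the absence of relations pointing at $\bds$ is equivalent to the two column conditions above.

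\textbf{Main obstacle.} The key step is the invariant on lows of reduced columns. I expect to cite the standard fact that the low of the reduced column of $\dth\bds$ never exceeds its original low, together with the uniqueness of pivots.
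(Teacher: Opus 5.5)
Your proof of the first equivalence is correct. Since $\hmR_{n+1}[:,x]$ is a sum of original columns of cells $z\le_{\mf}x$, a preceding pivot at $\bth\bds$ would give a cofacet of $\bth\bds$ strictly $\mf$-below $\dth\bds$; ties are excluded because both cells are cofacets of the same cell. The dual argument works for $\cmB_n$, and the converse reads shallowness off the unreduced lows using the duality of pairings. The paper states this observation without proof, and yours is the natural argument.

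\textbf{The gap is in the passage from columns to relations.} You assert that column $\bth\bds$ of $\hmU_n$ and column $\dth\bds$ of $\cmU_{n+1}$ are automatically unit vectors, because birth columns receive no additions ``under clearing''. Algorithm~\ref{alg:lazy-reduction} performs no clearing. A birth column goes through the while loop and is typically reduced to zero precisely by additions, all of which are recorded in $\hmU_n$. Neither $\hmB\hmB=0$ nor the fact that the column ends at zero prevents this.

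A concrete counterexample: take the boundary of a triangle with vertices $a,b,c$, edges $e_1=ab$, $e_2=bc$, $e_3=ac$, and face $T$, with $\mf$ increasing along $a,b,c,e_1,e_2,e_3,T$. Column $e_3$ is zeroed by adding $e_2$ and then $e_1$, so $\hmU_1[e_1,e_3]=1$, although $(e_3,T)$ is shallow. Under the paper's literal definition ($\dth\bdt$ related to \emph{any} component of $\bds$), this gives $(b,e_1)\homo(e_3,T)$. So if $\bdt$ may range over all dimensions, the ``equivalently'' clause is false and your step cannot be rescued.

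The clause has to be read with $\bdt\in\bd_n(\mf)$, which is how the paper uses it: see Observation~\ref{obs:left_up_corner}, and the proof of Proposition~\ref{prop:we_can_only_lose}, where incoming arrows depend only on $\hmU[:,\dth\bds]$. With that reading, the correct argument is a dimension count. For $\bdt\in\bd_n(\mf)$, $\bdt\homo\bds$ iff $\hmU_{n+1}[\dth\bdt,\dth\bds]=1$, and $\bdt\cohomo\bds$ iff $\cmU_n[\bth\bdt,\bth\bds]=1$. Conversely, every off-diagonal $1$ in these two columns lies in a row whose column was actually added during the reduction. That column therefore had a nonzero reduced column with a low, so it is the death (resp.\ birth) cell of a pair in $\bd_n(\mf)$. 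The cross-dimensional columns never enter the argument.
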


\noindent It is important to note that a~Lefschetz cancellation leaves intact not only the pairing, but also the relations between cells.

\begin{restatable}{theorem}{LefschetzIsSafe}\label{thm:lefschetz_is_safe}
    Let $\hmB_{n}$ be a~boundary matrix,  
        and $\hmBaft_{n}$, a~boundary matrix of the quotient after cancellation of the~$(n-1)$-th dimensional shallow pair $\bds$.  Let $R_{n}\hmU_{n}$ and $\hat{R}_{n}\hmUaft_{n}$ be their respective decompositions obtained via the lazy reduction. Then, $\hmU_{n}[x,y] = \hmUaft_{n}[x,y]$ for all $x,y$ different than $\dth\bds$. Moreover, symmetrically $\cmU_{n-1}[x,y] = \cmUaft_{n-1}[x,y]$ for all $x,y$ different than $\bth\bds$.
\end{restatable}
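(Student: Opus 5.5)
We prove the statement by comparing the lazy reductions of $\hmB_{n}$ and $\hmBaft_{n}$ column by column. Write $s=\bth\bds$ and $t=\dth\bds$; here $\dim t = n$ and $\dim s=n-1$. By Definition~\ref{def:lefschetz-cancellation}, $\hmBaft_{n}[:,y]=\hmB_{n}[:,y]+\hmB_{n}[s,y]\,\hmB_{n}[:,t]$, with row $s$ and column $t$ erased. Since $\bds$ is shallow, $s$ is the $\mf$-maximal facet of $t$, so $\low(\hmB_{n}[:,t])=s$. The column $t$ is therefore already reduced, and its pivot $s$ is never touched by any other column. Hence $t$ is never added to, and by Observation~\ref{obs:shallow_means_relationless} it is never added to another column, except in the way we now control.

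The plan is to show by induction over columns $y\neq t$, in $\mf$-order, the following invariant: at every stage of the reduction, the working column $\hat R[:,y]$ equals $R[:,y]$ with row $s$ deleted and possibly a multiple of $\hmB_{n}[:,t]$ added, and the same lows occur in both.

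The key points are as follows.
\begin{itemize}
\item Adding $\hmB_{n}[:,t]$ never changes a low except when that low is $s$. Any other entry of $\hmB_{n}[:,t]$ lies strictly below $s$ in $\mf$-order, while a column with low $s$ is precisely one where the entry in row $s$ is nonzero.
\item In the original reduction, a column whose current low is $s$ gets $t$ added to it. That would record $t\homo y$, i.e.\ $U_n[t,y]=1$, which Observation~\ref{obs:shallow_means_relationless} forbids for a shallow pair. So for columns $y\neq t$ the low never becomes $s$ in the original reduction. Correspondingly, in the quotient, erasing row $s$ and adding $\hmB_{n}[s,y]\hmB_{n}[:,t]$ exactly corrects for this.
\item I would make this precise by comparing lows: the low of $\hat R[:,y]$ equals the low of $R[:,y]$ whenever the latter differs from $s$. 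Consequently, at each step the same preceding column $x\neq t$ is chosen in both reductions, and the same row operation $U[x,:]\gets U[x,:]+U[y,:]$ is performed.
\end{itemize}
Since the sequences of column additions among columns $\neq t$ coincide, the entries of $U_n$ and $\hmUaft_n$ agree off row and column $t$.

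The main obstacle is the second bullet: making sure that, throughout the original reduction, column $t$ is never selected for a column $y\neq t$. A column $y$ may have $s$ as an entry without having $s$ as its low. The subtle case is when the low of $R[:,y]$ would be $s$ at some intermediate stage. There I will use the shallowness condition ``$\mf(t)$ is minimal among cofacets of $s$'': every column $y$ with $\hmB_{n}[s,y]=1$ satisfies $y>_{\mf}t$. I will also use the fact that $t$ is the unique column with pivot $s$. Any addition of $t$ would create a nonzero entry in column $t$ of $U_n$, which contradicts Observation~\ref{obs:shallow_means_relationless}. This gives the contradiction needed to close the induction.

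The cohomological statement for $\cmU_{n-1}$ follows by the symmetric argument applied to $\cmB$, with reversed order, in which $s$ and $t$ exchange roles. Alternatively, one can invoke the duality between boundary and coboundary lazy reductions from~\cite{dualities}.
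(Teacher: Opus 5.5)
Your plan of running the two lazy reductions side by side and matching lows is the right framework, and it is essentially what the paper does. However, the step you call the main obstacle is settled by a false claim.

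Observation~\ref{obs:shallow_means_relationless} says that the \emph{column} $U_n[:,t]$ vanishes off the diagonal: nothing is added \emph{to} $t$, and no pair is related to $\alpha$. It says nothing about the \emph{row} $U_n[t,:]$. Adding column $t$ into a column $y$ toggles $U_n[t,y]$, which is an entry of row $t$ (an outgoing relation $t\homo y$), and shallow pairs have such relations routinely.

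For instance, take the boundary of a triangle with $h(a)<h(b)<h(c)<h(ab)<h(ac)<h(bc)$. The pair $(b,ab)$ is shallow. Yet reducing column $bc$ first adds $ac$, which makes the low equal to $b=s$, and then adds $ab=t$, so $U_1[ab,bc]=1$. Hence ``for $y\neq t$ the low never becomes $s$'' is false, and ``any addition of $t$ would create a nonzero entry in column $t$ of $U_n$'' swaps row and column. As written, your induction would prove that both reductions perform literally the same additions, which fails in this example.

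What is missing is exactly the paper's Case~1: you have to let the original reduction add $t$ and show that this is the \emph{only} discrepancy.
\begin{itemize}
\item Because $t$ is the minimal cofacet of $s$, no column preceding $t$ meets row $s$. So $R_n[:,t]=D_n[:,t]$, and $t$ is the unique reduced column with low $s$.
\item Because lows strictly decrease while a column is reduced, $t$ is added to $y$ at most once, precisely when the working low equals $s$.
\item Let $\phi(v)=\pi\bigl(v+v[s]\,D_n[:,t]\bigr)$, where $\pi$ deletes row $s$. Then $\phi(D_n[:,y])=\hat D_n[:,y]$ and $\phi(D_n[:,t])=0$. Moreover, $\low\phi(v)=\low v$ if $\low v\neq s$, and $\low\phi(v)=\low\bigl(v+D_n[:,t]\bigr)$ if $\low v=s$.
\item Use the invariant $\hat w=\phi(w)$ on working columns, together with $\hat R_n[:,x]=\phi(R_n[:,x])$ for $x\neq t$ by induction. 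This shows that the additions performed on $y$ in the quotient are those of the original with $t$ removed.
\end{itemize}
So $U_n$ and $\hat U_n$ can differ only in the entry $U_n[t,y]$, which is exactly why the statement excludes $x=t$.

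For the coboundary half, use the symmetric argument rather than the alternative you mention. The duality of~\cite{dualities} transfers the pairing, but not the entries of $U^{\perp}$, which the paper stresses is not a transpose of $U$.
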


\proofappendix

\noindent The above theorem can be rephrased as follows.

\begin{observation}\label{obs:quotient_relation}
    Let $X$ be filtered by a~$\dmf$ and $\bds$ be a~shallow birth-death pair. If $\bdt \rel \bdb$ in $X$, then the same relation holds in the quotient $\hat{X}$ obtained after the cancellation of $\bds$, for all $\bdt \neq \bds$. 
\end{observation}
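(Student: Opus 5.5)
The plan is to reduce the statement to \cref{thm:lefschetz_is_safe}, treating each dimension separately. Relations are read off entries of the matrices $\hmU_m$ and $\cmU_m$. Let $\bds$ be an $(n-1)$-dimensional shallow pair with $s=\bth\bds$ and $t=\dth\bds$.

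First, unpack the relation. By definition, $\bdt \homo \bdb$ means $\hmU_m[\dth\bdt, c]=1$ for some component $c$ of $\bdb$, and $\bdt \cohomo \bdb$ means $\cmU_m[\bth\bdt,c]=1$. Since $\bdt\neq\bds$, the cells $\dth\bdt$ and $\bth\bdt$ are different from $s$ and $t$. The relation also cannot involve $\bds$ as its target: $\bds$ is shallow, so by \cref{obs:shallow_means_relationless} no pair is related to $\bds$. Hence $\bdb\neq\bds$ as well. So every entry encoding the relation has both indices outside $\{s,t\}$, and all such cells survive in $\hat X$.

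Next, handle the dimensions touched by the cancellation. The relevant matrices are $\hmU_n$ and $\cmU_{n-1}$. For these, \cref{thm:lefschetz_is_safe} directly gives $\hmUaft_n[x,y]=\hmU_n[x,y]$ for $x,y\neq t$ and $\cmUaft_{n-1}[x,y]=\cmU_{n-1}[x,y]$ for $x,y\neq s$. This settles relations between pairs of dimension $n-1$.

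For the remaining dimensions, I check that the lazy reduction performs exactly the same column additions as before.
\begin{itemize}
\item Consider $\hmB_{n+1}$. By the formula $\hat\mB(x,y)=\mB(x,y)+\mB(s,y)\mB(x,t)$ with $\dim y=n+1$, we get $\mB(s,y)=0$ for dimensional reasons. So $\hmBaft_{n+1}$ is $\hmB_{n+1}$ with row $t$ deleted.
\item The key claim is that row $t$ is never the low of any column at any stage of Algorithm~\ref{alg:lazy-reduction} on $\hmB_{n+1}$. Suppose a column currently has low $t$. No preceding column can end with low $t$, because $t$ is a death cell and therefore not a birth cell of any $n$-dimensional pair. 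So no addition fires, the reduction of that column stops with low $t$, and $t$ would then be a birth cell, a contradiction.
\item Consequently, deleting row $t$ changes no comparison of lows. The same additions occur, and $\hmUaft_{n+1}$ is $\hmU_{n+1}$ itself.
\item Matrices in dimensions further away are literally unchanged.
\item Dually, $\cmB_{n-2}$ loses only row $s$. Row $s$ is never a low in the coboundary reduction, since $s$ is a birth cell and not a death cell. Hence $\cmUaft_{n-2}=\cmU_{n-2}$.
\end{itemize}

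I expect the main obstacle to be the statement that deleting row $t$ (respectively $s$) does not alter the sequence of additions. The argument above, that such a row can never be an intermediate low, is what I would make rigorous first; everything else is bookkeeping of indices.
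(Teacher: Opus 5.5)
Your core step is the paper's own: the paper gives no separate argument and presents this observation simply as a rephrasing of \cref{thm:lefschetz_is_safe}. That theorem on its own only controls $\hmU_n$ and $\cmU_{n-1}$. Your extra argument for $\hmU_{n+1}$ and $\cmU_{n-2}$ is correct (row $t$, resp.\ row $s$, can never be an intermediate low, so deleting it changes no comparison of lows), and it fills in something the paper leaves implicit.

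However, your list of affected matrices is not exhaustive. Deleting $s$ and $t$ also removes column $s$ from $\hmB_{n-1}$ and column $t$ from $\cmB_n$. So $\hmU_{n-1}$ and $\cmU_n$ are neither ``literally unchanged'' nor covered by your row-deletion argument. They carry, for instance, relations $\dth\bdt\homo\dth\bdb$ between $(n-2)$-dimensional pairs and $\bth\bdt\cohomo\bth\bdb$ between $n$-dimensional pairs. The missing step is short:
\begin{itemize}
\item Since $s$ is a birth cell, $\hmR_{n-1}[:,s]$ ends up zero, so it never serves as a pivot for later columns.
\item While column $s$ is being reduced, row $s$ of $\hmU_{n-1}$ is still the unit row, so those additions only toggle entries of column $s$.
\item Hence every other column undergoes the same additions, and $\hmUaft_{n-1}$ is $\hmU_{n-1}$ with row and column $s$ deleted.
\item Dually, $t$ is a death cell, so $\cmR_n[:,t]=0$, and $\cmUaft_n$ is $\cmU_n$ with row and column $t$ deleted.
\end{itemize}

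Treating these two matrices also shows that your sentence ``by \cref{obs:shallow_means_relationless} no pair is related to $\bds$'' is too strong. Shallowness only makes $\hmU_n[:,t]$ and $\cmU_{n-1}[:,s]$ trivial, i.e.\ it rules out arrows into $\bds$ from pairs of its own dimension. Column $s$ of $\hmU_{n-1}$ and column $t$ of $\cmU_n$ may still contain off-diagonal ones. For example, take the filtration $a<b<c<ab<bc<ac<abc$ of a triangle. Here $(ac,abc)$ is shallow, yet reducing column $ac$ of $\hmB_1$ adds the columns $bc$ and $ab$, so $(b,ab)\homo(ac,abc)$ in the paper's sense. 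Such relations necessarily disappear in $\hat{X}$. So $\bdb\neq\bds$ has to be read as part of the hypothesis (as the statement evidently intends), not derived from shallowness. With that reading and the two extra cases added, your proof is complete, and more thorough than the paper's one-line justification.
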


\noindent Due to Observation~\ref{obs:shallow_means_relationless} above, we introduce \emph{critical shallow pairs}. An off-diagonal birth-death pair $\bds$ is a critical shallow pair if there does not exist an off-diagonal birth-death pair $\bdt$ such that $\bdt \rel \bds$. It is easy to see that critical shallow pairs are exactly the shallow pairs of the Morse complex, although they need not be shallow pairs in the original complex. Equivalently, a pair $\bds$ is critically shallow if for every $\bdt \rel \bds$, the pair $\bdt$ is a vector in $\cV_{\mf}$.
\begin{figure}[ht]
    \centering
    \includegraphics[width=0.7\linewidth]{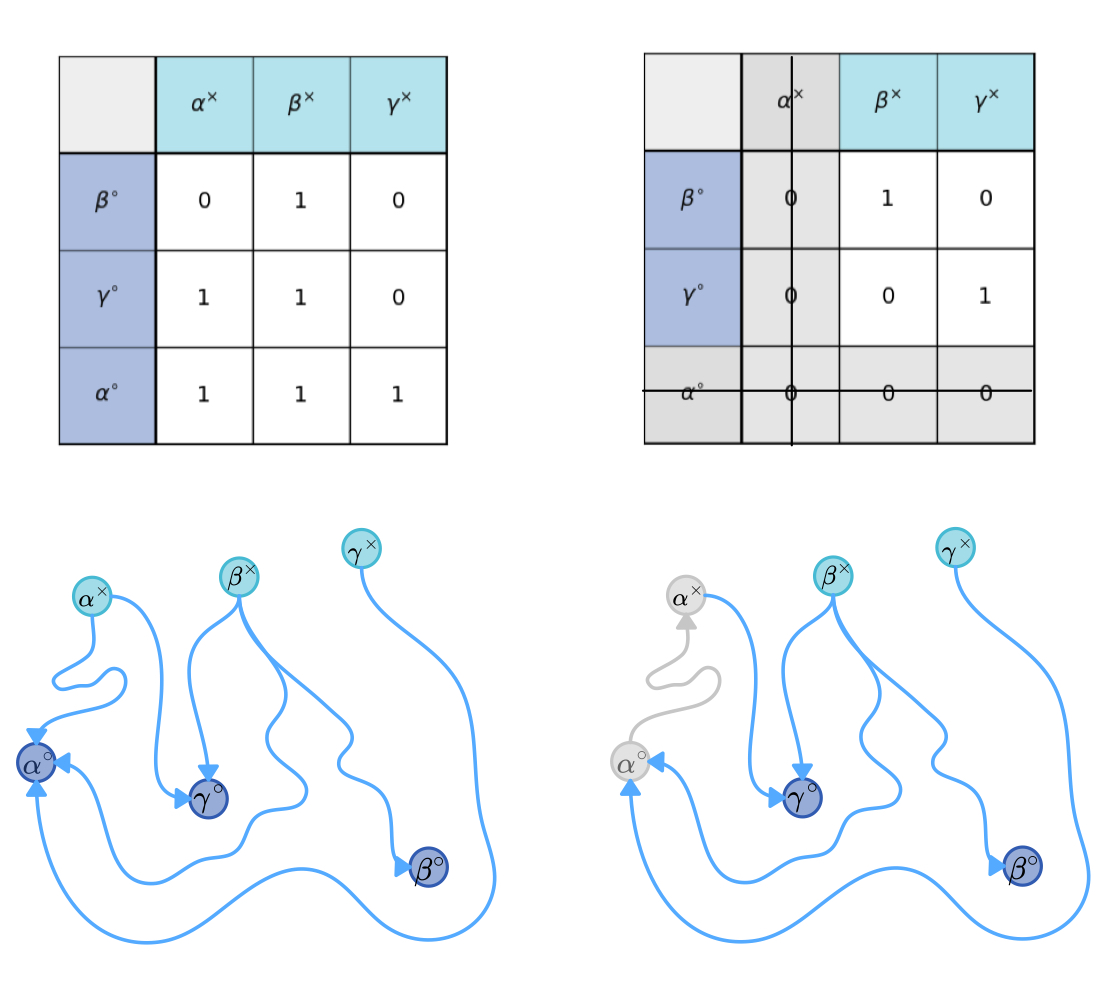}
    \caption{Two vector fields differing by a reversal of the path between components of a birth-death pair $\bds$.
    Critical cells are shown with colored nodes, and arrows between them symbolize paths created by vectors. Above each vector field is the boundary matrix of the corresponding Morse complex.
    Reversing the path between components of $\bds$ gives the same boundary matrix as performing the Lefschetz cancelation.}
    \label{fig:reversing_path}
\end{figure}

\begin{restatable}{theorem}{InversingPathMorseComplex}\label{thm:cancelation_is_path_reversing}
    Let $\cV$ be a~combinatorial vector field on $X$ and let $s, t \in \crit(\cV)$
    be such that $\dim s + 1 = \dim t$.
    Assume that there exists a~unique path $\rho$ from $t$ to $s$. Then $\cM(\cV^{-\rho})$ is isomorphic to the quotient of $\cM(\cV)$ after cancelling the pair $(s, t)$.
    (See example in Figure~\ref{fig:reversing_path}.) 
\end{restatable}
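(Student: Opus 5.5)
We need to prove that $\cM(\cV^{-\rho})$ is isomorphic to the quotient of $\cM(\cV)$ after cancelling $(s,t)$. By Theorem~\ref{thm:reversing}, $\crit(\cV^{-\rho})=\crit(\cV)\setminus\{s,t\}$, which is exactly the cell set of the quotient. The dimensions agree as well, since both are restrictions of $\dim$. So the identity map on cells is a candidate isomorphism, and it remains to compare boundary coefficients. For critical cells $x,y$ of $\cV^{-\rho}$ with $\dim y=\dim x+1$, we must show
\[
\#\{\text{paths } y\to x \text{ in } G_{\cV^{-\rho}}\} \equiv \#\{\text{paths } y\to x \text{ in } G_{\cV}\} + D_\cM(s,y)\, D_\cM(x,t) \pmod 2 .
\]
The uniqueness of $\rho$ gives $D_\cM(s,t)=1$. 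Hence the right-hand side is precisely the quotient formula of Definition~\ref{def:lefschetz-cancellation}.

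Only paths of dimension $k$ matter, where $k=\dim x$. If $k\neq\dim s$, then reversal does not touch $\cV^k$ and both graphs agree on the relevant cells. Also $D_\cM(s,y)D_\cM(x,t)=0$ for dimensional reasons, so the claim is immediate in that case. Assume now $k=\dim s$. Write $\rho = t=c_0,c_1,\dots,c_{2m}=s$. Its explicit arcs are $(c_{2i-1},c_{2i})$ and its implicit arcs are $(c_{2i},c_{2i+1})$. Reversal makes the former pairs $(c_{2i},c_{2i+1})$ vectors, together with $(c_0,c_1)$ and $(c_{2m-1},c_{2m})$ at the ends. In $G_{\cV^{-\rho}}$, every arc of $\rho$ is therefore reversed, and all other arcs are unchanged.

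Next I classify paths $\pi$ from $y$ to $x$ in $G_{\cV^{-\rho}}$. If $\pi$ avoids the reversed arcs, it is also a path in $G_\cV$ avoiding $\rho$'s arcs, and conversely. Suppose instead $\pi$ uses some reversed arcs. The key structural claim is that $\pi$ then enters $\rho$ once, runs backwards along a contiguous segment $c_j\to c_{j-1}\to\dots\to c_i$, and leaves. It cannot re-enter $\rho$ later: a later entry would combine with original arcs to produce a second path from $t$ to $s$, or a cycle, in $G_\cV$, contradicting uniqueness of $\rho$ or acyclicity. This is the same argument as in Forman's proof of Theorem~\ref{thm:reversing}.

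I then build a parity-preserving correspondence. A $\cV^{-\rho}$-path $y\leadsto c_j$, followed by the backward segment and then $c_i\leadsto x$, corresponds to a pair of $\cV$-paths. The first is $y\leadsto c_j$ extended forward along $\rho$ to $s$. The second is $t$ along $\rho$ to $c_i$, extended by $c_i\leadsto x$. Conversely, take a pair consisting of a path $y\to s$ and a path $t\to x$ in $G_\cV$. Let $c_j$ be the first vertex where the $y\to s$ path meets $\rho$, and let $c_i$ be the last vertex where the $t\to x$ path leaves $\rho$. Uniqueness of $\rho$ forces the former path to follow $\rho$ after $c_j$ and the latter to follow $\rho$ up to $c_i$.

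This correspondence has two exceptional cases. When $i\ge j$, the pair gives no valid $\cV^{-\rho}$-path. In that case the two $\cV$-paths glue into a $\cV$-path $y\to x$ that uses arcs of $\rho$. Counting modulo 2, the number of $\cV$-paths $y\to x$ that use $\rho$ equals the number of such glued pairs with $i\ge j$. Combining the two counts yields the displayed congruence. The same matching also handles the degenerate cases where $y$ or $x$ lies adjacent to $\rho$.

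The main obstacle is this bijective bookkeeping. In particular, I expect difficulty in verifying that the decomposition of a $\cV$-path through $\rho$ into an entry point and an exit point is unique. I also need to check that the two sets of paths cancel exactly, using acyclicity and uniqueness of $\rho$. I would first try to sidestep this by a chain-homotopy argument: view the Morse boundary as $\partial_\cM = \pi\circ\partial\circ\Phi^\infty$ with the flow $\Phi$, and compare flows. If that becomes messy, I fall back on the explicit path matching.
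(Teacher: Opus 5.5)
Your argument is correct and is essentially the paper's own proof. The paper likewise separates the paths that avoid $\rho$ (common to both fields) from those of the form $\alpha\cdot\rho[\pend\alpha,\pbeg\beta]\cdot\beta$. It then obtains $\#\pathhs_{\cV}(y,x)+\#\pathhs_{\cV^{-\rho}}(y,x)=2\#E+\#\pathhs_{\cV}(y,s)\cdot\#\pathhs_{\cV}(t,x)$, with $E$ the common paths, from the fact that each entry/exit pair on $\rho$ is traversed forward in exactly one of $\cV,\cV^{-\rho}$; this is exactly your split of pairs of paths $y\to s$, $t\to x$ into $i<j$ versus $i>j$.

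There is one indexing slip: since $t$ and $s$ differ in dimension, $\rho=(c_0,\dots,c_{2m+1})$ with $s=c_{2m+1}$, and the new vectors are precisely $\{c_{2i},c_{2i+1}\}$ for $0\le i\le m$. The chain-homotopy detour is also unnecessary, because your no-re-entry argument (a second $t$--$s$ path or a cycle) already gives uniqueness of the entry/exit decomposition of $\cV$-paths.
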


\proofappendix

\noindent So to find a~topological simplification of $\mf$, one can find a~critical shallow pair $\bds$ that is reversible. Then, one has to invert the unique path $\rho$ between $\dth\bds$ and $\bth\bds$, and find $\omf$ with the property that $\cV^{-\rho} = \cV_{\omf}$ and $
\mf_{|\crit(\cV^{-\rho})} = \omf_{|\crit(\cV^{-\rho})}$. Unfortunately, it may happen that there is no pair that is both shallow and reversible. One of the goals of this paper is to remedy this problem.

\section{Homology and cohomology relations in the filter}\label{sec:homo_and_cohomo}
    To understand how birth-death pairs and the relationships between them change during changes of the $\dmf$, one must study how they change upon transposition of two adjacent cells in the boundary matrix. This problem is well-studied; see \cite{Vineyards} and \cite{TransDepth}. Observing that the depth poset can be constructed from the union of homological and cohomological relations between birth-death pairs (see Theorem 4.8 in \cite{EdLiMrSo2024}), we reformulate the results from \cite{TransDepth} in the language of this paper. First, we introduce two additional objects.




\begin{lemma}\cite[Lemma 3.2]{TransDepth}\label{lem:order_not_important}
   Fix a birth-death pair $\bds \in \bd(\mf)$.
    If we remove all birth-death pairs below and to the right of $\bds$---in the region $(\mf(\bth\bds),+\infty]\times[-\infty,\mf(\dth\bds)]$---by iteratively canceling shallow pairs, we get the same boundary matrix regardless of the order of cancellations.
\end{lemma}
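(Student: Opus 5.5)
The plan is to prove the statement by induction on the number of pairs in the region $(\mf(\bth\bds),+\infty]\times[-\infty,\mf(\dth\bds)]$, combined with a diamond-type confluence argument in the style of Newman's lemma. Call this region $Q$. The key tool is Theorem~\ref{thm:cancelation}: after cancelling a shallow pair, the birth-death pairs of the quotient are exactly the remaining pairs, so the multiset of pairs still lying in $Q$ shrinks by exactly one. Any maximal sequence of shallow cancellations restricted to $Q$ therefore has the same length and removes exactly the pairs of $Q$.

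First I have to check that such a sequence always exists. Take any pair $\bdt$ in $Q$ that is minimal with respect to the relations among pairs in $Q$. By Observation~\ref{obs:left_up_corner}, any $\bdb\rel\bdt$ has $\mf(\bth\bdb)>\mf(\bth\bdt)>\mf(\bth\bds)$ and $\mf(\dth\bdb)<\mf(\dth\bdt)<\mf(\dth\bds)$. Hence $\bdb$ also lies in $Q$. So a pair of $Q$ that is not the target of any relation from $Q$ is not the target of any relation at all, and by Observation~\ref{obs:shallow_means_relationless} it is shallow. The relation is acyclic, again by the strict inequalities, so such a minimal pair exists.

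For independence of order, I argue by strong induction on $|Q|$ that every maximal sequence yields the same matrix. Suppose two sequences start with different shallow pairs $\bdt_1\neq\bdt_2$, both in $Q$. By Theorem~\ref{thm:cancelation}, $\bdt_2$ remains shallow after cancelling $\bdt_1$, and vice versa. I then need the local diamond identity: cancelling $\bdt_1$ then $\bdt_2$ gives the same quotient matrix as cancelling $\bdt_2$ then $\bdt_1$. Given that identity, each branch's remaining region is smaller, the induction hypothesis applies to the common quotient after both cancellations, and the two full sequences agree.

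The main obstacle is the diamond identity. I would verify it directly from Definition~\ref{def:lefschetz-cancellation}. Write $\bdt_i=(s_i,t_i)$ and compute the double quotient entry $\hat{\hat{\mB}}(x,y)$ symbolically in both orders. Each is $\mB(x,y)$ plus terms such as $\mB(s_1,y)\mB(x,t_1)$, $\mB(s_2,y)\mB(x,t_2)$, and cross terms like $\mB(s_1,t_2)\mB(s_2,y)\mB(x,t_1)$. The orders differ only by cross terms involving $\mB(s_1,t_2)$ and $\mB(s_2,t_1)$, so I will show these are zero. Shallowness gives that $s_1$ is the $\mf$-maximal facet of $t_1$ and $t_1$ the minimal cofacet of $s_1$, and likewise for $\bdt_2$. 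If $\mB(s_1,t_2)=1$, then $\mf(s_1)\le\mf(s_2)$ and $\mf(t_2)\ge\mf(t_1)$. I would try to combine these inequalities, possibly with the pairs of different dimension handled by Theorem~\ref{thm:lefschetz_is_safe}, to force a contradiction with both pairs being shallow. If that fails, I would prove the cleaner statement that the composed quotient equals the Schur-complement quotient by the $2\times2$ block on $\{s_1,s_2\}\times\{t_1,t_2\}$. This block is invertible because it is triangular with unit diagonal in the $\mf$-order, and the Schur-complement formula is symmetric in the two pairs.
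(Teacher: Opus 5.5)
The paper gives no proof of this lemma; it imports it from \cite[Lemma~3.2]{TransDepth}. Your confluence argument is therefore a self-contained substitute rather than a reconstruction, and it is essentially correct. The following parts are fine:
\begin{itemize}
\item termination, since each cancellation removes exactly one pair of $Q$;
\item persistence of shallowness after a cancellation, by Theorem~\ref{thm:cancelation};
\item the Newman-style induction, including your observation that you need a complete continuation from the double quotient.
\end{itemize}
The existence of a shallow pair in $Q$ at every stage is also fine, with one precision fix. Take minimality with respect to the same-dimensional relations recorded in $\hmU_{n+1}[:,\dth\bdt]$ and $\cmU_{n}[:,\bth\bdt]$, i.e.\ the first formulation of Observation~\ref{obs:shallow_means_relationless}. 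Observation~\ref{obs:left_up_corner} says nothing about relations such as $\dth\bdb\homo\bth\bdt$ between pairs of adjacent dimensions, and those relations are irrelevant to shallowness anyway.

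The one step that would fail is your first plan for the diamond: proving $\mB(s_1,t_2)=\mB(s_2,t_1)=0$ from shallowness. This is false. Take $k$-cells $s_1,s_2$ and $(k+1)$-cells $t_1,t_2$ with $\mf(s_1)<\mf(s_2)<\mf(t_1)<\mf(t_2)$ and $\mB(s_1,t_1)=\mB(s_1,t_2)=\mB(s_2,t_2)=1$, $\mB(s_2,t_1)=0$. Both pairs are shallow, yet $\mB(s_1,t_2)=1$.

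What shallowness does give is that at most one cross entry is nonzero: $\mB(s_1,t_2)=1$ forces $s_1<_\mf s_2$, while $\mB(s_2,t_1)=1$ forces $s_2<_\mf s_1$. That is all you need. If, say, $\mB(s_2,t_1)=0$, both orders produce
\[
\mB(x,y)+\mB(s_1,y)\mB(x,t_1)+\mB(s_2,y)\mB(x,t_2)+\mB(s_1,t_2)\,\mB(s_2,y)\,\mB(x,t_1).
\]
So drop the first attempt and go straight to your Schur-complement fallback, which is correct.

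Once you adopt that viewpoint, Newman's lemma is no longer needed. Take any sequence of cancellations of a set $P$ of pairs in which each pivot equals $1$ at its step. By the quotient formula for Schur complements, the sequence composes to the Schur complement of $\mB$ with respect to the (necessarily invertible) block whose rows are the birth cells of $P$ and whose columns are the death cells of $P$. That block depends only on $P$, so the lemma follows in one line under its stated hypothesis. Your inductive version buys one extra fact: a complete sequence of shallow cancellations exists from every intermediate quotient. The paper takes this for granted in the remark after the definition of $\mD_{n+1}^{\bds,\bdt}$.
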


\noindent Lemma \ref{lem:order_not_important} proves that the following definition is unambiguous.

\begin{definition}

Fix $\bds,\bdt \in \bd_{n}(\mf)$ such that the components of these pairs are consecutive columns in $\hmB_{n+1}$ or in $\cmB_{n}$.  Define $\mD_{n+1}^{\bds,\bdt}$ to be the matrix obtained by performing Lefschetz cancellations, always canceling shallow pairs, for all pairs lying in the bottom-right quadrant of $\bds$ (excluding $\bdt$ if it eventually lies in this region) and for all pairs lying in the bottom-right quadrant of $\bdt$ (excluding $\bds$ if it eventually lies in this region).

\end{definition}

\noindent Note that in the above definition, we can cancel all pairs in the bottom-right quadrants because, from Observation~\ref{obs:left_up_corner}, there is no $\bdb \in \bd_{n}(\mf)$ such that $\bdt \rel \bdb \rel \bds$.
After the cancellations, we have either (i) both $\bds$ and $\bdt$ are shallow, or (ii) $\bdt \rel \bds$ and $\bdt$ is shallow, or (iii) $\bds \rel \bdt$ and $\bds$ is shallow.

\begin{figure}
    \centering
    \includegraphics[width=0.99\linewidth]{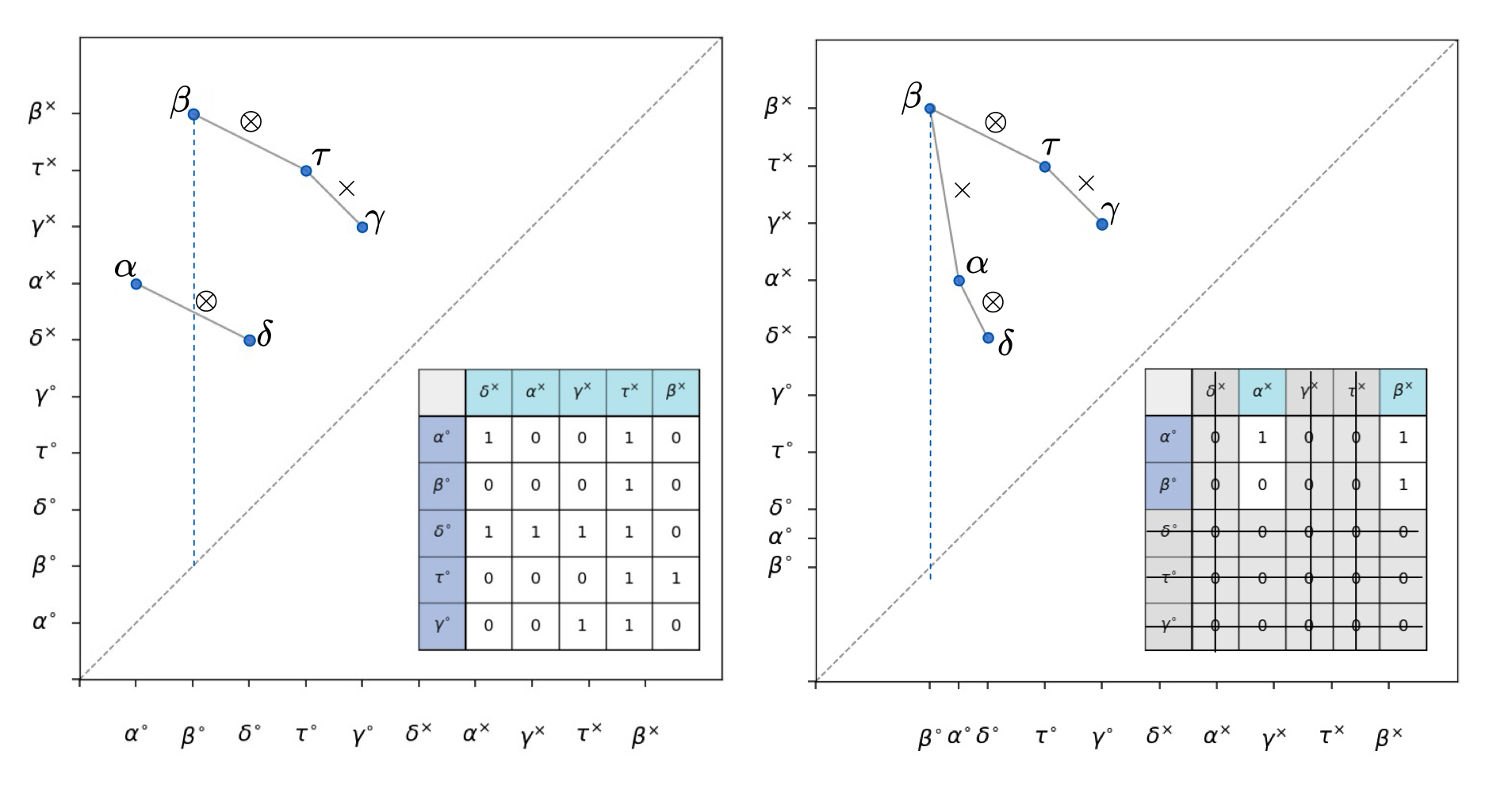}
    \caption{Left: $(n-1)$-st dimensional persistence diagram of some complex $X$, with $\hmB_{n}$ in the bottom-right corner. In the diagram, we denote by $\times$ homological relations between pairs, and by $\otimes$ relations which are homological and cohomological at the same time. To decide if moving $\bth\bdt$ past $\bth\bds$ changes the relations between cells, as determined by \cref{eq:death_death_update} in \cref{thm:birth_transposition}, we need to calculate $\hmB_{n}^{\bds,\bdt}$. Right: The persistence diagram with the updated relation after the transposition of $\bth\bds$ and $\bth\bdt$. In the bottom-right corner, we show $\mD_{n}^{\bds,\bdt}$ before the transposition. The cells deleted by the Lefschetz cancellations are crossed out.}
    \label{fig:transposition}
\end{figure}

\noindent Now we are ready to utilize results from \cite{TransDepth} in a~series of theorems.

\begin{theorem}[Result of death-cells transposition {\cite[Lemma~3.4]{TransDepth}}]\label{thm:death_transposition}
Let $\bds,\bdt$ be $n$-dimensional birth-death pairs such that $\mf(\bth\bds) < \mf(\bth\bdt)$. Then the transposition of $\dth\bds$ and $\dth\bdt$ does not change the values in $\hmU_{n+1}$, while the changes in $\cmU_n$ follow these rules:

\begin{bracketenumerate}

        \item If $\bdt \nohomo \bds$ and $\left(\bdt \cohomo \bds \text{ or } \mD_{n+1}^{\bds,\bdt}[\bth\bds,\dth\bdt]=1\right)$, then the pairing is unaffected and the row $\bth\bdt$ of the matrix $\cmU_{n}$ changes according to the formula:
    \begin{equation}\label{eq:death_death_update}
        \cmUaft_n[\bth\bdt,:] = \cmUbef_{n}[\bth\bdt,:]+\cmUbef_{n}[\bth\bds,:],
    \end{equation}

\item  If $\bdt \homo \bds$, then the pairs $(\bth\bds,\dth\bds), (\bth\bdt,\dth\bdt)$ turn into $(\bth\bds,\dth\bdt)$ and $(\bth\bdt,\dth\bds)$ and $\cmU_{n}$ changes as in \eqref{eq:death_death_update}.

\item  Otherwise, $\cmU_{n}$ and the pairing remain unchanged.

\end{bracketenumerate}
\end{theorem}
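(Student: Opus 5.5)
We reduce to a two-pair computation. Since the statement is attributed to \cite[Lemma~3.4]{TransDepth}, the plan is to rederive it in the present language by localizing. Let $\dth\bds$ and $\dth\bdt$ be adjacent columns of $\hmB_{n+1}$ (and adjacent rows-in-reversed-order of $\cmB_n$). We first cancel, via shallow pairs, all pairs in the bottom-right quadrants of $\bds$ and $\bdt$, obtaining $\mD_{n+1}^{\bds,\bdt}$. By Lemma~\ref{lem:order_not_important} the result is independent of the order. By Theorem~\ref{thm:lefschetz_is_safe} and Observation~\ref{obs:quotient_relation}, the entries of $\hmU_{n+1}$ and $\cmU_n$ indexed by surviving cells are unchanged. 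The cancellations commute with the transposition, since the canceled pairs lie strictly below/right of both pairs and the transposition does not move them. Hence it suffices to analyze the transposition in the quotient, where one of the three situations (i)--(iii) holds.

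\textbf{Claim that $\hmU_{n+1}$ is unchanged.} Because $\mf(\bth\bds)<\mf(\bth\bdt)$, Observation~\ref{obs:left_up_corner} forbids $\bds \homo \bdt$. So the lazy reduction never added the column of the earlier death cell into the later one among these two. For the homological side we use the standard vineyard update \cite{Vineyards}: $\hat{\hmU} = P\hmU P$, possibly with one column addition when the later column had been reduced by the earlier. We check case by case, using that the lows of the two columns are $\bth\bds$ and $\bth\bdt$ with the order of the births fixed. In every case the reduced columns end up identical up to relabeling, and the entry $\hmU_{n+1}[\dth\bds,\dth\bdt]$ or its transpose keeps the same meaning as ``column added to column''. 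If $\bdt\homo\bds$, the swap of deaths produces exactly the pairing switch of item (2). This is the classical vineyard case where the two deaths exchange births.

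\textbf{Cohomological side.} In $\cmB_n$ the death cells index rows, so the transposition is a row swap. In the lazy decomposition $\cmB_n=\cmR\cmU$, a row swap disturbs $\cmR$ only through the order in which lows are compared. Consider the birth columns $\bth\bds$ and $\bth\bdt$, whose lows are $\dth\bds$ and $\dth\bdt$ respectively. After the swap, the reduction of column $\bth\bdt$ must additionally clear entry $\dth\bds$ whenever column $\bth\bdt$ of the reduced coboundary has a nonzero in row $\dth\bds$. This is exactly $\bdt\cohomo\bds$ (a prior addition) or $\mD_{n+1}^{\bds,\bdt}[\bth\bds,\dth\bdt]=1$ in the local quotient, where the dual reduced column coincides with the coboundary. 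In that event the algorithm adds the $\bth\bds$ column into the $\bth\bdt$ column, which in $\cmU$ is the row update \eqref{eq:death_death_update}. If the entry vanishes, nothing is added, giving (3). When $\bdt\homo\bds$, the pairing swaps and the same addition occurs, giving (2).

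\textbf{Main obstacle.} The key step is justifying that the local quotient faithfully detects the nonzero entry $\cmR[\dth\bds,\bth\bdt]$. This requires tracking how shallow cancellations of the bottom-right pairs modify rows $\dth\bds$ and $\dth\bdt$ of the coboundary without creating spurious entries. It rests on Theorem~\ref{thm:lefschetz_is_safe} applied symmetrically, with an induction on the number of cancellations.
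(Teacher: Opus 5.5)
The paper gives no proof of this statement; it is imported from \cite[Lemma~3.4]{TransDepth}. Your sketch therefore has to stand on its own, and it has genuine gaps: the cohomological mechanism is backwards, and the key identification is missing.

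\emph{The cohomological mechanism.} The columns of $\cmB_n$ are in reversed $\mf$-order and $\mf(\bth\bds)<\mf(\bth\bdt)$, so column $\bth\bdt$ is reduced \emph{before} column $\bth\bds$. Lazy reduction can therefore never add column $\bth\bds$ into column $\bth\bdt$. Even if it did, that addition would modify row $\bth\bds$ of $\cmU_n$, not row $\bth\bdt$, since adding column $x$ into column $y$ updates row $x$.

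What \eqref{eq:death_death_update} records is the creation or destruction of the addition of column $\bth\bdt$ into column $\bth\bds$. Tracing the reduction across the row swap gives two situations:
\begin{itemize}
\item If $\dth\bdt$ precedes $\dth\bds$ and no switch occurs, the reduction of $\bth\bds$ now stops at low $\dth\bds$ one addition earlier. An existing $\bdt\cohomo\bds$ is thus destroyed.
\item If $\dth\bds$ precedes $\dth\bdt$, no relation between the two pairs exists. The addition is created iff $\cmR_n[\dth\bdt,\bth\bds]=1$.
\end{itemize}
The entry you single out, $\cmR_n[\dth\bds,\bth\bdt]$, does something else. When it equals $1$, column $\bth\bdt$ terminates at low $\dth\bds$ after the swap, so it triggers the switch of case (2), not the update of case (1). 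It also does not match the condition in the statement: $\mD_{n+1}^{\bds,\bdt}[\bth\bds,\dth\bdt]$ is the coboundary entry in row $\dth\bdt$, column $\bth\bds$. Finally, one created or destroyed addition only accounts for the entry $\cmU_n[\bth\bdt,\bth\bds]$. The rest of the row update comes from how the modified column $\cmR_n[:,\bth\bds]$ is reused by later columns, which you do not trace.

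\emph{The deferred step.} The step you call the ``main obstacle'' is the actual content, and Theorem~\ref{thm:lefschetz_is_safe} cannot supply it. That theorem preserves entries of $\hmU$ and $\cmU$, i.e.\ which additions occur. Cancellations do modify the coboundary columns, so entries of $\cmR$ are not preserved in general.

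What you need is $\cmR_n[\dth\bdt,\bth\bds]=\mD_{n+1}^{\bds,\bdt}[\bth\bds,\dth\bdt]$ for consecutive deaths with $\dth\bds$ first. This is the dual of Proposition~\ref{prop:optimization}, and a proof would go as follows:
\begin{itemize}
\item Expand $\cmR_n[:,\bth\bds]$ via $\cmV$ into coboundary columns of birth cells of pairs in the bottom-right quadrant of $\bds$.
\item Show that each shallow cancellation there removes at most one summand while leaving the column fixed.
\item Use consecutiveness to place the quadrant of $\bdt$ inside that of $\bds$.
\end{itemize}
Your claim that the cancellations ``commute with the transposition'' also needs proof: the canceled pairs must remain shallow after the swap, and in case (2) the quadrants themselves change.

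\emph{The homological side.} Your statement that ``in every case the reduced columns end up identical up to relabeling'' fails when the pairing switches. Your sentence that the earlier death column is never added into the later one is also wrong: that addition is exactly $\bdt\homo\bds$.

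Here is a counterexample. Take vertices $s<p<q<r$ followed by edges $c=\{q,r\}$, $b=\{s,q\}$, $a=\{p,r\}$. Lazy reduction gives pairs $(r,c),(q,b),(p,a)$ with $\hmU_1[c,a]=\hmU_1[b,a]=1$. After swapping $b$ and $a$, the pairs are $(q,a),(p,b)$, with $\hmU_1[c,a]=\hmU_1[a,b]=1$ and $\hmU_1[b,a]=0$.

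So invariance of $\hmU_{n+1}$ can be established only in cases (1) and (3), which are the only ones the paper's allowed moves use. In case (2) it fails however the two switched pairs are identified, contrary to what you (and the literal statement) assert.
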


\begin{theorem}[Result of birth-cells transposition {\cite[Lemma~3.3]{TransDepth}}]\label{thm:birth_transposition}

Let $\bds$ and $\bdt$ be $n$-dimensional birth-death pairs such that $\mf(\dth\bdt) < \mf(\dth\bds)$. Then the transposition of $\bth\bds$ and $\bth\bdt$ does not change the values in $\cmU_{n}$, while the changes in $\hmU_{n+1}$ follow these rules:

\begin{bracketenumerate}
 
 \item  If $\bdt \nocohomo \bds$ and $\left( \bdt \homo \bds~\textrm{or}~\mD_{n+1}^{\bds,\bdt}[\bth\bdt,\dth\bds] = 1 \right)$, then the pairing is unaffected and the row $\dth\bdt$ of the matrix $\hmU_{n+1}$ changes according to the formula:
    \begin{equation}\label{eq:birth_birth_update}
          \hmUaft_{n+1}[\dth\bdt,:] = \hmUbef_{n+1}[\dth\bdt,:]+\hmUbef_{n+1}[\dth\bds,:]
    \end{equation}

\item  If $\bdt \cohomo \bds$, then $\hmU_{n+1}$ changes as in $\eqref{eq:birth_birth_update}$, and pairs $(\bth\bds,\dth\bds), (\bth\bdt,\dth\bdt)$ turn into $(\bth\bdt,\dth\bds)$ and $(\bth\bds,\dth\bdt)$.

\item Otherwise $\hmU_{n+1}$ and the pairing remain unchanged.

    \end{bracketenumerate}
\end{theorem}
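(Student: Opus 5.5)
The plan is to reduce to the vineyard analysis of a transposition of two adjacent rows, after first stripping away every pair that cannot interact with the transposition. Since $\mf(\dth\bdt)<\mf(\dth\bds)$ and $\bth\bds,\bth\bdt$ are adjacent in the $\mf$-order, \cref{obs:left_up_corner} shows that no $n$-dimensional pair $\bdb$ satisfies $\bdt\rel\bdb\rel\bds$. This means all pairs in the bottom-right quadrants of $\bds$ and $\bdt$ can be removed by iterated cancellation of shallow pairs.

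\textbf{Step 1: reduce to $\mD_{n+1}^{\bds,\bdt}$.} By \cref{lem:order_not_important} the resulting matrix $\mD_{n+1}^{\bds,\bdt}$ is well defined. By \cref{thm:cancelation} and \cref{thm:lefschetz_is_safe}, the pairing and all entries of $\hmU_{n+1}$, $\cmU_n$ not involving the cancelled cells are unchanged. The transposition commutes with these cancellations, because the cancelled pairs stay shallow when two cells outside them are swapped. So it suffices to verify the claims in the reduced complex, where one of the trichotomies (i)--(iii) after the definition of $\mD^{\bds,\bdt}$ holds.

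\textbf{Step 2: show $\cmU_n$ is unchanged.} The transposition swaps two adjacent rows of $\hmB_{n+1}$ and two adjacent columns of $\cmB_n$. In the coboundary picture, the columns $\bth\bds,\bth\bdt$ change order. I would show that the lazy reduction of $\cmB_n$ never adds one of these columns to the other. Such an addition requires equal lows, but the lows are the distinct death cells $\dth\bds,\dth\bdt$, and the column ending at the lower of them is already settled before the other is processed. Hence $\cmU_n$ is merely reindexed, which gives the first claim.

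\textbf{Step 3: track $\hmU_{n+1}$ through the row swap.} Write the swapped matrix as $P\hmB_{n+1}=(P\hmR_{n+1})\hmU_{n+1}$. The matrix $P\hmR_{n+1}$ stays reduced except when the columns $\dth\bds$ and $\dth\bdt$ both have nonzero entries in rows $\bth\bds,\bth\bdt$ with lows exactly there.
\begin{itemize}
\item If the swap creates a conflict of lows, it is resolved by one column addition of $\dth\bdt$ into $\dth\bds$, or the reverse. This is the situation $\bdt\cohomo\bds$ of case (2), where the pairs are exchanged.
\item If there is no conflict but the column of $\dth\bdt$ must be re-reduced because its low now lands on row $\bth\bds$, the entry $\mD_{n+1}^{\bds,\bdt}[\bth\bdt,\dth\bds]$ or an existing relation $\bdt\homo\bds$ detects this. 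One addition fixes it, and in $\hmU$ it appears as adding row $\dth\bds$ to row $\dth\bdt$, which is \eqref{eq:birth_birth_update}.
\item In all remaining cases $P\hmR_{n+1}$ is already the lazy reduction, so nothing changes.
\end{itemize}
Finally, I would argue that the reduced matrix produced this way coincides with what the lazy algorithm itself outputs, using uniqueness of the lazy decomposition for the fixed column order.

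\textbf{Main obstacle.} The delicate step is the distinction in case (1): showing that $\mD_{n+1}^{\bds,\bdt}[\bth\bdt,\dth\bds]=1$ exactly characterizes when the lazy algorithm adds the extra column in the non-related case. This needs an explicit comparison of which cell supplies the low after the cancellations, and I would handle it by examining the $2\times2$ block of $\mD_{n+1}^{\bds,\bdt}$ on rows $\bth\bds,\bth\bdt$ and columns $\dth\bds,\dth\bdt$. Alternatively, the whole statement follows from \cref{thm:death_transposition} by dualizing, since passing to $\cmB$ exchanges the roles of births and deaths and of $\homo$ and $\cohomo$.
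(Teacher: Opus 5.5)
The paper does not prove this statement; it is quoted from \cite[Lemma~3.3]{TransDepth}. Your argument therefore has to stand on its own, and Step~2 contains a genuine error. The lazy reduction of $\cmB_n$ \emph{can} add one of the columns $\bth\bds,\bth\bdt$ to the other: $\bdt\cohomo\bds$ means precisely $\cmU_n[\bth\bdt,\bth\bds]=1$. Your justification conflates final lows with the low at the moment of addition. Column $\bth\bds$, which is processed after $\bth\bdt$ in the reversed order, may pass through the low $\dth\bdt$ before settling at $\dth\bds$.

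In case~(2) the conclusion of Step~2 is not even true entrywise. After the swap, $\bth\bdt$ precedes $\bth\bds$ in the $\mf$-order, so the entry $\cmU_n[\bth\bdt,\bth\bds]=1$ would sit strictly below the diagonal and must become $0$. Your commutation argument does work once you use the hypothesis $\cmU_n[\bth\bdt,\bth\bds]=0$, together with Observation~\ref{obs:left_up_corner} to rule out $\bds\cohomo\bdt$. It then shows that $\cmU_n$ is unchanged in cases~(1) and~(3). The invariance of $\cmU_n$ can only be proved outside case~(2).

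Step~3 asserts rather than proves the identifications that carry the theorem, and it partly misdescribes them. After the row swap, $P\hmR_{n+1}$ has a conflict of lows in exactly two situations:
\begin{itemize}
\item if $\bth\bds$ immediately precedes $\bth\bdt$, iff $\hmR_{n+1}[\bth\bds,\dth\bdt]=1$;
\item if $\bth\bdt$ immediately precedes $\bth\bds$, iff $\hmR_{n+1}[\bth\bdt,\dth\bds]=1$.
\end{itemize}
In both situations the lazy fix adds column $\dth\bdt$ into $\dth\bds$, and the pairs switch only in the first. So the $\mD^{\bds,\bdt}_{n+1}$-subcase of~(1) \emph{is} a conflict, and it is column $\dth\bds$ (not $\dth\bdt$) that acquires the low $\bth\bdt$. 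The real work is showing that the first condition is equivalent to $\bdt\cohomo\bds$, an entry of a different matrix, and that the second is equivalent to $\mD^{\bds,\bdt}_{n+1}[\bth\bdt,\dth\bds]=1$. The latter is essentially Proposition~\ref{prop:optimization}. You do neither.

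More seriously, in the subcase $\bdt\homo\bds$, $\bdt\nocohomo\bds$ there is no conflict at all. Then $P\hmB_{n+1}=(P\hmR_{n+1})\hmU_{n+1}$ is a valid reduced decomposition, but it is not the lazy one. Consider the partial column of $\dth\bds$ at which $\dth\bdt$ was added. After the swap its low lies on $\bth\bds$, which no earlier column owns. So the algorithm stops there, and $\hmU_{n+1}[\dth\bdt,\dth\bds]$ drops to $0$.

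Reduced decompositions are far from unique, so ``uniqueness of the lazy decomposition'' cannot certify a fixed-up matrix. You need an explicit inductive comparison of the lazy addition sequences before and after the swap. It must include every later column that added $\dth\bds$ or $\dth\bdt$, both to obtain the full row identity \eqref{eq:birth_birth_update} and to show that no other row changes. Relatedly, Theorem~\ref{thm:lefschetz_is_safe} says nothing about rows and columns of the cancelled death cells. Step~1 alone therefore does not transfer whole-matrix claims back from the quotient.

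Your closing remark is correct as a reduction. Anti-transposing ($\hmB_{n+1}\leftrightarrow\cmB_n$, reversed order, $\homo\leftrightarrow\cohomo$) turns this statement into Theorem~\ref{thm:death_transposition}. That theorem is also only cited in the paper, however, so this relocates the proof rather than supplying it.
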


\begin{theorem}[Result of birth-death transposition {\cite[Lemma~3.5]{TransDepth}}]\label{thm:mixed_transposition}
A transposition between a~birth and a~death cell, which is a~result of increasing birth, or decreasing death does not affect pairing or relations between birth-death pairs. 
\end{theorem}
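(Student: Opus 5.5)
Let $x$ and $y$ be the two cells that are adjacent in the $\mf$-order and get transposed, where $x$ is a birth cell and $y$ is a death cell. Increasing a birth or decreasing a death always takes the order from $x<_{\mf}y$ to $y<_{\mf}x$. I will split into cases by dimension, since a transposition only changes the (co)boundary matrices in which both $x$ and $y$ index rows or columns. If $|\dim x-\dim y|\geq 2$, no matrix sees both cells in the same index set, so nothing changes. If the dimensions differ by one, the only entry affected is $\hmB(x,y)$ or $\hmB(y,x)$, which must be $0$ for the new order to still come from a $\dmf$ (weak monotonicity, after separating the values). Again nothing changes. The substantive case is $\dim x=\dim y=n$. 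Then $x,y$ are adjacent columns of $\hmB_n$ and adjacent rows of $\hmB_{n+1}$ (and dually in $\cmB_{n-1},\cmB_n$). By the duality between $\hmU$ and $\cmU$ it suffices to treat $\hmB_n$ and $\hmB_{n+1}$.

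\textbf{Columns of $\hmB_n$.} Since $x$ is a birth cell, $\hmR_n[:,x]=0$, and since $y$ is a death cell, $\hmR_n[:,y]$ has a low $\ell$. First, $\hmU_n[x,y]=0$: the algorithm never adds a column that has become zero, because its low is undefined. So swapping the two columns of $\hmR_n$ and the rows and columns of $\hmU_n$ keeps $\hmU_n$ upper triangular. It remains to check that lazy reduction in the new order performs exactly the same additions.
\begin{itemize}
\item For column $y$, the only new predecessor is $x$, whose reduced column is zero, so it is never used.
\item For column $x$, the new predecessor $y$ could only be used if some intermediate low of column $x$ equalled $\ell$. In the original run, pivot $\ell$ was not yet claimed when $x$ was reduced, because $y$ came later. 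If column $x$ had reached low $\ell$, the while-loop would have stopped with a nonzero column, contradicting that $x$ is a birth cell.
\end{itemize}
Hence the additions are identical, $\hmU_n$ is merely permuted, and the pairing is preserved.

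\textbf{Rows of $\hmB_{n+1}$.} Now $x$ is the low of some column $\dth\bds$ of $\hmR_{n+1}$. The row $y$ is the low of no column of the final $\hmR_{n+1}$, since $y$ is a death cell in dimension $n$ and hence not a birth cell. Swapping two adjacent rows changes no entry values. It can only change which row counts as lowest in a column that is nonzero in both rows $x$ and $y$. Before the swap row $y$ lies below row $x$; afterwards row $x$ lies below row $y$.

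I expect this to be the main obstacle, because intermediate columns during the reduction might have their low jump between $x$ and $y$ and trigger different additions. My first attempt is a prefix-rank argument. The lazy reduction's low and $\hmU$ entries are determined by ranks of lower-left submatrices of $\hmB_{n+1}$, namely $\operatorname{rank}$ of the rows $\geq r$ restricted to the columns $\leq c$. Swapping two adjacent rows changes only the rank counts whose row cut falls between $x$ and $y$. These counts are exactly governed by whether $y$ is a pivot, and because $y$ is never a pivot they agree before and after the swap. From equality of all relevant ranks I conclude that the lows coincide and, step by step, that the same columns are added. If this bookkeeping proves awkward, I would fall back on the Vineyards case analysis~\cite{Vineyards} for swapping a positive and a negative simplex with vanishing $\hmU$ entry, which gives the same conclusion.

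Combining the three dimension cases with the dual statements for $\cmU$ shows that $\hmU$, $\cmU$ and the pairing are unchanged, as claimed.
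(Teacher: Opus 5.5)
Your reduction to the case $\dim x=\dim y=n$ is fine, and so is your treatment of the columns of $\hmB_n$. There is one slip: in the new order $y$ precedes $x$, so column $y$ \emph{loses} the predecessor $x$ rather than gaining it. The conclusion is unchanged, because the reduced column $x$ is zero and was never added.

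The gap is in the row case for $\hmB_{n+1}$. This is exactly where the relations $\hmU_{n+1}$ are at stake, and the prefix-rank argument does not work, for three reasons.
\begin{itemize}
\item Ranks of lower-left submatrices determine the pairing but not the lazy $\hmU$. Take rows $v_1<v_2<v_3$. The boundary matrices with columns $(v_2+v_3,\ v_1+v_3)$ and $(v_2+v_3,\ v_1+v_2)$ have identical rank functions. Yet lazy reduction adds the first column to the second in one case and not in the other.
\item The ranks with the cut between $x$ and $y$ do not agree before and after the swap. Write $B$ for the rows below both. The lower block changes from $B\cup\{y\}$ to $B\cup\{x\}$. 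Once the column range contains $\dth\bds$, the rank goes up by one; that is precisely how the pivot at $x$ survives.
\item Falling back on the vineyard case analysis does not close the gap either. A vineyard update produces \emph{some} reduced decomposition of the permuted matrix. The relations here are defined by the specific $\hmU$ produced by lazy reduction, so you would still have to show that the resulting decomposition is the lazy one.
\end{itemize}

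The missing step is short, and it is the same trick you used for columns.
\begin{itemize}
\item For any column, the lows computed in the old and new row orders differ only if the column is nonzero in both rows $x$ and $y$ and zero below them. In other words, they differ only if its old low is $y$.
\item In the original lazy run, no column ever has low $y$, at any intermediate stage. The lows of already reduced columns are final pivots, and $y$ is not a pivot of $\hmR_{n+1}$, since a death cell is not a birth cell. So the while-loop would halt there and leave $y$ as a final low, which is a contradiction.
\item Now argue by induction over the steps of the reduction. Every comparison of lows made in the new run returns the same answer as in the old run. Hence the two runs perform identical column additions, and $\hmR_{n+1}$, $\hmU_{n+1}$ and the pairing are literally unchanged.
\end{itemize}
This does not need $x$ to be paired; it may be an essential class. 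Dualizing gives the corresponding statement for $\cmU_{n-1}$.
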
 

\noindent Figure \ref{fig:transposition} presents an example of how a transposition affects the relationship between birth-death pairs.
Now we introduce our own propositions, which will be useful later.


\begin{restatable}{proposition}{WeCanOnlyLose}\label{prop:we_can_only_lose}
    Fix a~pair $\bds \in \bd_{(n-1)}(\mf)$. 
    A transposition that increases the value of $\bth\bds$  or decreases the value of $\dth\bds$ and does not cause a~switch cannot create a~relation $\bdt \rel \bds$ for any pair $\bdt$.
\end{restatable}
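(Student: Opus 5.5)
The plan is a case analysis on the type of the elementary transposition, reading off the change in $\hmU$ and $\cmU$ from Theorems~\ref{thm:death_transposition}, \ref{thm:birth_transposition} and \ref{thm:mixed_transposition}. The key point is that a relation $\bdt\rel\bds$ is recorded in the \emph{columns} of $\hmU_{n}$ and $\cmU_{n-1}$ indexed by the components of $\bds$. The rows of these entries are $\dth\bdt$ and $\bth\bdt$, for $\bdt\neq\bds$. So it suffices to show that no such entry can change from $0$ to $1$.

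\textbf{Trivial cases.} First suppose the cell moved past $\bth\bds$ (or $\dth\bds$) has a dimension different from that of $\bth\bds$ (resp.\ $\dth\bds$). Then the order of rows and columns in the relevant matrices does not change, so the lazy reduction does not change either. Next suppose the transposition is between a birth cell and a death cell, as happens when $\bth\bds$ overtakes a death cell or $\dth\bds$ is overtaken by a birth cell. Then Theorem~\ref{thm:mixed_transposition} says that no relation changes.

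\textbf{Birth--birth transposition.} Here $\bth\bds$ increases past $\bth\bdb$ for some $(n-1)$-dimensional pair $\bdb$. Since no switch occurs, case~(2) of Theorem~\ref{thm:birth_transposition} is excluded, $\cmU_{n-1}$ is unchanged, and at most one row of $\hmU_{n}$ changes, by~\eqref{eq:birth_birth_update}. Let $\bdt$ denote the pair with the smaller death value in that theorem, so the modified row is $\dth\bdt$.
\begin{itemize}
\item If $\bdt=\bds$, then only the row of $\dth\bds$ changes. This alters only the outgoing relations of $\bds$, and no entry in a column of $\bds$ with row $\ne\dth\bds$ is touched.
\item If $\bdt=\bdb$, then the new entries in the columns of $\bds$ lie in row $\dth\bdb$, so the only candidate new relation is $\bdb\rel\bds$. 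Before the move $\mf(\bth\bds)<\mf(\bth\bdb)$; after it, $\bth\bds$ is above $\bth\bdb$. Also $\mf(\dth\bdb)<\mf(\dth\bds)$ is unchanged. So $\bdb$ is no longer below and to the right of $\bds$, and Observation~\ref{obs:left_up_corner} forbids $\bdb\rel\bds$ in the new filtration.
\end{itemize}
The same observation also rules out a relation with $\bdb$ through the column of $\bth\bds$.

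\textbf{Death--death transposition.} Here $\dth\bds$ decreases past $\dth\bdb$. This case is dual. Theorem~\ref{thm:death_transposition}, again with the switch case excluded, leaves $\hmU_{n}$ unchanged. It changes only the row $\bth\bdt$ of $\cmU_{n-1}$, where $\bdt$ is the pair with the larger birth value.
\begin{itemize}
\item If $\bdt=\bds$, only the outgoing relations of $\bds$ change.
\item Otherwise $\bdt=\bdb$. After the move $\mf(\dth\bds)<\mf(\dth\bdb)$, while $\mf(\bth\bdb)>\mf(\bth\bds)$ still holds. So $\bdb$ is no longer in the bottom-right quadrant of $\bds$, and Observation~\ref{obs:left_up_corner} again excludes $\bdb\rel\bds$.
\end{itemize}

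\textbf{Main obstacle.} The delicate step is the bookkeeping in the two same-type cases. One must keep straight which pair plays the role of $\bds$ versus $\bdt$ in Theorems~\ref{thm:birth_transposition} and~\ref{thm:death_transposition}, note that a row update adds exactly one row to another, and check that the only row that can gain a $1$ in a column of $\bds$ belongs to the pair just overtaken. That pair is then excluded purely geometrically by Observation~\ref{obs:left_up_corner}, which must be applied to the new function.
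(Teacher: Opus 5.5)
Your proof is correct, and its skeleton matches the paper's. Mixed transpositions are dismissed by Theorem~\ref{thm:mixed_transposition}. Only the column $\hmU_n[:,\dth\bds]$ (resp.\ $\cmU_{n-1}[:,\bth\bds]$) needs watching. The subcase where the modified row belongs to $\bds$ itself is settled the same way: only outgoing relations change, or equivalently, upper triangularity leaves the column untouched.

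The two routes differ in the other subcase, where the overtaken pair $\bdb$ has the smaller death and its row $\dth\bdb$ receives row $\dth\bds$.
\begin{itemize}
\item The paper argues from the state before the move. Because $\bdb$ lies in the bottom-right quadrant of $\bds$, it shows via Observation~\ref{obs:quotient_relation} that the case~(1) condition (in particular $\mD^{\bds,\bdb}[\bth\bdb,\dth\bds]=1$) forces $\bdb\homo\bds$ to hold already. The update then gives $1+1=0$, so the relation is destroyed.
\item You argue from the state after the move. With no switch, the updated $\hmU_n$ is the lazy-reduction matrix of a $\dmf$ with the same pairs, in which $\bth\bdb$ now precedes $\bth\bds$. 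Observation~\ref{obs:left_up_corner} then forces the single entry that could change to be $0$.
\end{itemize}

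Your route is shorter and needs no information about the quotient matrix $\mD^{\bds,\bdb}$. It also treats the death--death case explicitly rather than by ``similarly''. The paper's route says a bit more about the mechanism: in this direction, case~(1) can occur only when $\bdb\homo\bds$ was already present, so the relation is genuinely lost. That fact also follows from your argument by comparison with \eqref{eq:birth_birth_update}.
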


\noindent Note that a transposition may involve skipping two columns and rows when bypassing a combinatorial vector. The following proposition helps decrease complexity of the final algorithm.

\begin{proposition} \label{prop:vectors_not_create_critical_relations}
    Let $\mf, \omf$ be two $\dmf$s such that $\cV_{\mf} = \cV_{\omf}$ and the difference between $\mf$-order and $\omf$-order is a~transposition between a~critical cell and a~vector. Then $\mf$ and $\omf$ generate the same off-diagonal birth-death pairs and relations between them.
\end{proposition}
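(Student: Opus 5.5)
The plan is to reduce the statement to the behaviour of the Morse complex and then use the transposition rules for birth and death cells. Since $\cV_{\mf}=\cV_{\omf}$, Corollary~\ref{thm:morse_complex} identifies $\upbd(\mf)$ with $\bd(\mf_{\cM})$ and $\upbd(\omf)$ with $\bd(\omf_{\cM})$, on the same Lefschetz complex $\cM(\cV_{\mf})=\cM(\cV_{\omf})$. It also says that the relations between off-diagonal pairs are the restrictions of $\hmU$ and $\cmU$ to critical cells. So it suffices to show that the induced filtrations $\mf_{\cM}$ and $\omf_{\cM}$ on the critical cells give the same pairing and the same $U$-matrices.

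The key observation is that transposing a critical cell $c$ with a vector $(v,w)$ (two consecutive cells with equal value) does not change the relative order of the critical cells among themselves. The $\mf$-order and the $\omf$-order differ only in the position of $c$ relative to $v$ and $w$, and both of these are non-critical. Hence $\mf_{\cM}$ and $\omf_{\cM}$ induce the same total order on $\crit(\cV_{\mf})$.

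The Morse complex boundary $D_{\cM}$ is defined purely from paths in $G_{\cV}$. That graph depends only on $\cV$ and $X$, not on the values, so the boundary matrices coincide. They are indexed in the same order, and the lazy reduction (Algorithm~\ref{alg:lazy-reduction}) is deterministic. Therefore $\morsehmU$, $\morsecmU$ and the resulting pairings are identical for $\mf$ and $\omf$. Applying Corollary~\ref{thm:morse_complex} again transfers this back to $X$: the off-diagonal birth-death pairs and their relations are the same for both functions.

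The one point requiring care is checking that both $\mf$ and $\omf$ are genuinely discrete Morse functions with the same gradient field, so that the corollary applies to each. This is given by hypothesis. If one prefers to avoid the Morse complex, the alternative route is to view the move as two elementary transpositions, of $c$ past $v$ and of $c$ past $w$. One would then argue with Theorems~\ref{thm:death_transposition}, \ref{thm:birth_transposition} and \ref{thm:mixed_transposition} and the diagonal pair $(v,w)$, which is shallow in the relevant local sense. That case analysis is messier, so I would use the Morse-complex argument above, whose only nontrivial ingredient is Corollary~\ref{thm:morse_complex}.
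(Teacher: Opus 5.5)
Your proposal is correct and follows the same route as the paper. The paper's proof is the one-line observation that transposing a critical cell with a vector leaves the (co)boundary matrix of the Morse complex $\mathcal{M}(\mathcal{V}_h)$ unchanged, so the pairing and the relations among critical cells cannot change. Your added details (the relative order of critical cells is preserved, the lazy reduction is deterministic, and the result transfers back to $X$ via Corollary~\ref{thm:morse_complex}) are exactly what that one-liner leaves implicit.
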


\begin{proof}
    As this process does not change the (co)boundary matrix of $\cM(\cV_{\mf})$, it cannot change the pairing or relations between critical cells.
\end{proof}

\noindent Finally, the following two corollaries give us an opportunity to focus only on specific cases during the construction of the homotopy below.

\begin{corollary}\label{cor:bd_pairs}
    Take a~pair $\bds \in \bd_{n}(\mf)$.  If $\cx$ is a~cell such that $\bth\bds <_{\mf} \cx$ and also $\cx \cohomo \bth\bds$, then $\cx$ is an $n$-dimensional birth cell. Analogously, if $\cy <_{\mf} \dth\bds$ and $\cy\homo\dth\bds$, then $\cy$ is an $(n+1)$-dimensional death cell.

\end{corollary}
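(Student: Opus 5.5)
We prove the first claim directly from the structure of the lazy decomposition $\cmD_n=\cmR_n\cmU_n$; the second claim is the same argument applied to $\hmD_{n+1}=\hmR_{n+1}\hmU_{n+1}$, with the reversed order.

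\textbf{Step 1: reduce to a statement about columns of $\cmR_n$.} Since $\cmU_n$ is built by Algorithm~\ref{alg:lazy-reduction}, $\cmU_n[\cx,\bth\bds]=1$ with $\cx\neq\bth\bds$ means the following. During the reduction of column $\bth\bds$ of $\cmR_n$, the (already reduced) column of $\cx$ was added to it. That column has a nonzero low equal to the current low of column $\bth\bds$. The matrix $\cmU_n$ is indexed by $n$-dimensional cells, so $\cx$ is $n$-dimensional.

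\textbf{Step 2: show $\cx$ is a birth cell.} A column of $\cmR_n$ is only ever added when it is reduced and nonzero, with a low that no earlier column shares. By the duality recalled after Algorithm~\ref{alg:lazy-reduction} (\cite{dualities}), an $n$-cell whose column in $\cmR_n$ is nonzero with low $z$ is exactly the birth cell of the pair $(\cx,z)$. Hence $\cx$ is an $n$-dimensional birth cell, which is the statement already noted in the text that $\cx\cohomo\cy$ forces $\cx$ to be a birth cell.

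\textbf{Step 3: the order condition.} The hypothesis $\bth\bds<_{\mf}\cx$ is consistent with, and in fact forced by, the reduction. Columns of $\cmD_n$ follow the reversed $\mf$-order, and only preceding columns are added. So $\cx$ precedes $\bth\bds$ in the reversed order, which is the same as $\bth\bds<_{\mf}\cx$. This confirms that the cells appearing in such relations are exactly the later $n$-dimensional birth cells, which is what the later homotopy arguments need.

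\textbf{Step 4: the dual statement.} Suppose $\cy<_{\mf}\dth\bds$ and $\hmU_{n+1}[\cy,\dth\bds]=1$. Then the column of $\cy$ in $\hmR_{n+1}$ was added to the column of $\dth\bds$. So it is nonzero, its low is some birth cell $b$, and $(b,\cy)$ is a birth-death pair; thus $\cy$ is an $(n+1)$-dimensional death cell. Dimension $n+1$ again holds because $\hmU_{n+1}$ is indexed by $(n+1)$-cells.

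\textbf{Main subtlety.} The only point needing care is that "added column" means "reduced, nonzero column." This holds because the while-loop only adds a preceding column $\cx$ whose low matches the current low, which requires $\cmR_n[:,\cx]\neq0$. In particular, zero columns (death cells in the coboundary picture, or essential generators) never contribute to $\cmU_n$. Once this is checked, both parts are immediate, and no transposition theory from \cref{thm:death_transposition}–\cref{thm:mixed_transposition} is needed.
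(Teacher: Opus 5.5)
Your proof is correct and matches the paper's argument: a relation with $\cx\neq\bth\bds$ means the column of $\cx$ was added during the lazy reduction, and added columns are nonzero and so have a low. A nonzero reduced column of $\cmR_n$ (resp.\ $\hmR_{n+1}$) makes its cell a birth (resp.\ death) cell, while your Step~3 is only a consistency remark, since the hypothesis $\bth\bds<_{\mf}\cx$ just guarantees $\cx\neq\bth\bds$.
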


\begin{proof}
    Because $\cy \homo \dth\bds$, the column indexed by $\cy$ was added to column $\dth\bds$ during the lazy reduction of matrix $\hmB_{n+1}$. Because lazy reduction never adds zero columns, column $\cy$ in $\hmR_{n+1}$ has a~unique low, so it is a~death cell. Analogously, if $\cx \cohomo \bth\bds$, then column $\cx$ was added to column $\bth\bds$ in $\cmB_{n}$, so $\cx$ is a~birth cell.
\end{proof}

\section{Constructing the homotopy}

\subsection{Homotopy}\label{sec:homotopy}
Recall that a~linear homotopy between two maps $f_0$ and $f_1$ is a~family of maps $f_t(x):=H(t,x) = (1-t)f_0(x) + tf_1(x)$ for $t \in [0,1]$.

\noindent We say that $A \subset X$ is \emph{connected} if it Hasse diagram---the graph whose vertices are the cells of $A$ with an edge for every boundary relation---is connected.
An $f$-\emph{induced partition} is a~partition $\cA$ of $X$ into maximal, with respect to inclusion, sets $A$, such that $f$ is constant on $A$, and every $A$ is connected.

\begin{restatable}{theorem}{DmfHomotopy}\label{thm:dmf-homotopy}

    Let $f_0$ and $f_1$ be two $\dmf$s defined on $X$ such that $\cV_{f_0} = \cV_{f_1}$.
    Let $f_t(\cx) := H(t,\cx)$ be the linear homotopy between $f_0$ and $f_1$.
    Let $\cV_{f_t}$ denote the $f_t$-induced partition of~$X$.
    Then, $\cV_{f_t} = \cV_{f_0}$  
       for every $t\in[0,1]$. 

\end{restatable}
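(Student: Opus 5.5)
We have to show that for every $t\in[0,1]$ the $f_t$-induced partition equals $\cV_{f_0}$. Write $\cV:=\cV_{f_0}=\cV_{f_1}$. The plan has two halves: first show that every block of $\cV$ lies inside a block of the $f_t$-induced partition, and then show that blocks of the $f_t$-induced partition never merge two different elements of $\cV$.

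For the first half, take a vector $\{\cx,\cy\}\in\cV$. Then $f_0(\cx)=f_0(\cy)$ and $f_1(\cx)=f_1(\cy)$, so $f_t(\cx)=f_t(\cy)$ by linearity. The vector is also connected in the Hasse diagram, since one cell is a facet of the other. Hence each element of $\cV$, vector or critical singleton, is a connected set on which $f_t$ is constant.

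For the second half, suppose $A$ is a block of the $f_t$-induced partition containing cells from two different elements of $\cV$. Because $A$ is connected, its Hasse diagram contains an edge between cells $\cx,\cy$ lying in different elements of $\cV$. Say $\hmB(\cx,\cy)=1$ and $f_t(\cx)=f_t(\cy)$.

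By weak monotonicity of both $f_0$ and $f_1$ we have $f_0(\cx)\le f_0(\cy)$ and $f_1(\cx)\le f_1(\cy)$. If either inequality were an equality, then by the pairing condition together with almost injectivity, $\{\cx,\cy\}$ would be a level set of $f_0$ (or $f_1$). It would then be an element of $\cV_{f_0}=\cV_{f_1}$, contradicting the choice of $\cx,\cy$. So both inequalities are strict, and the convex combination gives $f_t(\cx)<f_t(\cy)$, a contradiction. Therefore every block of the $f_t$-induced partition is contained in a single element of $\cV$.

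Combining the two halves, together with maximality of the blocks in the $f_t$-induced partition, the blocks are exactly the elements of $\cV$, so $\cV_{f_t}=\cV$.

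The main obstacle is the endpoint case: one must make sure that an equality $f_0(\cx)=f_0(\cy)$ across a facet edge forces $\{\cx,\cy\}$ to be a vector of $\cV_{f_0}$. This follows from conditions (ii)–(iii) of Definition~\ref{def:dmf}: the preimage $f_0^{-1}(f_0(\cx))$ has at most two elements, and it contains both $\cx$ and $\cy$, so it equals $\{\cx,\cy\}$. Note also that $f_t$ itself need not be injective on non-adjacent cells. This is why the statement is phrased via connected induced partitions rather than raw preimages, and the argument above only ever uses Hasse-diagram edges.
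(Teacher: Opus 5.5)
Your proof is correct, and it reaches the conclusion by a somewhat different route than the paper.

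\textbf{The paper's route.} For $t\in(0,1)$, the paper first shows, as you do, that each $v_0\in\cV_{f_0}$ lies in some block $\Lambda(v_0)$ of the $f_t$-induced partition. It then proves $\Lambda$ injective by a case split on pairs $v_0,v_0'$:
\begin{itemize}
\item if $f_0$ and $f_1$ order $v_0,v_0'$ the same way, then $f_t$ separates them;
\item if they order them oppositely, weak monotonicity shows that no cell of $v_0$ is a face of a cell of $v_0'$, and connectivity of blocks is invoked.
\end{itemize}
Finally, the cardinality count $\sum\#v_0\le\sum\#\Lambda(v_0)\le\#X$ forces $v_0=\Lambda(v_0)$.

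\textbf{Your route.} You replace injectivity and counting by a single edge-level fact. A Hasse edge between two different elements of $\cV$ is strictly increasing under both $f_0$ and $f_1$, by weak monotonicity plus almost injectivity. Hence it is strictly increasing under every $f_t$, so it can never lie inside a block. Connectivity then confines each block to one element of $\cV$, and maximality finishes the argument.

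\textbf{Comparison.} Your version is shorter and covers the endpoints $t=0,1$ uniformly, whereas the paper treats only $t\in(0,1)$. It also makes explicit something the paper's opposite-order case uses only implicitly. Ruling out direct adjacency of $v_0$ and $v_0'$ is not enough on its own, because a connected block could a priori join them through intermediate elements of $\cV$. Your edge-level observation is exactly what excludes this.
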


\proofappendix

\noindent Using this theorem, we can represent our homotopy as a~finite series of transpositions, allowing us to analyze only a~finite number of time steps.
Indeed, along a homotopy $(f_t)_{t\in[0,1]}$ there are only finitely many parameters $t$ at which $f_t$ fails to be a $\dmf$.
On each open interval between two such parameters, the induced $f_t$-order is well-defined and remains constant (in particular, it does not depend on $t$).
Consequently, for a sufficiently fine discretization of $[0,1]$, consecutive $f_t$-orders differ by exactly one transposition.

\subsection{Journey to the diagonal}\label{sec:evade}

Consider an example in Figure~\ref{fig:vk_and_PD} and assume that our goal is to reduce the lifetime of the pair $\bds$ to be arbitrarily small, without changing the pairing or the vector field. 
To reduce the lifetime, we may increase the value of $\bth\bds$ and decrease the value of $\dth\bds$, along with a~set of vectors. We        may implement this as a~series of ``moves'' of the birth-death pair to the right and down in the persistence diagram.

\noindent Unfortunately, our moves are constrained:
if we want to preserve the original vector field, then we cannot decrease $\dth\bds$ below $\bth\bdt$ as $\dth\bds \pathh{} \bth\bdt$, and similarly, $\bth\bds$ cannot increase above~$\dth\bdb$. Moreover, as $\dth\bdt \homo \dth\bds$ and $\dth\bdb \homo \dth\bds$, we also cannot decrease $\dth\bds$  below these levels, without switches in pairing. Even worse, because $\bth\xi \cohomo \bth\bds$, $\bth\bds$ cannot increase above $\bth\xi$ without another switch.

\noindent This appears to be a~serious obstacle. However, when we examine the persistence diagram (see bottom part of the  Figure~\ref{fig:vk_and_PD}), we notice, following Observation~\ref{obs:left_up_corner}, that increasing $\bth\bds$ above $\bth\bdt$ breaks both homological relations of $\dth\bds$ without changing the pairing. 
Afterwards, we are able to decrease $\dth\bds$ as close to $\bth\bds$ as we want. This motivates our central notion of forbidden regions, which describe the allowed ``moves'' in the persistence diagram.

\begin{definition}[Forbidden regions]
    For an off-diagonal pair $\bds \in \upbd(\mf)$, we say that:

    \begin{bracketenumerate}

        \item  \emph{Forbidden region for} $\dth\bds$ is defined as
        \[\fhR_{\mf}(\bds):= \bigcup\limits_{\substack{\bdt \homo \bds \\ \bdt \in \upbd_{n}(\mf)}}[-\infty, \mf(\bth\bdt)] \times [-\infty, \mf(\dth\bdt)] \;\;\cup \bigcup\limits_{\substack{\dth\bds \pathh{} \cx \\ \cx \in \crit(\cV_{\mf}) }}[-\infty,\mf(x)] \times[-\infty,\mf(x)].\]

        \item \emph{Forbidden region for} $\bth\bds$ is defined as
        \[\fcR_{\mf}(\bds) := \bigcup\limits_{\substack{\bdt \cohomo \bds \\ \bdt \in \upbd_{n}(\mf)}}[\mf(\bth\bdt), +\infty] \times [\mf(\dth\bdt), +\infty] \;\; \cup \bigcup\limits_{ \substack{\cy \pathh{} \bth\bds \\ \cy \in \crit(\cV_{\mf})}}[\mf(y),+\infty] \times[\mf(y),+\infty].\] 

    \end{bracketenumerate}
\end{definition}

\noindent Once we have the notion of forbidden regions, we can define a~set of safe transformations, which we call \emph{allowed moves}.

\begin{definition}[Allowed moves]
    Let $\mf$ be a $\dmf$, $\bds\in\upbd(\mf)$ and $\cc\in\bds$.
    A~\emph{pre-allowed move} of $\bds$ is a~new $\dmf$ $\omf$ such that:
    \begin{enumerate}[label=(\arabic*)]
        \item $\cV_{\mf} = \cV_{\omf}$ and for all $x\in\crit(\cV_\mf)\setminus\{c\}$ we have $\mf(x)=\omf(x)$,
        \item If $\cc$ is a~birth cell, then $\omf(\cc) > \mf(\cc) $; if $\cc$ is a~death cell, then $\omf(\cc) < \mf(\cc) $,
        \item\label{it:single-transposition} $\mf$-order and $\omf$-order restricted to $\crit(\cV_{\mf})$ differ by a single transposition at most.
    \end{enumerate}
   If a~pre-allowed move $\omf$ is such that $\bd(\mf) = \bd(\omf)$, then we say that $\omf$ is an \emph{allowed move}.
\end{definition}

\noindent A pre-allowed move pushes the pair $\alpha$ containing $\cc$ toward the diagonal either by increasing birth or decreasing death without affecting the vector field.
A single pre-allowed move bypasses at most one other critical cell. 
We note that multiple vectors can change their value and position in the $\omf$-order---as long as the gradient structure is preserved.
We will use the allowed moves to construct the homotopy bringing a persistent pair to the diagonal.

\begin{corollary}\label{cor:vectors_are_unimportant}
    If $\omf$ is a~pre-allowed move for $\mf$, then
    the change in persistence pairing can only result from transpositions of critical cells.
\end{corollary}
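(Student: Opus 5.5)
We reduce the pre-allowed move $\omf$ to a finite sequence of adjacent transpositions of cells in the filter, and check that only transpositions between two critical cells can alter the persistence pairing. The linear homotopy $f_t=(1-t)\mf+t\omf$ connects $\mf$ and $\omf$, and by condition (1) of a pre-allowed move we have $\cV_{\mf}=\cV_{\omf}$. By Theorem~\ref{thm:dmf-homotopy}, $\cV_{f_t}=\cV_{\mf}$ for all $t\in[0,1]$. As explained after that theorem, a sufficiently fine discretization of $[0,1]$ turns the passage from the $\mf$-order to the $\omf$-order into a finite sequence of orders in which consecutive ones differ by a single transposition. Since the induced partition never changes, each element of the partition is either a critical cell or a vector, and it moves as one block. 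Each elementary step is therefore one of three kinds: two vectors swap, a vector swaps with a critical cell, or two critical cells swap.

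We treat the first two kinds as follows. If a critical cell swaps with a vector, Proposition~\ref{prop:vectors_not_create_critical_relations} applies directly, since the vector field stays $\cV_{\mf}$. It tells us that the off-diagonal pairs and the relations between them are unchanged. If two vectors swap, we use the same argument as in the proof of that proposition. The Morse complex $\cM(\cV_{\mf})$ and the restricted order $\mf_{\cM}$ on critical cells do not change, so by Corollary~\ref{thm:morse_complex} the set $\upbd$ and the restricted matrices $\morsehmU,\morsecmU$ are unchanged. The diagonal pairs are exactly the vectors, $\vect(\cV_{\mf})=\diagbd$, and these are fixed as well. Hence neither kind of step alters the pairing.

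The only remaining steps are transpositions between two critical cells. By condition~\ref{it:single-transposition} there is at most one such step in the whole sequence. Any change in $\bd$ along the homotopy must therefore come from it, which is the claim.

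The main obstacle is the bookkeeping in the discretization. One must make sure that every critical--critical swap counted in the sequence really appears as a transposition of the $\mf$-order restricted to $\crit(\cV_{\mf})$. In particular, a vector bypassing critical cells should not be able to create a hidden swap between critical cells. This holds because the relative order of critical cells along the homotopy is monotone in $t$: $f_t$ is linear in $t$, so two critical cells swap at most once. Consequently the critical-cell order changes exactly by the transpositions separating the $\mf$-order from the $\omf$-order on $\crit(\cV_{\mf})$, and there is at most one of these.
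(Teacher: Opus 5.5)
Your proposal is correct and follows essentially the same route as the paper. Both arguments decompose the linear homotopy into transpositions, use Theorem~\ref{thm:dmf-homotopy} to keep $\cV_{\mf}$ (and hence the diagonal pairs) fixed, and invoke Proposition~\ref{prop:vectors_not_create_critical_relations} so that only critical--critical swaps can affect the pairing. You also spell out two points the paper leaves implicit: the vector--vector case, and the observation that linearity of $f_t$ together with condition~\ref{it:single-transposition} allows at most one critical--critical swap.
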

\begin{proof}
     The linear homotopy from $\mf$ to $\omf$ may be expressed as a~series of transpositions in $\mf$-order, given by specific times $t \in [0,1]$ and $\mf_{t}$-orders. 
    By Theorem~\ref{thm:dmf-homotopy}, the transpositions do not change the vector field, and thus, the diagonal pairs.
    Therefore, by Proposition~\ref{prop:vectors_not_create_critical_relations}, the change in persistence pairing can only result from transpositions of critical cells.
\end{proof}

 \noindent Observe that for an $X$ filtered by $\dmf \; \mf$, for every cell $x$ and interval $[a,b]$ such that $\mf(x) \in [a,b]$, we can find $t \in [0,1]$ such that $\mf(x) = at + (1-t)b$. We call it the \emph{linear coefficient of} $x$ on $[a,b]$.
We now show that, for a~fixed $\mf$, one can construct a~pre-allowed move that pushes the chosen birth-death pair to the right, and another one that pushes it downward.

\begin{restatable}[Increasing birth -- moving right]{proposition}{MoveRightProp}\label{prop:pre_allowed_move_right}
Let $X$ be filtered by a~$\dmf$ $\mf$. Let $\bds \in \upbd(\mf)_{k}$ be an off-diagonal pair, and $\delta,\xi$ be real values such that $\mf(\bth\bds) < \delta < \xi < \mf(\dth\bds)$, and there is at most one $\ce \in \crit(\cV_{\mf})$ such that $\mf(\ce) \in (\mf(\bth\bds),\delta)$. Additionally, assume that $\ce \notpathh{} \bth\bds$ and $\mf^{-1}([\delta,\xi]) = \emptyset$. Define

\[
\omf(x)=
\left\{
\begin{array}{c@{\quad}l}
t_x\delta+(1-t_x)\xi & \text{when }\mf(x)\in [\mf(\bth\bds),\xi] \text{ and } \cx \pathh{\cV_{\mf}}   \bth\bds \text{ and } x \not \in \crit(\cV_{\mf})\setminus\{\bth\alpha\},\\[6pt]
h(x) & \text{otherwise,}
\end{array}
\right.
\]
where $t_x$ is the linear coefficient of $x$ on the interval $[\mf(\bth\bds),\xi]$. Then $\omf$ is a~pre-allowed move of $\bds$ with respect to $\mf$.
    
\end{restatable}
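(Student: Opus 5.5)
The plan is to verify the three defining conditions of a pre-allowed move directly. Before that, we must check that $\omf$ is a~$\dmf$ with $\cV_{\omf}=\cV_{\mf}$.

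Let $S$ be the set of cells that are moved, i.e.\ those $x$ with $\mf(x)\in[\mf(\bth\bds),\xi]$, $x\pathh{}\bth\bds$, and $x$ either non-critical or equal to $\bth\bds$. On $S$ the map $\varphi(s)=t_s\delta+(1-t_s)\xi$ is the increasing affine bijection $[\mf(\bth\bds),\xi]\to[\delta,\xi]$. It satisfies $\varphi(s)\ge s$, and its image lies in $[\delta,\xi]$, which by assumption contains no value of $\mf$. Hence:
\begin{itemize}
\item $\omf$ is order preserving on $S$;
\item moved cells never acquire the value of an unmoved cell;
\item every preimage $\omf^{-1}(a)$ is either an old preimage or the image of an old preimage under $\varphi$.
\end{itemize}

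First we show that $S$ is a union of classes of $\cV_{\mf}$, i.e.\ that the two cells of a vector are either both moved or both unmoved. Take a vector $(u,w)$ with $u$ a facet of $w$. Both cells have the same value, so the value condition agrees for them. If $w\pathh{}\bth\bds$, then $u\pathh{}\bth\bds$ via the explicit arc $u\to w$. Conversely, suppose $u\pathh{}\bth\bds$ and $u\neq \bth\bds$. I would argue that the path must leave $u$ through $w$ if it is to return to dimension $\dim\bth\bds$ at a value $\ge\mf(\bth\bds)$. The reason is that descending through an implicit arc only lowers values, because of the weak monotonicity of $\mf$ along arcs. Once that holds, properties (ii) and (iii) of Definition~\ref{def:dmf} for $\omf$ follow immediately from the injectivity of $\varphi$ and the emptiness of $\mf^{-1}([\delta,\xi])$. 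The same facts give $\cV_{\omf}=\cV_{\mf}$.

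Weak monotonicity is the core step. Let $x$ be a facet of $y$. If both or neither of $x,y$ lie in $S$, monotonicity follows from the monotonicity of $\varphi$. If $y\in S$ and $x\notin S$, then $\omf(y)=\varphi(\mf(y))\ge\mf(y)\ge\mf(x)=\omf(x)$.

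The remaining case is $x\in S$ and $y\notin S$. If $\mf(y)>\xi$, then $\omf(x)\le\xi<\mf(y)$ and we are done. Otherwise $\mf(y)\in[\mf(x),\xi]$, and since $y\notin S$, either $y$ is critical or $y\notpathh{}\bth\bds$.
\begin{itemize}
\item If $(x,y)$ is a vector, this contradicts the first step.
\item Otherwise there is an implicit arc $y\to x$, so $y\pathh{}\bth\bds$. Then $y$ must be a critical cell other than $\bth\bds$ with value in $(\mf(\bth\bds),\xi]$. Since $\mf^{-1}([\delta,\xi])=\emptyset$, the only candidate is $\ce$. But $\ce\notpathh{}\bth\bds$ by hypothesis, a contradiction.
\end{itemize}
This case analysis is exactly where the hypotheses on $\ce$ and on the empty strip $[\delta,\xi]$ are consumed.

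It remains to check conditions (1)--(3) of a pre-allowed move.
\begin{enumerate}
\item The only critical cell in $S$ is $\bth\bds$, so all other critical values are unchanged.
\item $\omf(\bth\bds)=\varphi(\mf(\bth\bds))=\delta>\mf(\bth\bds)$, so the birth cell is pushed up.
\item Restricted to $\crit(\cV_{\mf})$, only $\bth\bds$ changes value, and it moves from $\mf(\bth\bds)$ to $\delta$. It can therefore pass at most the critical cells with values in $(\mf(\bth\bds),\delta)$, of which there is at most one, namely $\ce$. So the two orders on critical cells differ by at most one transposition.
\end{enumerate}

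I expect the main obstacle to be the first step, namely showing that vectors are not split by $S$ in the direction $u\pathh{}\bth\bds\Rightarrow w\pathh{}\bth\bds$. This needs a careful argument about paths that may pass through lower-dimensional cells. If this proves delicate, I would first try replacing the path condition by closure of $S$ under the vector partner, and then check that this modification leaves the monotonicity case analysis unchanged.
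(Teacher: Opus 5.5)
Your outline is the same as the paper's proof: show that the moved set $S$ is a union of classes of $\cV_{\mf}$, deduce $\cV_{\omf}=\cV_{\mf}$ and properties (ii)--(iii), check weak monotonicity by cases, then verify conditions (1)--(3). Your monotonicity case analysis is sound, including where the hypothesis on $\ce$ is used. The gap is the step you flagged yourself, and the justification you give for it is wrong.

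Paths in $G_{\cV}$ are not confined to dimensions $k$ and $k+1$, so $S$ can contain vectors of dimension $\ge k+1$. Suppose $u$ is a $(k+1)$-cell with $\bth\bds$ as a facet, paired with a $(k+2)$-cell $w$, and $\mf(\bth\bds)<\mf(u)<\delta$. Then $u\to\bth\bds$ is an implicit arc, so $u\in S$ via a path that does not start with $u\to w$. No value argument rules this out. For $\dim u=k$ your conclusion is true, but for a dimensional reason: a critical $k$-cell only has incoming arcs from $(k+1)$-cells, and a path that has dropped below dimension $k$ can only climb back up to upper cells of vectors. Nor can the equivalence follow from values alone, because it fails for non-critical targets. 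In a triangle with vectors $\{[p,q],[p,q,r]\}$, $\{p,[p,r]\}$, $\{q,[q,r]\}$, one has $[p,q]\pathh{}p$ but $[p,q,r]\notpathh{}p$. Your fallback of closing $S$ under vector partners has two problems. It changes $\omf$, so it would not prove the proposition as stated. It also breaks your case $x\in S$, $y\notin S$, which needs $x\pathh{}\bth\bds$ for every moved $x$.

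The missing idea is $\partial\partial=0$ combined with criticality of the target. The claim is: if $\{u,w\}$ is a vector with $u$ a facet of $w$, $a$ is critical, and $u\pathh{}a$, then $w\pathh{}a$. Induct on the length of a shortest path from $u$ to $a$.
\begin{itemize}
\item If its first arc is $u\to w$, you are done.
\item Otherwise the first arc is $u\to z$ with $z$ a facet of $u$. Since $\sum_c D(z,c)D(c,w)=0$ over $\ZZ_2$ and $D(z,u)D(u,w)=1$, there is $z''\neq u$ with $z$ a facet of $z''$ and $z''$ a facet of $w$. The arc $w\to z''$ is implicit, because $w$ is paired with $u$.
\item If $\{z,z''\}$ is not a vector (in particular if $z=a$), then $z''\to z$ is an arc, and $w\to z''\to z\pathh{}a$.
\item If $\{z,z''\}$ is a vector, then $z\neq a$, and the induction hypothesis applied to $\{z,z''\}$ gives $z''\pathh{}a$.
\end{itemize}
Apply this lemma with $a=\bth\bds$, and the rest of your proof goes through. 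The paper's proof relies on the same equivalence and states it without justification.
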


\proofappendix

\begin{figure}[H]   
  \centering
  \includegraphics[width=0.85\linewidth]{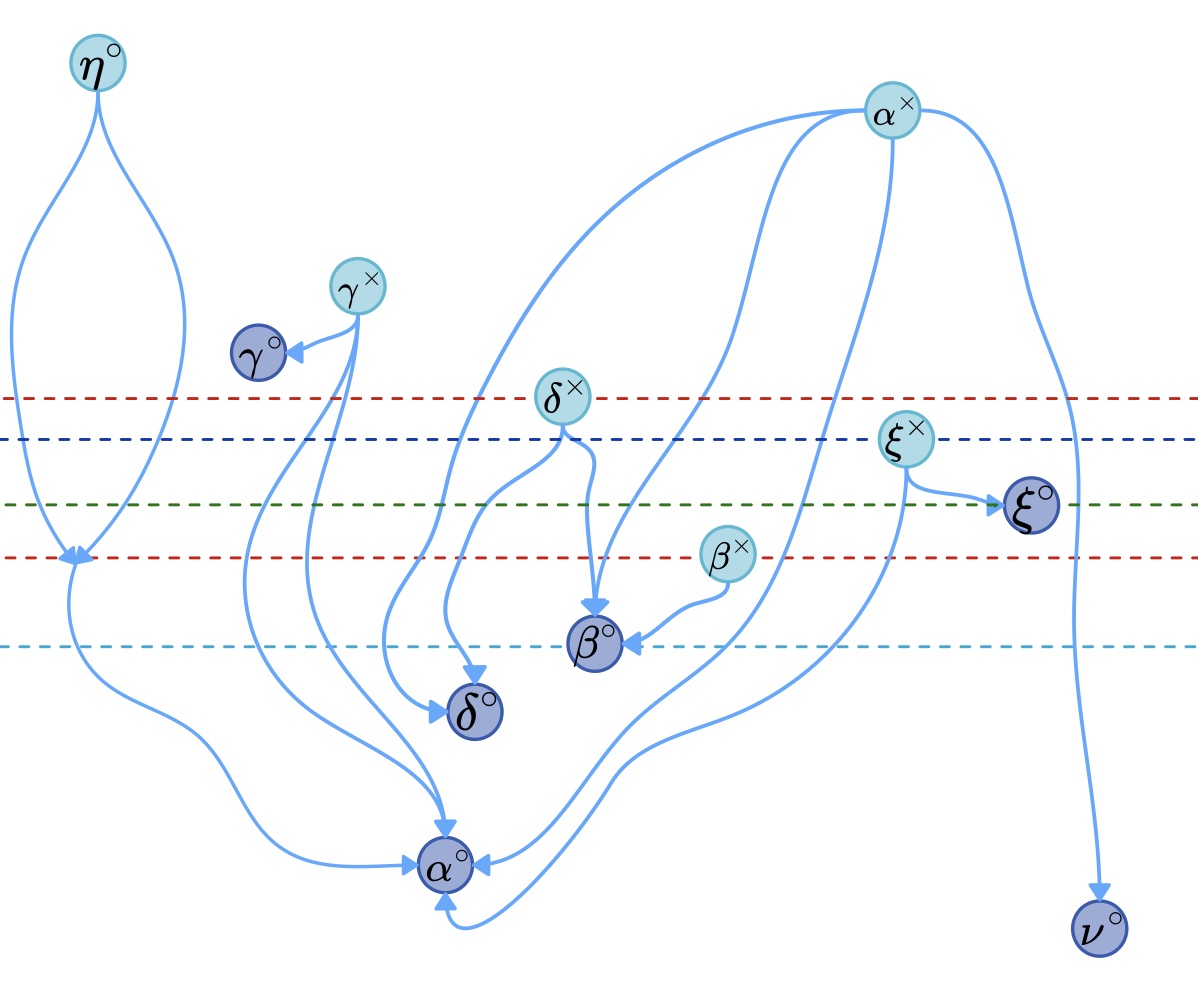}

  \vspace{6pt}

  \includegraphics[width=0.5\linewidth]{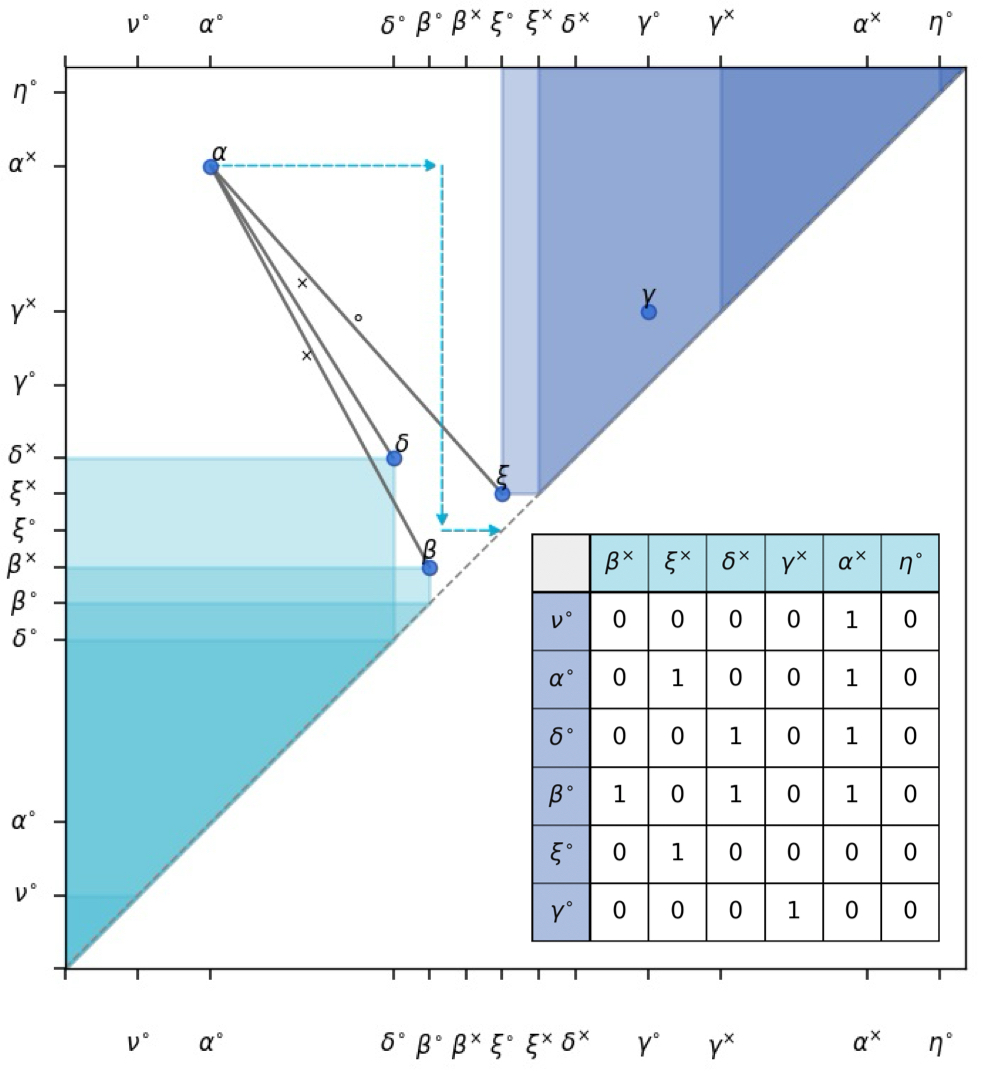}

  \caption{%
Top: Schematic picture of $\cV^{k}_{\mf}$. Node heights encode values of $\dmf$, critical cells are labeled by Greek letters with superscripts. Several important sublevels are highlighted with dashed lines.
Bottom: Boundary matrix and the persistence diagram of the Morse complex induced by $\cV^{k}_{\mf}$ with the two kinds of forbidden regions highlighted, and relations involving the birth-death pair $\bds$ shown as edges. The forbidden regions for $\dth\bds$ are shown in light blue; those for $\bth\bds$, in darker blue. The dashed arrows illustrate a possible homotopy, which moves the point to the diagonal.%
}
  \label{fig:vk_and_PD}
\end{figure}

\begin{restatable}[Decreasing death -- moving down]{proposition}{MoveDownProp}\label{prop:pre_allowed_move_down}

Let $X$ be filtered by $\dmf \; \mf$. Let $\bds \in \upbd(\mf)_{k}$ be an off-diagonal pair, and $\xi,\delta$ be real values such that  $ \mf(\bth\bds) < \xi < \delta < \mf(\dth\bds)$, and there is at most one $\ce \in \crit(\cV_{\mf})$ such that $\mf(\ce) \in (\delta, \mf(\dth\bds))$. Additionally, assume $\dth\bds \notpathh{} \ce$ and at the same time $\mf^{-1}([\xi,\delta]) = \emptyset$. 
Define 

\[
\omf(x)=
\left\{
\begin{array}{c@{\quad}l}
t_x\xi+(1-t_x)\delta & \text{when }\mf(x)\in [\xi,\mf(\dth\bds)] \text{ and } \dth\bds \pathh{\cV_{\mf}}   \cx \text{ and } x \not \in \crit(\cV_{\mf})\setminus\{\dth\alpha\},\\[6pt]
h(x) & \text{otherwise,}
\end{array}
\right.
\]
where $t_x$ is the linear coefficient of $x$ on the interval $[\xi,\mf(\dth\bds)]$. Then $\omf$ is a~pre-allowed move of $\bds$ with respect to $\mf$.
    
\end{restatable}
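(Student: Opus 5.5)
We treat this as the dual of Proposition~\ref{prop:pre_allowed_move_right}: reverse the order, swap boundary and coboundary, and swap $\bth\bds$ with $\dth\bds$. The plan is to check the three conditions of a pre-allowed move in order.

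First we record some preliminaries. Let $S=\{x : \mf(x)\in[\xi,\mf(\dth\bds)],\ \dth\bds\pathh{}x,\ x\notin\crit(\cV_\mf)\setminus\{\dth\bds\}\}$; these are the cells whose values change. Since $t_x\in[0,1]$ and $x\mapsto t_x$ is affine and increasing in $\mf(x)$, the map $\mf(x)\mapsto t_x\xi+(1-t_x)\delta$ is an increasing affine map of $[\xi,\mf(\dth\bds)]$ onto $[\xi,\delta]$. In particular $\omf(\dth\bds)=\delta<\mf(\dth\bds)$, which is condition (2). Only $\dth\bds$ among the critical cells changes value, which gives the second half of condition (1).

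The main step is to show that $\omf$ is a $\dmf$ with $\cV_\omf=\cV_\mf$.
\begin{itemize}
\item \emph{Vectors.} A vector $\{a,b\}$ with one end in $S$ has both ends in $S$. Indeed, if $a$ is reachable from $\dth\bds$ and $\{a,b\}$ is a vector, the explicit arc continues the path to the other end. Conversely, the end reached by an implicit arc is the one through which the path entered, and reaching it via its partner is also a path. Both ends have the same $\mf$-value, which lies in $[\xi,\mf(\dth\bds)]$. Neither end is a critical cell. Hence both get the same new value, so pairs are preserved.
\item \emph{Weak monotonicity.} Take $\hmB(x,y)=1$.
\begin{itemize}
\item If both $x,y\in S$, monotonicity holds because the map is increasing.
\item If $y\in S$ and $x\notin S$, then $y\pathh{}x$ along an implicit arc (or along the vector when $\{x,y\}$ is one). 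So $x$ fails to lie in $S$ only because $\mf(x)<\xi$ or because $x$ is critical.
\begin{itemize}
\item If $\mf(x)<\xi$, then $\omf(x)=\mf(x)<\xi\le\omf(y)$.
\item If $x$ is critical with $\mf(x)\ge\xi$, then $\mf^{-1}([\xi,\delta])=\emptyset$ forces $\mf(x)>\delta$. Since $x\ne\dth\bds$ has the wrong dimension for a cell inside the path between death levels, this means $x=\ce$ with $\dth\bds\pathh{}\ce$, contradicting the hypothesis. We must check here that the only critical cells with values in $(\delta,\mf(\dth\bds))$ are accounted for by $\ce$, and that critical cells with value exactly in $[\xi,\delta]$ do not exist.
\end{itemize}
\item If $x\in S$ and $y\notin S$, then $\omf(x)\le\mf(x)\le\mf(y)=\omf(y)$, since values only decrease.
\end{itemize}
\item \emph{Almost injectivity and pairing.} New values of distinct vectors, and of $\dth\bds$, stay distinct because the map is injective. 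New values lie in $[\xi,\delta]$, which contained no old values. Old values outside $S$ are unchanged. So no new coincidences arise.
\end{itemize}
Thus $\omf$ is a $\dmf$ and its preimages are exactly the old ones, so $\cV_\omf=\cV_\mf$. Alternatively, Theorem~\ref{thm:dmf-homotopy} gives the equality of vector fields along the linear homotopy once both endpoints are known to be $\dmf$s.

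Condition (3) follows from the orders. Among critical cells, only $\dth\bds$ moved, and it moved from $\mf(\dth\bds)$ down to $\delta$. The critical cells it passes are exactly those with values in $(\delta,\mf(\dth\bds))$, of which there is at most one, namely $\ce$. Hence the restricted orders differ by at most one transposition.

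The main obstacle I expect is the monotonicity case where $y\in S$ and $x$ is a critical face of $y$ with $\mf(x)>\delta$. This is precisely where the hypothesis $\dth\bds\notpathh{}\ce$ must be used, together with $\mf^{-1}([\xi,\delta])=\emptyset$, to rule out any critical descendant of $\dth\bds$ lying above $\delta$.
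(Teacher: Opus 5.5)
Your overall route is the paper's. The paper proves this proposition as the dual of Proposition~\ref{prop:pre_allowed_move_right}: it shows $h(x)=h(y)$ iff $h'(x)=h'(y)$, checks weak monotonicity in the same three cases, and reads off pairing, almost injectivity and the pre-allowed conditions. In the case of a critical face $x$ of a modified cell you are more careful than the paper, which never explicitly uses the hypothesis on $e$. You should add that $h(x)<h(\dth\bds)$ strictly, because two distinct critical cells cannot share a value.

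The step that does not hold up as written is the vector-closure claim, in the direction ``if the upper cell $b$ of a vector $\{a,b\}$ is reachable from $\dth\bds$, then so is $a$''. Your argument only covers paths that enter $b$ through the explicit arc $a\to b$. But $S$ contains every reachable cell in the window, in every dimension. For instance, take a $k$-cell $b$ with $h(b)>\delta$ that is a facet of $\dth\bds$ and the upper cell of a $(k-1)$-dimensional vector $\{a,b\}$. It is entered by the implicit arc $\dth\bds\to b$, and no arc leads from $b$ to $a$.

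This step carries real weight. If $a$ were unreachable, you would get $h'(b)<h(b)=h(a)=h'(a)$, which breaks both weak monotonicity and the vector.

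The missing ingredient is the relation $\sum_z D(a,z)D(z,u)=0$ over $\mathbb{Z}_2$. Suppose $b$ is entered from a cofacet $u$ by an implicit arc. Since $D(a,b)D(b,u)=1$, this relation gives some $c\neq b$ with $D(a,c)=D(c,u)=1$. Then:
\begin{itemize}
\item $c$ is reachable: by the implicit arc $u\to c$ if $\{c,u\}\notin\mathcal{V}_h$, and otherwise by induction on path length applied to the upper cell $u$ (the base case $u=\dth\bds$ is critical, so $\{c,u\}$ is not a vector there);
\item $c\to a$ is an implicit arc, because $a$ is paired with $b\neq c$.
\end{itemize}
Your argument does suffice for $(k+1)$-dimensional upper cells, since cells of dimension $k+2$ are unreachable from $\dth\bds$; the lower-dimensional vectors are where it fails.

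The paper's move-right proof also asserts this equivalence without proof, so your proof is no less complete than the paper's. Still, the justification you give does not establish it.
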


\proofappendix

\noindent Now observe that an allowed move of $\bds$ does not introduce new forbidden regions.

\begin{lemma}\label{lem:we_can_only_lose_regions}
    Let $\omf$ be an allowed move of $\bds \in \upbd(\mf)$. Then $\fhR_{\omf}(\bds) \subset \fhR_{\mf}(\bds)$ and $\fcR_{\omf}(\bds) \subset \fcR_{\mf}(\bds)$.
\end{lemma}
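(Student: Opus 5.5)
The plan is to compare the two unions that define each forbidden region term by term. The allowed move $\omf$ keeps the vector field and the pairing, and it changes the value of only one critical cell $\cc\in\bds$, moving it toward the diagonal. We show that every generating rectangle of $\fhR_{\omf}(\bds)$ is also a generating rectangle of $\fhR_{\mf}(\bds)$, and likewise for $\fcR$. The two terms of each region are treated separately: the gradient-path term and the relation term.

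\textbf{Gradient-path term.} Since $\cV_{\omf}=\cV_{\mf}$, the graph $G_{\cV}$ is unchanged. Hence the sets $\{\cx\in\crit(\cV_{\mf}) : \dth\bds\pathh{}\cx\}$ and $\{\cy\in\crit(\cV_{\mf}) : \cy\pathh{}\bth\bds\}$ are the same for $\mf$ and $\omf$. For any such $\cx$ we have $\cx\neq\dth\bds$, because the graph is acyclic, and $\cx\neq\bth\bds$ cannot move unless $\cc=\bth\bds$. We therefore need $\omf(\cx)=\mf(\cx)$. This holds for every critical $\cx\neq\cc$ by condition (1) of a pre-allowed move. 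In the remaining case $\cx=\cc=\bth\bds$, a path $\dth\bds\pathh{}\bth\bds$ exists and $\cc$ is a birth cell. The value then increases, so the square $[-\infty,\omf(\cx)]^2$ grows. To rule this out we use the observation that increasing $\bth\bds$ past $\mf(\bth\bds)$ does not enlarge the birth-side picture in the relevant way. More precisely, the square $[-\infty,\omf(\bth\bds)]^2$ contains the point $(\mf(\bth\bds),\cdot)$ only below the diagonal level. I expect this case to need a separate check. It may be simplest to note that $\fhR$ is used only above the diagonal near $\bds$, or to argue that a path $\dth\bds\pathh{}\bth\bds$ is already among those considered. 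The birth-side region is symmetric.

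\textbf{Relation term.} This is the main step. Suppose $\bdt\homo\bds$ in $\omf$. By condition (3) of a pre-allowed move, together with Corollary~\ref{cor:vectors_are_unimportant} and Proposition~\ref{prop:vectors_not_create_critical_relations}, the passage from $\mf$ to $\omf$ amounts, at the level of critical cells, to at most one transposition, namely of $\cc$ with some other critical cell. Since $\omf$ is allowed, no switch occurs. By Proposition~\ref{prop:we_can_only_lose}, a transposition that increases $\bth\bds$ or decreases $\dth\bds$ without a switch cannot create a relation $\bdt\rel\bds$. Hence $\bdt\homo\bds$ already held in $\mf$. The type of the relation is also preserved, which follows from Theorems~\ref{thm:death_transposition}, \ref{thm:birth_transposition} and~\ref{thm:mixed_transposition}, since only row $\bth\bdt$ or $\dth\bdt$ changes, and only by adding a row of $\bds$. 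If $\bdt\neq\bds$, then $\bdt$ contains no moved critical cell, so its coordinates $(\omf(\bth\bdt),\omf(\dth\bdt))=(\mf(\bth\bdt),\mf(\dth\bdt))$ are unchanged. Its rectangle is therefore the same, and it lies in $\fhR_{\mf}(\bds)$. The same argument with $\cohomo$ handles $\fcR$.

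The main obstacle is making sure that Proposition~\ref{prop:we_can_only_lose} covers relations of both types and in both directions. This matters because a birth transposition modifies $\hmU$ rows while a death transposition modifies $\cmU$ rows. Concretely, I would check row-update formulas \eqref{eq:death_death_update} and \eqref{eq:birth_birth_update}: a new entry in column $\bth\bds$ or $\dth\bds$ could appear only via an added row indexed by the moving cell itself, which corresponds to $\bds\rel\cdot$ and not $\cdot\rel\bds$.
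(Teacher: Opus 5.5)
You split the argument into a gradient-path term and a relation term. The first you handle with $\cV_{\omf}=\cV_{\mf}$; the second with Proposition~\ref{prop:we_can_only_lose} and the fact that only one critical cell moves. This is exactly the paper's one-line argument. The gap is the case you flag but leave open.

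\textbf{The self-generated square.} When $\bds$ is reversible, $\bth\bds$ is itself a generator $\cx$ with $\dth\bds\pathh{}\cx$. Hence $\fhR_{\mf}(\bds)\supseteq[-\infty,\mf(\bth\bds)]^2$, and a move to the right genuinely enlarges this square. Dually, $\dth\bds$ generates $[\mf(\dth\bds),+\infty]^2$ inside $\fcR_{\mf}(\bds)$, and that square grows under a move down.

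Neither of your suggested remedies can work, because in this case the inclusion is false as literally stated. Take a single edge $e$ with vertices $u,v$ and $\mf(u)=0$, $\mf(v)=1$, $\mf(e)=2$.
\begin{itemize}
\item Then $\bds=(v,e)$ is reversible, there are no relations, and $\fhR_{\mf}(\bds)=[-\infty,0]^2\cup[-\infty,1]^2=[-\infty,1]^2$.
\item Setting $\omf(v)=1.5$ is an allowed move: all cells are critical, no critical cell is bypassed, and the pairing is unchanged.
\item Yet $\fhR_{\omf}(\bds)=[-\infty,1.5]^2\not\subset\fhR_{\mf}(\bds)$.
\end{itemize}
The difference contains points above the diagonal, such as $(1,1.5)$, so restricting attention to the region above the diagonal does not help.

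\textbf{What is missing.} You need an explicit convention that the path-generators exclude the components of $\bds$ itself. The paper's phrase ``we are changing the value of only one component of $\bds$'' tacitly assumes this. With that convention the path term is literally unchanged, and your argument closes.

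The convention is also harmless for Theorem~\ref{thm:when_you_can_move_to_diagonal}. The square $[-\infty,\mf(\bth\bds)]^2$ misses every generator of $\fcR_{\mf}(\bds)$: their first coordinates strictly exceed $\mf(\bth\bds)$, either by strict monotonicity of $\mf$ along gradient paths between distinct critical cells or by Observation~\ref{obs:left_up_corner}. Dually, $[\mf(\dth\bds),+\infty]^2$ misses every generator of $\fhR_{\mf}(\bds)$. So these self-squares never contribute to $\fhR\cap\fcR$.

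\textbf{A smaller inaccuracy in the relation step.} Your closing justification is wrong in both cases.
\begin{itemize}
\item If the other pair satisfies $\mf(\dth\bdt)<\mf(\dth\bds)$, then \eqref{eq:birth_birth_update} adds row $\dth\bds$ into row $\dth\bdt$. This toggles precisely the entry $\hmU[\dth\bdt,\dth\bds]$, that is, a potential relation $\bdt\homo\bds$, not one of the form $\bds\rel\cdot$.
\item If $\mf(\dth\bds)<\mf(\dth\bdt)$, it is the row of $\bds$ that changes, not a row of $\bdt$.
\end{itemize}
The content of Proposition~\ref{prop:we_can_only_lose} is that, absent a switch, the toggled entry in the first case was already $1$. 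In the second case the added entry is $0$ by upper triangularity.

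Since your main step cites the proposition, this does not break the proof. Preservation of the relation type is better justified directly: birth--birth transpositions leave $\cmU$ unchanged and death--death transpositions leave $\hmU$ unchanged (Theorems~\ref{thm:birth_transposition} and~\ref{thm:death_transposition}).
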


\begin{proof}
    The statement follows directly from Proposition~\ref{prop:we_can_only_lose} and the fact that $\cV_{\mf} = \cV_{\omf}$, and we are changing the value of only one component of $\bds$.
\end{proof}

\noindent It follows that if we know the initial forbidden regions, we can design a sequence of allowed moves that brings $\bds$ arbitrarily close to the diagonal.

\begin{theorem}\label{thm:when_you_can_move_to_diagonal}
    Let $X$ be filtered by a~$\dmf$ $\mf_{1}$ and $\bds \in \upbd_{k}(\mf)$ be such that $\fcR_{\mf_1}(\bds) \cap \fhR_{\mf_1}(\bds) = \emptyset$, then there exists a~sequence of $\dmf$s $\mf_{1}, \mf_{2}, \ldots, \mf_{n}$ such that $\mf_{i+1}$ is an allowed move of $\bds \in \bd(\mf_{i})$, and $\mf_{n}(\dth\bds) - \mf_{n}(\bth\bds)$ is arbitrarily small. 
\end{theorem}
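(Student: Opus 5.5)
Throughout I write $\mf$ for $\mf_1$ and identify $\bds$ with the point $p=(\mf(\bth\bds),\mf(\dth\bds))$. The two forbidden regions have a simple shape: $\fhR_{\mf}(\bds)$ is a union of lower-left quadrants and is a down-closed staircase, while $\fcR_{\mf}(\bds)$ is a union of upper-right quadrants and is an up-closed staircase. Disjointness should leave a monotone staircase corridor from $p$ to the diagonal. The plan is to follow this corridor with a staircase path that goes right and down, to realize each step by a pre-allowed move from Proposition~\ref{prop:pre_allowed_move_right} or~\ref{prop:pre_allowed_move_down}, and to check that each step is in fact an allowed move. Lemma~\ref{lem:we_can_only_lose_regions} makes the regions only shrink along the way, so the original disjointness can be reused at every step.

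\textbf{Step 1 (the corridor).} First check that $p$ itself lies in neither region. Observation~\ref{obs:left_up_corner} gives that related pairs lie strictly up and to the left of $p$. Gradient paths force $\mf(x)<\mf(\dth\bds)$ when $\dth\bds\pathh{}x$, and $\mf(y)>\mf(\bth\bds)$ when $y\pathh{}\bth\bds$. Now let $b$ range over $[\mf(\bth\bds),\mf(\dth\bds)]$ and set
\[
d(b)=\sup\{d : (b,d)\in\fhR_{\mf}(\bds)\}.
\]
Because the regions are disjoint, the vertical segment from the point $(b,\max(d(b),b)+\varepsilon)$ up to where it meets $\fcR$ stays clear of both regions. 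The target is a right/down staircase path whose horizontal steps keep the death coordinate above $\fhR$. The vertical steps must keep the birth coordinate to the left of $\fcR$; since $\fcR$ is up-closed, at height $d$ this means staying left of its boundary. The path ends within $\varepsilon$ of the diagonal. Only finitely many critical values occur, so the path can be chosen with finitely many corners, and between corners the moving coordinate passes one critical value at a time. Any vector levels $[\delta,\xi]$ that get in the way can be sidestepped by choosing $\delta,\xi$ in gaps of $\mf$.

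\textbf{Step 2 (each step is pre-allowed).} For a rightward step past a critical cell $\ce$, I need $\ce\notpathh{}\bth\bds$ to apply Proposition~\ref{prop:pre_allowed_move_right}. If instead $\ce\pathh{}\bth\bds$, then the quadrant $[\mf(\ce),\infty]^2$ lies in $\fcR$. The step would then put the point into that quadrant, because the death value $\ge\mf(\ce)$ holds since we stay above the diagonal. That contradicts the choice of path. Downward steps are handled symmetrically with Proposition~\ref{prop:pre_allowed_move_down}.

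\textbf{Step 3 (no switch), the main obstacle.} Here I must show that each pre-allowed move keeps $\bd$ unchanged. By Corollary~\ref{cor:vectors_are_unimportant}, only the single transposition of critical cells matters, and Corollary~\ref{cor:bd_pairs} with Theorem~\ref{thm:mixed_transposition} narrows it to two cases. The first is a birth--birth transposition of $\bth\bds$ with $\bth\bdt$, which by Theorem~\ref{thm:birth_transposition} switches exactly when $\bdt\cohomo\bds$. The second is a death--death transposition, which by Theorem~\ref{thm:death_transposition} switches exactly when $\bdt\homo\bds$. In the first case $\bdt\cohomo\bds$ would put the new point in the quadrant $[\mf(\bth\bdt),\infty]\times[\mf(\dth\bdt),\infty]\subset\fcR$; this uses $\mf(\dth\bdt)<$ the current death, by Observation~\ref{obs:left_up_corner}. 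The second case is symmetric and lands in $\fhR$. The delicate point is that the relations must be those of the current function $\mf_i$, not of $\mf_1$. Lemma~\ref{lem:we_can_only_lose_regions}, applied inductively, gives $\fcR_{\mf_i}\subset\fcR_{\mf_1}$ and $\fhR_{\mf_i}\subset\fhR_{\mf_1}$, and the path was chosen against the $\mf_1$-regions, so the argument goes through at every step. Finally, concatenating the moves gives $\mf_n$ whose point lies within $\varepsilon$ of the diagonal, with $\varepsilon$ arbitrary.
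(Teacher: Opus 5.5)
Your Steps 2 and 3 are correct, and they are the same case analysis the paper uses. Bypassing the next critical cell $\ce$ on the birth side either fails to be pre-allowed or causes a switch only if $\ce\pathh{}\bth\bds$ or $\ce\cohomo\bth\bds$. In both cases the new point would lie in $\fcR_{\mf_i}(\bds)\subset\fcR_{\mf_1}(\bds)$, and the death side is symmetric. There is a slip in Step 1: by Observation~\ref{obs:left_up_corner}, a pair $\bdt\rel\bds$ lies strictly \emph{below and to the right} of $p$, not up and to the left. Step 3 in fact uses the correct direction.

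The gap is Step 1, which carries all the global content of the theorem. You need a right/down monotone path from $p$ that stays above the diagonal, avoids $\fhR_{\mf_1}(\bds)\cup\fcR_{\mf_1}(\bds)$, and ends within $\varepsilon$ of the diagonal. You only say disjointness ``should'' give one, and the concrete claim you offer is false as stated. If $(b,b)\in\fcR$, for instance $b\ge\mf(y)$ for a critical $y\pathh{}\bth\bds$, then the starting point $(b,\max(d(b),b)+\varepsilon)$ already lies in $\fcR$. So you cannot descend to the diagonal at an arbitrary abscissa, and you never say where the path ends or why a monotone one exists.

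This can be repaired as follows:
\begin{itemize}
\item The diagonal points of $\fhR$ form a down-closed set and those of $\fcR$ an up-closed set, and the two are disjoint.
\item Since $p\notin\fhR\cup\fcR$, there is some $a\in(\mf(\bth\bds),\mf(\dth\bds))$ with $(a,a)$ in neither region.
\item Set $g(b)=\min\{d:(b,d)\in\fcR\}$. Follow the curve $b\mapsto\min(\mf(\dth\bds),g(b)-\eta)$ on $[\mf(\bth\bds),a]$, then descend vertically at $b=a$.
\item For small $\eta>0$ this avoids both regions, because both boundaries are non-increasing step functions with a positive gap between them.
\end{itemize}

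The paper avoids global planning altogether with a greedy argument. Let $\cx,\cy$ be the critical cells adjacent to $\bth\bds$ and $\dth\bds$ in the current critical order. If neither can be bypassed, then ($\cx\pathh{}\bth\bds$ or $\cx\cohomo\bth\bds$) and ($\dth\bds\pathh{}\cy$ or $\cy\homo\dth\bds$). The quadrants these generate intersect because $\mf(\cx)\le\mf(\cy)$, using Corollary~\ref{cor:bd_pairs} and Observation~\ref{obs:left_up_corner} in the relational cases. Via Lemma~\ref{lem:we_can_only_lose_regions}, this contradicts the disjointness at $\mf_1$. Iterating until $\bth\bds$ and $\dth\bds$ are adjacent, and then making one final move, gives the result.
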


\begin{proof}

    We begin by showing that we are able to construct from $\mf_{i}$ an allowed move $\mf_{i+1}$ such that $\bth\bds$ and $\dth\bds$ are closer in  $\mf_{i+1}$-order than in $\mf_{i}$-order, where both orders are restricted to the critical cells. Due to Corollary~\ref{cor:vectors_are_unimportant}, to show that $\bd(\mf_{i}) = \bd(\mf_{i+1})$, we can focus only on transpositions between critical cells. Define $\cx$ and $\cy$ as critical cells such that in $\mf_{i}$-order, we have $\bth\bds <_{\mf_{i}} \cx <_{\mf_{i}} ... <_{\mf_{i}} \cy <_{\mf_{i}} \dth\bds $. 

    \noindent If $\cx \notpathh{} \bth\bds$, then if $\cx$ is a~death cell or $\cx \nocohomo \bth\bds$, we use Proposition~\ref{prop:pre_allowed_move_right} to construct $\mf_{i+1}$, which increases the value of $\bth\bds$ with the values of $\delta$ and $\xi$ in the proposition larger than $\mf_{i}(\cx)$. Analogously, if $\dth\bds \notpathh{} \cy $, then if $\cy$ is a~birth cell or $\cy \nohomo \dth\bds$, then use Proposition~\ref{prop:pre_allowed_move_down} to construct $\mf_{i+1}$, which decreases the value of $\dth\bds$ to bypass $\cy$. From Theorems~\ref{thm:death_transposition},
    \ref{thm:birth_transposition} and \ref{thm:mixed_transposition}, we get that these are indeed allowed moves.

    \noindent If $\cx \pathh{} \bth\bds$ and $\dth\bds \pathh{} \cy$, then $\cx$ generates forbidden regions bounded by a~vertical line and $\cy$ generates  forbidden region bounded by a~horizontal line. Because $\cx <_{\mf_{i}} \cy$, they intersect.
    We get the same argument if $\cx \pathh{} \bth\bds$ and $\cy \homo \dth\bds$,  or $\dth\bds \pathh{} y$ and $\cx \cohomo \bth\bds$. If $\cx \cohomo \bth\bds$ and $\cy \homo \dth\bds$, then due to Corollary~\ref{cor:bd_pairs}, there has to exist $(x,\dth\bdt), (\bth\bdb,y) \in \bd_{k}(\mf_i)$. It follows from Observation~\ref{obs:left_up_corner} that they are in the bottom-right quadrant of the pair $\bds$. Therefore, they generate forbidden $\dth\bds$ and $\bth\bds$ regions, which intersect.

    \noindent It follows from Lemma~\ref{lem:we_can_only_lose_regions} that if $\fhR_{\mf_{i}}(\bds)\cap\fcR_{\mf_{i}}(\bds) \neq \emptyset$, then $\fhR_{\mf_{1}}(\bds)\cap\fcR_{\mf_{1}}(\bds) \neq \emptyset$.

    \noindent Accordingly, we can construct a~series of allowed moves, such that $\mf_{j}$ is the last one, and $\bth\bds$ and $\dth\bds$ are consecutive in the $\mf_{j}$-order restricted to the critical cells. Then, by Proposition~\ref{prop:pre_allowed_move_right}, we construct a~final pre-allowed move such that the value gap between the cells of $\bds$ can be made arbitrarily small. Since no critical cell is bypassed  during this deformation, the move is allowed.
\end{proof}

\subsection{Reversing the path}\label{sec:reversing_the_path}

In the previous subsection, we showed that if the forbidden regions of the birth and the death cell of a~(reversible) pair $\bds$ do not intersect, then we can reduce its lifetime arbitrarily close to zero. 
In particular, we can make it a critical shallow pair. It follows from Theorem~\ref{thm:cancelation_is_path_reversing} that we can safely---that is, without introducing changes in the pairing or in the relations---reverse the path between the components of $\bds$. 
The reversal is the final step of the construction, which corresponds to $\alpha$ entering the diagonal.
To make the homotopy fully explicit we construct the final $\dmf$ inducing the vector field with reversed path.

\begin{restatable}{proposition}{RevPathProp}\label{prop:reversing_path}
        Let $\bds \in \bd_{n}(\mf)$ be a~reversible pair such that the unique path between $\dth\bds$ and $\bth\bds$ is $\rho$. If $\mf^{-1}([\mf(\bth\bds),\mf(\dth\bds)]) = \rho = (\bth\bds = x_0, x_1, x_2, \ldots , x_m = \dth\bds)$, then the function $\omf$, defined as
    \[
\omf(x)=
\left\{
\begin{array}{c@{\quad}l}
 \mf(x_{m - 2 \lfloor i/2 \rfloor})& \text{when } x \in \rho \text{ and } x = x_i,\\[6pt]
h(x) & \text{otherwise,}
\end{array}
\right.
\]
is a~$\dmf$ which generates $\cV^{-\rho}$ and does not change the value of the critical cells other than the components of $\bds$.
\end{restatable}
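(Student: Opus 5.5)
The plan is to verify the three conditions of Definition~\ref{def:dmf} for $\omf$ directly, and then read off the induced vector field. I first record the structure of $\rho$.

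\textbf{Structure of $\rho$.} Since $\rho$ runs from $\dth\bds$ (dimension $n+1$) to $\bth\bds$ (dimension $n$) and alternates dimensions, $m$ is odd. The odd-indexed cells have dimension $n+1$ and the even-indexed cells have dimension $n$. The steps $x_{2j+1}\to x_{2j}$ are implicit arcs, and the steps $x_{2j}\to x_{2j-1}$ are explicit arcs. Hence the vectors of $\cV_\mf$ along $\rho$ are exactly the pairs $\{x_{2j-1},x_{2j}\}$ for $1\le j\le (m-1)/2$, with $\mf(x_{2j-1})=\mf(x_{2j})$.

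Along the path, $\mf$ satisfies
\[
\mf(x_0)<\mf(x_1)=\mf(x_2)<\mf(x_3)=\dots<\mf(x_m).
\]
Unwinding the formula for $\omf$ gives $\omf(x_{2j})=\omf(x_{2j+1})=\mf(x_{m-2j})$ for $0\le j\le (m-1)/2$. So $\omf$ pairs $x_{2j}$ with $x_{2j+1}$, which is precisely the reversed matching. The common values of these pairs are pairwise distinct, strictly decreasing in $j$, and drawn from $\{\mf(x_1),\mf(x_3),\dots,\mf(x_m)\}\subset[\mf(\bth\bds),\mf(\dth\bds)]$.

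\textbf{Pairing and almost injectivity.} By hypothesis, no cell outside $\rho$ takes a value in $[\mf(\bth\bds),\mf(\dth\bds)]$, and cells outside $\rho$ keep their values. Therefore every level set of $\omf$ is either an old level set outside the interval, or one of the new pairs $\{x_{2j},x_{2j+1}\}$. Each new pair is a facet--cofacet pair, since $x_{2j}$ is a facet of $x_{2j+1}$ by the implicit arc. This gives conditions (ii) and (iii).

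\textbf{Weak monotonicity.} This is the main step, and I split it into two cases.

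\emph{A boundary relation between $z\notin\rho$ and a cell of $\rho$.} If $z$ is a facet of some $x_i$, then $\mf(z)\le\mf(x_i)\le \mf(x_m)$ by the old monotonicity. Since $z$ lies outside the interval, $\mf(z)<\mf(x_0)\le\omf(x_i)$. Symmetrically, if $z$ is a cofacet of some $x_i$, then $\mf(z)>\mf(x_m)\ge\omf(x_i)$.

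\emph{Both cells in $\rho$.} Here $x_{2j}$ is a facet of $x_{2l+1}$, and I need $\omf(x_{2j})\le\omf(x_{2l+1})$. Because the new values decrease in the pair index, this holds exactly when $j\ge l$. This is the step I expect to be the real obstacle, and I would settle it using uniqueness of $\rho$.

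Suppose $j<l$. Then the facet relation gives an arc $x_{2l+1}\to x_{2j}$ in $G_{\cV_\mf}$. This arc is not explicit, since the vectors along $\rho$ are $\{x_{2i-1},x_{2i}\}$, so $x_{2j}$ is paired with $x_{2j-1}$ and not with $x_{2l+1}$. It is also not the path step $x_{2l+1}\to x_{2l}$, because $j\neq l$. Consider the path
\[
x_m\to\dots\to x_{2l+1}\to x_{2j}\to\dots\to x_0 .
\]
It is a path from $\dth\bds$ to $\bth\bds$ that differs from $\rho$, which contradicts reversibility. Hence $j\ge l$, and monotonicity holds. Dimension reasons exclude all other incidences inside $\rho$.

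\textbf{Conclusion.} The function $\omf$ is a $\dmf$. Its level sets are the old vectors and critical cells outside $\rho$, together with the pairs $\{x_{2j},x_{2j+1}\}$. So $\cV_{\omf}=\cV^{-\rho}$, and every critical cell other than $\bth\bds,\dth\bds$ retains its value.
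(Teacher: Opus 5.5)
Your proof is correct and follows the paper's route: pairing and almost injectivity come from the hypothesis $h^{-1}([h(\alpha^{\circ}),h(\alpha^{\times})])=\rho$, and weak monotonicity is split according to whether the two incident cells lie on $\rho$. Where you go beyond the paper is the case with both cells on $\rho$, which the paper dismisses as following ``from the construction'': your argument that $x_{2j}$ being a facet of $x_{2l+1}$ with $j<l$ would give a second path from $\alpha^{\times}$ to $\alpha^{\circ}$ supplies the missing justification, and reversibility really is needed there (for $m=3$, if $x_0$ were also a facet of $x_3$, you would get $h'(x_0)=h(x_3)>h(x_1)=h'(x_3)$).
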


\section{Final algorithm and summary}\label{sec:algorithm}

\subsection{Final algorithm}

We summarize the entire construction in the form of an algorithm that produces a~topological simplification of a given $\dmf$.
\vspace{1ex}

\noindent
\textbf{Input:} A Lefschetz complex $X$ filtered by a~$\dmf$ $\mf$; the combinatorial vector field $\cV_{\mf}$; the set $\bd(\mf)$ of birth–death pairs; all homology/cohomology relations among the off-diagonal pairs;
and a~reversible, $k$-dimensional birth-death pair $\bds$, such that $\fhR_{\mf}(\bds) \cap \fcR_{\mf}(\bds)
  = \emptyset. $


\begin{bracketenumerate}
    

    \item Following the procedure described in Theorem~\ref{thm:when_you_can_move_to_diagonal}, move $\bds$ so close to the diagonal that $\mf^{-1}((\mf(\bth\bds),\mf(\dth\bds)) = \rho$, where $\rho$ is a~unique path between the components of $\bds$. During this process, update the relations between the critical cells of $\cV_{\mf}$ using Theorems \ref{thm:death_transposition}, \ref{thm:birth_transposition}.\label{step:move}
    
    \item Reverse the path $\rho$ between $\dth\bds$ and $\bth\bds$ in the vector field, constructing a~new $\dmf$ as described in Proposition \ref{prop:reversing_path}.\label{step:reverse}

\end{bracketenumerate}

\noindent
\textbf{Output:} A Lefschetz complex $X$ filtered by a~$\dmf$ $\omf$; the combinatorial vector field $\cV_{\omf}$; the set $\bd(\omf)$ of birth-death pairs; all homology/cohomology relations among the off-diagonal pairs.

\noindent Figure~\ref{fig:result} illustrates an example of a topological simplification obtained by this procedure.

\begin{theorem}\label{thm:algorithm_works}
    The algorithm above returns a~topological simplification $\omf$ for a~$\dmf$ $\mf$.
    Moreover, the output of the algorithm contains the updated vector field and homology/cohomology relations for $\omf$.
\end{theorem}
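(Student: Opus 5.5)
The plan is to verify the two steps of the algorithm separately and then compose them: the function produced at the end of Step~\ref{step:move}, call it $\mf_j$, and the final function $\omf$ from Step~\ref{step:reverse}.

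\textbf{Step~\ref{step:move}.} Theorem~\ref{thm:when_you_can_move_to_diagonal} applies because $\fhR_{\mf}(\bds)\cap\fcR_{\mf}(\bds)=\emptyset$. It produces a sequence of allowed moves $\mf=\mf_1,\dots,\mf_j$. By the definition of an allowed move, each step satisfies $\cV_{\mf_{i+1}}=\cV_{\mf_i}$, $\bd(\mf_{i+1})=\bd(\mf_i)$, and $\mf_{i+1}$ agrees with $\mf_i$ on every critical cell except the moved component of $\bds$. By induction, $\mf_j$ has the same vector field and the same pairing as $\mf$, and agrees with $\mf$ on $\crit(\cV_\mf)\setminus\{\bth\bds,\dth\bds\}$.

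At each move at most one critical transposition occurs (item~\ref{it:single-transposition}). By Corollary~\ref{cor:vectors_are_unimportant} and Proposition~\ref{prop:vectors_not_create_critical_relations}, transpositions involving vectors can be ignored. Each critical transposition is handled by Theorems~\ref{thm:death_transposition}, \ref{thm:birth_transposition} and~\ref{thm:mixed_transposition}, which give the exact row update of $\hmU$ or $\cmU$. The relations maintained by the algorithm are therefore exactly the relations of $\mf_j$.

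We also need to arrange $\mf_j^{-1}([\mf_j(\bth\bds),\mf_j(\dth\bds)])=\rho$. Once $\bth\bds$ and $\dth\bds$ are consecutive among critical cells, take the final move of Proposition~\ref{prop:pre_allowed_move_right} with $\delta,\xi$ chosen so that only cells on paths into $\bth\bds$ lie in the gap. The cells of $\rho$ are exactly the non-critical cells reachable between the two components. Uniqueness of the path, together with the gradient structure, should then force the preimage of the interval to be $\rho$, possibly after one more squeeze by Proposition~\ref{prop:pre_allowed_move_down}. I expect this to be the delicate point: one must check that no other vector is trapped in the interval without lying on $\rho$. The remedy I would try is to push every such vector outside using the same linear-coefficient construction, since it neither lies on a path into $\bth\bds$ nor on a path out of $\dth\bds$.

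\textbf{Step~\ref{step:reverse}.} With $\bth\bds,\dth\bds$ consecutive among critical cells, no off-diagonal pair $\bdt$ can satisfy $\bdt\rel\bds$. By Observation~\ref{obs:left_up_corner}, such a $\bdt$ would need a critical cell strictly between $\bth\bds$ and $\dth\bds$. Hence $\bds$ is critically shallow, i.e.\ shallow in $\cM(\cV_{\mf_j})$ by Corollary~\ref{thm:morse_complex}.

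Proposition~\ref{prop:reversing_path} gives a $\dmf$ $\omf$ with $\cV_{\omf}=\cV^{-\rho}$ that agrees with $\mf_j$ off the components of $\bds$. By Theorem~\ref{thm:cancelation_is_path_reversing}, $\cM(\cV_{\omf})$ is the quotient of $\cM(\cV_{\mf_j})$ after cancelling $\bds$. Since $\omf$ and $\mf_j$ coincide on the surviving critical cells, the filtration order on the quotient is the restricted one.

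Theorem~\ref{thm:cancelation} then gives $\bd(\omf_\cM)=\bd(\mf_{j,\cM})\setminus\{\bds\}$. Via Corollary~\ref{thm:morse_complex}, this becomes $\upbd(\omf)=\upbd(\mf)\setminus\{\bds\}$, with all birth and death values of the remaining pairs unchanged. This is precisely Definition~\ref{def:topological_simplification}.

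Finally, Theorem~\ref{thm:lefschetz_is_safe} (equivalently Observation~\ref{obs:quotient_relation}) shows that the $\hmU$ and $\cmU$ entries among the surviving critical cells are unchanged. So deleting the rows and columns of $\bds$ from the tracked relations gives the relations of $\omf$, and $\cV^{-\rho}$ is the updated vector field. This completes both claims of the theorem.
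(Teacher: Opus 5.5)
Your proposal is correct and follows the same route as the paper's proof: Theorem~\ref{thm:when_you_can_move_to_diagonal} plus the transposition updates for Step~\ref{step:move}, then Proposition~\ref{prop:reversing_path}, Theorem~\ref{thm:cancelation_is_path_reversing} and Theorem~\ref{thm:cancelation} for Step~\ref{step:reverse}. You also make explicit two things the paper leaves implicit: that $\bds$ is critically shallow once its components are consecutive among critical cells, and that Theorem~\ref{thm:lefschetz_is_safe} preserves the surviving relations.

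The point you flag as delicate, which the paper simply builds into Step~\ref{step:move}, is settled by the two squeezes you mention. First, Proposition~\ref{prop:pre_allowed_move_right} with the gap just below $\dth\bds$ leaves in the interval only cells $x$ with $x\pathh{}\bth\bds$. Then Proposition~\ref{prop:pre_allowed_move_down} with the gap just above the new $\bth\bds$ keeps only those that also satisfy $\dth\bds\pathh{}x$, and these lie on $\rho$ by uniqueness. So, contrary to the reason you give, a trapped vector can lie on one of the two kinds of paths; it just cannot lie on both.
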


\begin{proof}
    We start by showing that $\upbd(\omf) = \upbd(\mf)\setminus\{\bds\}$. Step~$\ref{step:move}$ does not cause any changes in the pairing by Theorem~\ref{thm:when_you_can_move_to_diagonal}. A $\dmf$ constructed in Step~\ref{step:reverse} has the same off-diagonal pairs as the previous one, except $\bds$, due to Theorem~\ref{thm:cancelation_is_path_reversing} and Theorem~\ref{thm:cancelation}.

\noindent Proposition~\ref{prop:vectors_not_create_critical_relations} and Theorem \ref{thm:cancelation_is_path_reversing} imply that it suffices to apply the update pattern and check its conditions only during transpositions of critical cells in Step~\ref{step:move}. This results in the updated relations among critical cells at the end of the algorithm.

\noindent After the update, the new vector field $\cV_{\omf} = \cV^{-\rho}$, where $\rho$ is a~unique path. We know the birth-death pairs $\bd(\omf)$, as well as all relations between critical cells after the application of the update patterns.
\end{proof}

\noindent The proof of Theorem \ref{thm:algorithm_works} also proves Theorem \ref{thm:main-theorem}. 
For an iterative execution of the algorithm, we can use its output as an input for the next run; one only needs to provide the next eligible pair.
Finally, we consider how much the new constructed $\dmf$ $\omf$ differs from the original one.

\begin{proposition}
    Let $X$ be filtered by a~$\dmf$ $\mf$, and $\omf$ be its topological simplification constructed by our algorithm, which removes pair $\bds$. Then, the difference between $\omf$ and $\mf$ is bounded by the lifetime of $\bds$, that is 
    $\max\limits_{x \in X} |\mf(\cx) - \omf(\cx)| \leq (\mf(\dth\bds) - \mf(\bth\bds))$.
\end{proposition}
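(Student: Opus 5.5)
We track how every cell's value moves through the two stages of the algorithm and show that, in each stage, no cell changes by more than the current lifetime $L \coloneqq \mf(\dth\bds)-\mf(\bth\bds)$ of $\bds$. Since every allowed move only shrinks the lifetime, the bound then follows from the triangle inequality. To make that work, we first strengthen the bound slightly: every value touched is kept inside the window $[\mf(\bth\bds),\mf(\dth\bds)]$ of the original $\mf$. Then any two functions in the chain differ by at most $L$ on every cell, because each cell either keeps its value or has both its old and new values inside this window.

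\emph{Step~\ref{step:move}.} Each $\mf_{i+1}$ in Theorem~\ref{thm:when_you_can_move_to_diagonal} comes from Proposition~\ref{prop:pre_allowed_move_right} or Proposition~\ref{prop:pre_allowed_move_down}.
\begin{enumerate}[label=(\alph*)]
  \item In Proposition~\ref{prop:pre_allowed_move_right}, only cells $x$ with $\mf_i(x)\in[\mf_i(\bth\bds),\xi]$ change. Their new value $t_x\delta+(1-t_x)\xi$ lies in $[\delta,\xi]\subset(\mf_i(\bth\bds),\mf_i(\dth\bds))$.
  \item In Proposition~\ref{prop:pre_allowed_move_down}, only cells with $\mf_i(x)\in[\xi,\mf_i(\dth\bds)]$ change, and their new values lie in $[\xi,\delta]$.
\end{enumerate}
In both cases, old and new values lie in $[\mf_i(\bth\bds),\mf_i(\dth\bds)]$. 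By induction, the interval $[\mf_i(\bth\bds),\mf_i(\dth\bds)]$ is nested inside the original window $[\mf(\bth\bds),\mf(\dth\bds)]$, because birth only increases and death only decreases. So every cell that is ever modified had its original value in the original window and always stays there. Every other cell keeps its value throughout.

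\emph{Step~\ref{step:reverse}.} Proposition~\ref{prop:reversing_path} reassigns values only to cells of $\rho$. Each new value $\mf_n(x_{m-2\lfloor i/2\rfloor})$ is the value of another cell of $\rho$. By hypothesis, all cells of $\rho$ have values in $[\mf_n(\bth\bds),\mf_n(\dth\bds)]$, which is contained in the original window. Cells of $\rho$ that were never touched in Step~\ref{step:move} have unchanged values, so they also lie in that window.

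Combining the two steps, each cell $x$ either satisfies $\omf(x)=\mf(x)$, or both $\mf(x)$ and $\omf(x)$ lie in $[\mf(\bth\bds),\mf(\dth\bds)]$. Hence $|\mf(x)-\omf(x)|\le L$.

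The main obstacle is justifying that every modified cell started inside the original window. This holds in Propositions~\ref{prop:pre_allowed_move_right} and~\ref{prop:pre_allowed_move_down} because the modification condition restricts to cells whose current values lie in $[\mf_i(\bth\bds),\xi]$ or $[\xi,\mf_i(\dth\bds)]$, and by the nesting argument these current values lie inside the original window. For $\rho$, it holds because of the condition $\mf^{-1}([\mf(\bth\bds),\mf(\dth\bds)])=\rho$ in Proposition~\ref{prop:reversing_path}.
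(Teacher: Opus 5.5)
Your proof is correct and follows the same route as the paper. Every cell modified in Propositions~\ref{prop:pre_allowed_move_right}, \ref{prop:pre_allowed_move_down} and~\ref{prop:reversing_path} has both its old and new value inside $[\mf(\bth\bds),\mf(\dth\bds)]$, and your nesting induction over the windows $[\mf_i(\bth\bds),\mf_i(\dth\bds)]$ makes explicit what the paper's one-line proof leaves implicit. The opening appeal to the triangle inequality is not needed, since the window argument alone gives the bound directly.
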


\begin{proof}
    It follow directly from the fact that in Propositions \ref{prop:pre_allowed_move_right}, \ref{prop:pre_allowed_move_down} and \ref{prop:reversing_path}, we change only the values of the cells between $\mf(\bth\bds)$ and $\mf(\dth\bds)$, and if the value of the $\dmf$ is changed on $\cx$, then the resulting value also lies between $\mf(\bth\bds)$ and $\mf(\dth\bds)$.
\end{proof}

\subsection{Complexity}
We note that checking if a pair $\bds$ can serve as an input to the algorithm takes $\mathcal{O}(n \log n)$ time, see Appendix~\ref{sec:complexity}; there are at most $c$ such pairs to check.
The complexity of the algorithm is dominated by the cost of checking if moving pair $\bds$ past pair $\bdt$ requires updating relations between birth-death pairs, whenever $\bdt \norel \bds$.
Computing \(\mD_{n+1}^{\bds,\bdt}\) can clearly be done in \(\mathcal{O}(n^2)\) time; however, in the Appendix \ref{sec:complexity} we show that it can be reduced to \(\mathcal{O}(n)\).
 Therefore, the worst case running time is $\mathcal{O}(c \cdot n)$, where $c$ is the number of birth-death pairs, and $n$ is the number of cells in the complex.

 \subsection{Summary}

We presented a~new criterion for removing a~fixed birth-death pair.
We have also shown that for every pair that satisfies this criterion, it is possible to construct a~homotopy, which moves this pair into the diagonal.
The paper opens a~number of questions.

\begin{bracketenumerate}
    \item How does the order of cancellations affect the possibility of canceling the remaining pairs? Is there an optimal order? Can we find a~hierarchy of cancellations using this order?
    \item Is the criterion exhaustive? That is, are there other removable pairs that are not captured by the criterion?
    \item Is it possible to weaken the criterion by proper manipulation of the pairs generating the forbidden regions? For example, if forbidden region of $\bth\bds$ and forbidden region of $\dth\bds$ intersect, is it possible to manipulate other cells to clear a~path to the diagonal for $\bds$, and restore their values after the cancellation?
    \item Is it possible to parallelize the cancellation process? If so, for which pairs?
\end{bracketenumerate}

\begin{figure}[H]
  \centering
  \includegraphics[width=0.83\linewidth]{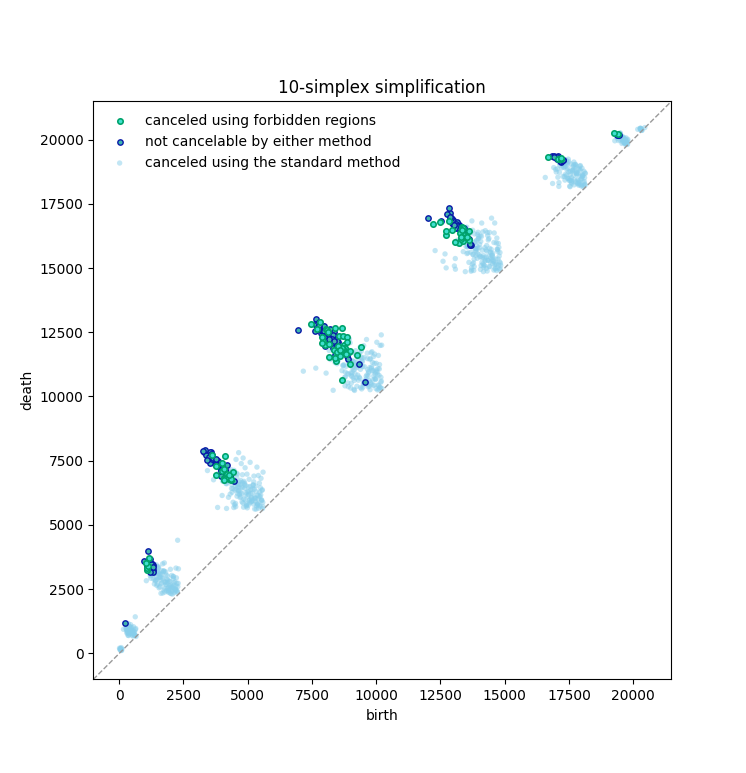}

  \caption{
Persistence diagram of the 10-simplex, filtered by a random injective $\dmf$ such that every birth-death pair of dimension $n$ is separated from pairs of dimensions $n+1$ and $n-1$. We apply a procedure that first simplifies $\dmf$ by the standard method, i.e., path reversing between shallow pairs. When there is no reversible shallow pair left, we continue using the algorithm described in this paper. We made multiple passes canceling any pair that met the algorithm’s assumptions. We stopped when there was no reversible pair with a path between forbidden regions. Pairs of different types (canceled by the standard method, canceled using forbidden regions, not cancelable) are denoted by different colors. Figure \ref{fig:result_zoom} zooms-in on the pairs in dimension 4.}
  \label{fig:result}
\end{figure}

\begin{figure}[ht]
    \centering
    \includegraphics[width=1\linewidth]{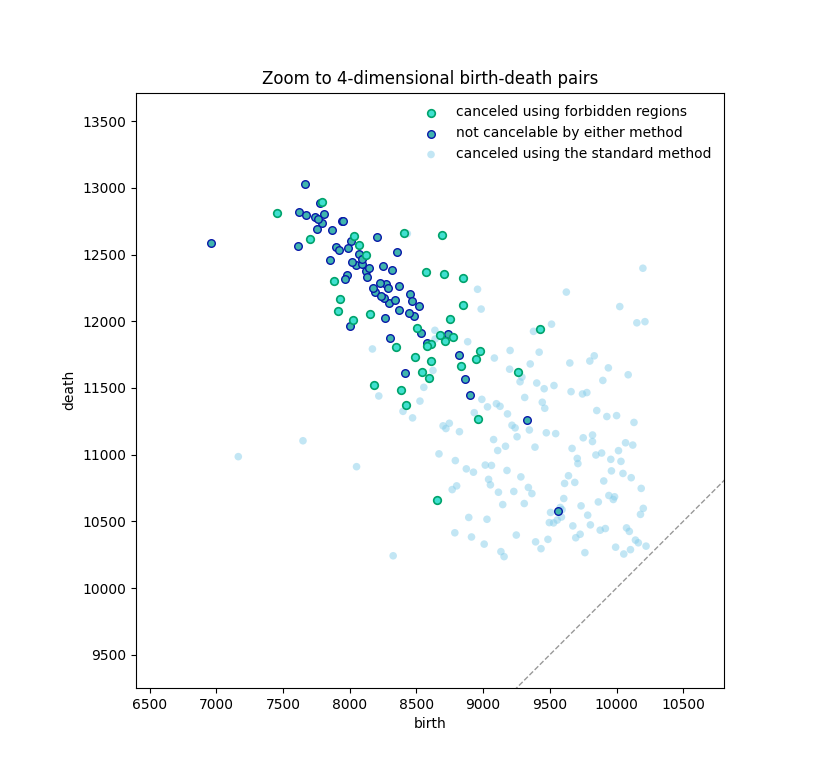}
    \caption{Birth-death pairs in dimension 4 from Figure \ref{fig:result}.}
    \label{fig:result_zoom}
\end{figure}

\subsection{Acknowledgements}
\noindent Jakub Leśkiewicz wants to thank his supervisor, Prof. Marian Mrozek, for
scientific guidance, patience, and opportunity to delay the rest of his duties while writing this work.
The author also extends thanks to his entire family, to Zuzanna Świątek, and to Mikołaj Kardyś,
BEng, MSc, for providing meals during the most intensive periods of work.





\bibliography{bibliography}

\appendix

\begin{appendices}

\section{Technical proofs}\label{sec:technical}

\LefschetzIsSafe*

    \begin{observation}\label{obs:column-in-R-as-a-sum-of-columns-in-D}
    Every column $\hmR_n[:,z]$ (and $\hat{\hmR}_n[:,z]$, respectively) is a sum of column $\hmB_n[:,z]$ and columns to the left of $z$, 
        which are already reduced at the moment of addition. 
    Moreover, there exist columns that do not need any reduction, that is, $\hmB_n[:,x] = \hmR_n[:,x]$. 
    Recursively, every column $\hmR_n[:,z]$ can be expressed as a sum of columns in $\hmB_n$ in an unambiguous way.
    In particular, it is directly given by matrix $V_{n} =  \hmU_n^{-1}$, which can also be computed during the lazy reduction (see \cite{Vineyards,BigSteps}).
    \end{observation}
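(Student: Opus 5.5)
We would prove the observation directly from the mechanics of Algorithm~\ref{alg:lazy-reduction}, by maintaining an invariant through the loop. Alongside $\mR_n$ and $\mU_n$ we would track an auxiliary matrix $V_n$, initialized to the identity. Whenever the algorithm performs $\mR_n[:,\cy]\gets \mR_n[:,\cy]+\mR_n[:,\cx]$, we would perform the matching column operation $V_n[:,\cy]\gets V_n[:,\cy]+V_n[:,\cx]$. The claimed invariant is that $\mR_n=\hmB_n V_n$ holds at every step. It holds initially, and it is preserved because the update of $\mR_n$ is exactly right multiplication by the elementary matrix $E=I+e_{\cx}e_{\cy}^{T}$ (with $\cx$ preceding $\cy$), and the update of $V_n$ is the same right multiplication.

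Next we would check that $V_n=\mU_n^{-1}$ throughout. Over $\mathbb{Z}_2$ we have $E^{-1}=E$, and the row operation $\mU_n[\cx,:]\gets\mU_n[\cx,:]+\mU_n[\cy,:]$ is left multiplication by $E$. Hence if $V_n\mU_n=I$ before a step, then afterwards $(V_nE)(E\mU_n)=I$. Since $\cx$ always precedes $\cy$, both matrices remain upper triangular with unit diagonal. At termination this gives $\hmB_n=\mR_n\mU_n$ with $V_n=\mU_n^{-1}$.

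For the first sentence of the observation, note that while column $\cy$ is being processed only earlier columns $\cx$ are added to it. By the left-to-right order of the outer loop, these have already been fully processed, i.e.\ they are already reduced at the moment of addition. Consequently $\mR_n[:,\cy]=\hmB_n[:,\cy]+\sum_{\cx}\mR_n[:,\cx]$, where $\cx$ ranges (mod $2$) over the earlier columns added to $\cy$. Columns for which the while-condition fails immediately, for instance the leftmost nonzero column or zero columns, satisfy $\hmB_n[:,\cx]=\mR_n[:,\cx]$. This gives the existence claim; if such a column is wanted in every case, the first column of $\hmB_n$ in $\mf$-order always qualifies.

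Recursively substituting the expressions of the earlier reduced columns is well founded, since indices strictly decrease, and it expresses $\mR_n[:,z]$ as a sum of columns of $\hmB_n$. That this expression is exactly the column $V_n[:,z]$ follows from the invariant $\mR_n=\hmB_nV_n$. The expression is unambiguous because $V_n$ is uniquely determined by the deterministic run of the algorithm, and equals $\mU_n^{-1}$. The same argument applies verbatim to the quotient matrix $\hmBaft_n$, yielding $\hat{\mR}_n$ and $\hmUaft_n$.

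The only delicate point is the bookkeeping that the row operation on $\mU_n$ is the inverse of the column operation on $V_n$. We would settle it by the elementary-matrix computation $E^2=I$ over $\mathbb{Z}_2$ above.
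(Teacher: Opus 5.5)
Your argument is correct and follows the route the paper intends. The paper gives no separate proof, only the sketch built into the observation plus a pointer to the vineyards literature, and there exactly this bookkeeping is the standard justification: maintain $R_n = D_n V_n$ under the same column operations, and get $V_n U_n = I$ from the elementary matrix $E$ with $E^2 = I$ over $\mathbb{Z}_2$. Your write-up makes that sketch rigorous, including the point that earlier columns are frozen once processed, so the recursive expansion is well founded and its coefficient vector is exactly the tracked column $V_n[:,z]$.
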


\begin{proof}[Proof of Theorem \ref{thm:lefschetz_is_safe}]
       During this proof, we assume that the Lefschetz cancellation of the pair $\bdb$ does not remove the row $\bth\bdb$ or the column $\dth\bdb$ from the matrix $\mD$. This assumption simplifies the calculations, and as after the cancellation, $\bth\bdb$ and $\dth\bdb$ correspond to a zero row and a zero column, respectively, and therefore do not affect the argument. We will prove only the part for the boundary matrix, as the proof for the coboundary matrix is analogous.
    Suppose the claim is not true, and let $\cy$ be the first column of $\hmUaft_n$ such that $\hmU_n[:,y] \neq \hmUaft_n[:,y]$.
    Since $\hmU_n[:,y]$ describes a~series of column additions, the statement that $\hmU_n[:,y] \neq \hmUaft_n[:,y]$ means that there is a~difference between columns added to $\hmB_n[:,y]$ and to $\hmBaft_n[:,y]$ during the lazy reduction. Let $x_1,x_2,x_3...x_n$ be a~series of indexes of columns added to $\hmB_n[:,y]$, and $\hat{x}_1,\hat{x}_2,\hat{x}_3...\hat{x}_k$, a~series of indexes of columns added to $\hmBaft_n[:,y]$, both arranged in the order of addition, that is, from the latest birth to earliest. 
    Our goal is show, that both series have to be identical, except fact that $\dth\bds$ may shown in $(x_i)_{i=0}^{n}$. Hence by contradiction assume that they are differ and $j$ is the greatest index such that for all $i \leq j$ we have that $x_i = \hat{x}_i$.

    \noindent Consider the state of the $\cy$-th column at the stage when the two series diverge. 
     In particular, denote 
        $\cp := \low \left(\sum\limits_{i = 1}^{j} \hmR_n[:,\cx_i{}] + \hmB_n[:,\cy] \right)$ 
        and $\cq := \low \left( \sum\limits_{i = 1}^{j}\hat{R}_n[:,\hat{x}_i] + \hmBaft_n[:,\cy] \right)$.

    \textbf{Case 1.}
    If $x_{j+1} = \dth\bds$, then $\hat{x}_{j+1}$ cannot be equal $\dth\bds$ as this column is zero column in $\hmBaft_n$. 
    However, this implies that $ p = \bth\bds$. 
    By Observation~\ref{obs:column-in-R-as-a-sum-of-columns-in-D} we can find a~set $A$ 
     such that
    $\sum\limits_{\cx  \in A} D_n[:,\cx]= \sum\limits_{i = 1}^{j} \hmR_n[:,\cx_i{}] + \hmB_n[:,\cy].$ 
    Therefore $p =\low \left(\sum\limits_{\cx  \in A} D_n[:,\cx] \right) = \bth\bds$, which implies that $\sum\limits_{\cx  \in A} D_n[\bth\bds,\cx] = 1$.


    However, since $x_i = \hat{x}_i$ for $i \leq j$, $q$ is described by the same set of indexes, i.e., $q = \low \left(\sum\limits_{\cx  \in A} \hat{\hmB}_n[:,\cx] \right)$.  Since $\hmBaft_n[:,x] = \hmB_n[:,x] + \hmB_n[\bth\bds,x] \cdot \hmB_n[:,\dth\bds]$, this yields that:

    \begin{align*} 
    \cq  &= 
    \low \left( \sum\limits_{x \in A}\hmB_n[:,x] + \hmB_n[:,\dth\bds]\sum\limits_{x \in A}\hmB_n[\bth\bds,x] \right)
    = \low \left( \sum\limits_{x \in A}\hmB_n[:,x] + \hmB_n[:,\dth\bds] \right)\\
    &= \low \left(\sum\limits_{i = 1}^{j} \hmR_n[:,\cx_i{}] + \hmB_n[:,\cy] + \hmB_n[:,\dth\bds] \right)\\ 
    &= \low \left(\sum\limits_{i = 1}^{j+1} \hmR_n[:,\cx_i{}] + \hmB_n[:,\cy] \right),
    \end{align*}
    where the last equality holds because the shallowness of $\bds$ implies that $\hmB_n[:,\dth\bds]=\hmR_n[:,\dth\bds]$.
    In other words, $\hat{x}_{j+1} = x_{j+2}$. 
    In fact, $k+1=n$ and $\hat{x}_{i} = x_{j+1}$ for all $i\in\{j+1,\ldots,k\}$.
        This will follow from an adaptation of \textbf{Case 2.}.
        In particular, since the series differ only by $\dth\bds$ it follows that columns $\hmU[:, y]$ and $\hmUaft[:, y]$ 
            also differ exactly by the entry corresponding to $\dth\bds$. To avoid confusion when analyzing \textbf{Case 2} under the assumption that \textbf{Case 1} holds,
we insert a zero column between $\hat{x}_{j}$ and $\hat{x}_{j+1}$,
shifting the indices so that $\hat{x}_{j+1} = 0$ and the original $\hat{x}_{j+1}$ becomes $\hat{x}_{j+2}$.

\noindent \textbf{Case 2.}
Suppose that $x_{j+1} \neq \hat{x}_{j+1}$ and $x_{j+1}$ is different from $\dth\bds$.
By Theorem \ref{thm:cancelation}, the pairings in $\hmR_n$ and $\hat{\hmR}_n$ remain the same,
    but $x_{j+1} \neq \hat{x}_{j+1}$ implies that $p \neq q$. 
    Observe that the cancellation of $\bds$ does not change any row which lies below $\bth\bds$.
    Thus, all additions caused by conflicts in rows below $\bth\bds$ must remain unchanged.
    This means $p$ and $q$ are rows above $\bth\bds$, because $p \neq \bth\bds$ and $q$ cannot be equal $\bth\bds$.
    Using the canonical representation to construct the~set of indexes $A$ such that $\cp = \low \left(\sum\limits_{\cx \in A} \hmB_n[:,\cx]\right)$. 
    As both series of columns additions are the same,
        $\cq = \low \left( \sum\limits_{\cx \in A}\hmBaft_n[:,x] \right)$. 
    Moreover, as:
    
    \[\hmBaft_n[:,x] = \hmB_n[:,x] + \hmB_n[\bth\bds,x] \cdot \hmB_n[:,\dth\bds],\] 
    
    \noindent we have $ \cq  = \low \left( \sum\limits_{x \in A}\hmB_n[:,x] + \hmB[:,\dth\bds]\sum\limits_{x \in A}\hmB_n[\bth\bds,x] \right)$. However, as $p$ is above $\bth\bds$, we have $  \sum\limits_{\cx \in A} \hmB_n[\bth\bds,\cx] = 0$, so $q = \low \left(\sum\limits_{\cx \in A} \hmB_n[:,\cx]\right) = p$, a~contradiction. 

\end{proof}

\InversingPathMorseComplex*

\noindent Before presenting the proof of the Theorem \ref{thm:cancelation_is_path_reversing} let us fix some notation. For a~path $\rho$ in $G_\cV$ we will denote its startpoint as $\pbeg{\rho}$ and its endpoint as $\pend{\rho}$. 
If $x, y \in \rho$, by $\rho[x, y]$ we define a~subpath $\xi$ of $\rho$ such that $\pbeg\xi = x$ and $\pend\xi = y$. 
If $\eta$ and $\zeta$ are paths such that $\pend\eta = \pbeg\zeta$, we define their \emph{composition} as the path $\eta \cdot \zeta := \eta \cup \zeta$. 
Finally, if $y, x \in X$, by $\pathhs(y, x)$ we denote the set of paths $\theta \in G_{\cV}$ such that $\pbeg\theta = y$ and $\pend\theta = x$. 

\begin{proof}[Proof of Theorem \ref{thm:cancelation_is_path_reversing}]
    By Definition \ref{def:morse_complex}, the boundary coefficients of both $\cM(\cV)$ and $\cM(\cV^{-\rho})$ are determined by the number of paths joining the critical cells. 
    Thus, we prove the theorem by comparing the set of paths between $x, y \in \cM(\cV) \setminus \{t, s\}$ before and after reversing $\rho$.
    Note that the formula for $\hat{\mB}(x,y)$ (Definition~\ref{def:lefschetz-cancellation}) is nontrivial only when $\dim y - \dim x=1$, $\dim y=\dim t$ and $\dim x =\dim s$. 
    Hence, fix $k := \dim s$.

    \noindent Let $C=\pathhs_{\cV}(y, x)$, $C':=\pathhs_{\cV^{-\rho}}(y, x)$ and let $E:=C\cap C'$, that is, the set of paths unaffected by the reversal of $\varrho$. 
    Notice that every $\xi \in C$ is a~$k$-path.
    We claim that $\xi \in E$ if and only if $\xi \cap \rho = \emptyset$.
    It is straightforward that if $\xi \cap \rho = \emptyset$ then $\xi\in E$.
    To see the other implication, assume the contrary, that is, that there exists $p \in \xi \cap \rho$.
    We necessarily have $p \neq s$. Indeed, if $p = s$, since $x \neq s$, there exist a~cell on $\xi$ after $p$. As $p = s$ is a~critical cell, such cell must be a~$(k-1)$-dimensional, contradicting the fact that $\xi$ is a~$k$-path. Similarly, if $p = t$, there exists a~cell on $\rho$ preceding $p$, as $t \neq y$. Such cell would have to be $(k+2)$-dimensional, contradicting the fact that $\xi$ is a~$k$-path. 
    Therefore, $p$ belongs to a vector $v \in \cV$, which also implies that $p \neq x, y$. Let $q$ be the second component of the vector $v$. We claim that $q \in \xi$. 
    If $\dim p = k$, then the successor of $p$ on $\rho$ must be $q$ and if $\dim p = k+1$, then $q$ must be the predecessor of $p$ on $\xi$. 
    However, since $v \subset \rho$, this vector no longer exists in $\cV'$. Thus $\xi \not\in C'$. 


    
    \noindent Consider another collection of paths, $A$, consisting of all paths $\alpha$ such that $\pbeg\alpha=y$ and $\alpha\cap\rho=\{\pend\alpha\}$, that is, the only element of $\alpha$ in $\rho$ is its endpoint.
    Analogously, let $B$ denote the collection of all paths $\beta$ starting on $\rho$, that is $\pend\beta=x$ and $\beta\cap\rho=\{\pbeg\beta\}$.
    We necessarily have $\dim\pend\alpha=\dim s$ for any $\alpha\in A$ and
        $\dim\pbeg\beta=\dim t$ for every $\beta\in B$.
    This implies that the set of endpoints of paths in $A$ and startpoints of paths in $B$ are disjoint.
    
    
    \noindent Define $r \colon A\rightarrow\NN$ as 
        $r(\alpha):=\#\{\beta\in B\mid \pend\alpha\pathh{\cV}\pbeg\beta\}$, that is, the number of paths in $B$ with the starting point further along $\rho$ than $\pend\alpha$.
    We claim that
    \begin{equation}\label{eq:counting_connecting_paths}
        \# C = \# E + \sum_{\alpha\in A} r(\alpha).
    \end{equation}
    Indeed, we can associate every choice of $\alpha\in A$ and $\beta\in B$ such that 
    $\pend\alpha\pathh{\cV}\pbeg\beta$ 
    with the unique path $\xi$ from $y$ to $x$
        constructed as a~concatenation of $\alpha$,
        the segment of $\rho$ from $\pend\alpha$ to $\pbeg\beta$,
        and $\beta$ 
        .
    \noindent Since $\xi \cap \rho \neq \emptyset$, we have $\xi\not\in E$.
    This shows ``$\geq$'' for \eqref{eq:counting_connecting_paths}. 
    To see the other inequality, it is is enough to observe that every $\xi\in C$ is either in $E$ or decomposes into
    $\alpha\in A$, $\beta\in B$ and the segment $\rho[\pend\alpha, \pbeg\beta]$, that is, $\xi$ is counted in $r(\alpha)$ in \eqref{eq:counting_connecting_paths}.
    
    \noindent Define analogously 
    $r'\colon A\rightarrow\NN$ as 
        $r'(\alpha):=\#\{\beta\in B\mid \pend\alpha\pathh{\cV'}\pbeg\beta\}$.
    With a~similar argument as before, we get
    \begin{equation}\label{eq:counting_connecting_revpaths}
        \# C' = \# E + \sum_{\alpha\in A} r'(\alpha).
    \end{equation}
    
    \noindent Since for any $\alpha\in A$ and $\beta\in B$ we have $\pend\alpha \neq \pbeg\beta$ and
        either $\pend\alpha \pathh{\cV} \pbeg\beta$ or $\pend\alpha \pathh{\cV'} \pbeg\beta$, 
        it follows that $r(\alpha) + r'(\alpha) = \# B$.
    Hence, we have
    \begin{align*}
        \sum_{\alpha\in A} r(\alpha) + \sum_{\alpha\in A} r'(\alpha) =
            \sum_{\alpha \in A} \# B = \# A \cdot \# B.
    \end{align*}
    This implies that
    \begin{align*}
        \label{eq:paths_after_inv}
        \# C + \# C' = 2 \cdot \#E + \#A \cdot \# B.
    \end{align*}
    
    \noindent The last thing to observe is that every path $\alpha\in A$ can be extended to a~path from $y$ to $s$.
    Even more, these extensions of $\alpha$'s generate all paths from $y$ to $s$.
    Thus, we have $\# A=\#\pathhs_\cV(y,s)$. Analogously, $\# B =\#\pathhs_\cV(t,x)$.
    Since $2 \cdot \# E = 0 \pmod 2$, the above equation translates into the following congruence
    \begin{equation*}
        \# \pathhs_{\cV^{-\rho}}(y,x)=
            \# \pathhs_{\cV}(y,x) + \# \pathhs_{\cV}(y,s) \cdot \# \pathhs_{\cV}(t,x) \pmod 2, 
    \end{equation*}
    which directly translates into the formula for $\hat{\mB}(x,y)$ (Definition~\ref{def:lefschetz-cancellation}).
    This concludes the proof.
\end{proof}

\WeCanOnlyLose*

\begin{proof}

      Assume that we perform a~transposition with a~component of $\bdt$. Because of Theorem~\ref{thm:mixed_transposition}, we consider only birth--birth and death--death transpositions.

      \noindent Consider birth--birth transposition. Because the incoming relation arrows of $\bds$ depend only on $\hmU[:,\dth\bds]$, it suffices to check how transposition affects the column $\hmU[:,\dth\bds]$. If $\dth\bds < \dth\bdt$, then the update is adding row $\dth\bdt$ to $\dth\bds$. However, since $\hmU_{n}$ is upper-triangular, $\hmU_{n}[\dth\bdt,\dth\bds] = 0$, so we do not modify $\hmU_{n}[:,\dth\bds]$.  If $\dth\bdt < \dth\bds$, as we do not switch pairing, we can assume that we are in case (1) of Theorem~\ref{thm:birth_transposition}. However, because $\dth\bdt < \dth\bds$, $\mD_{n}^{\bds,\bdt}[\bth\bdt,\dth\bds] = 1$ implies, by Observation~\ref{obs:quotient_relation}, that $\bdt \homo \bds$, so the update pattern yields:   $\hmUaft_{n}[\dth\bdt,\dth\bds] = \hmUbef_{n}[\dth\bdt,\dth\bds]+\hmUbef_{n}[\dth\bds,\dth\bds] = 1 + 1 = 0.$ The proof for the death--death transposition case is similar. 
\end{proof}

\DmfHomotopy*

\begin{proof}[Proof o Theorem \ref{thm:dmf-homotopy}]
    Let $t \in (0,1)$. We first prove that for every $v_0 \in \cV_{f_0}$, there exists $v_t \in \cV_{f_t}$ such that $v_0 \subseteq v_t$. Fix $v_0 \in \cV_{f_0}$ and $a, b \in v_0$.
    If $v_0 \in \crit(\cV_{f_0})$, then $a$ is equal $b$, otherwise one is facet of another. In both cases $f_0(a)=f_0(b)$ and $f_1(a)=f_1(b)$, since $\cV_{f_0}=\cV_{f_1}$. Hence, $f_t(a)=f_t(b)$, which implies that $a, b \in v_t$ for some $v_t \in \cV_{f_t}$. Notice that such $v_t$ is unique, since $\cV_{f_t}$ is a~partition.
    This defines a~function $\Lambda \colon \cV_{f_0} \to \cV_{f_t}$, assigning $v_0$ to $v_t$. Next, we prove that $\Lambda$ is injective. Let $v_0, v'_0 \in \cV_{f_0}$ be distinct, and let $a~\in v_0$ and $a' \in v'_0$. We assume wlog that $f_0(a) < f_0(a')$. If $f_1(a) < f_1(a')$, then $f_t(a) < f_t(a')$, thus $v_t \neq v'_t$. Otherwise $f_0(a) < f_0(a')$ and $f_1(a) > f_1(a')$. This implies that $a'$ and $a$ are incomparable in the face relation. Note that this implies that any $b \in v_0$ is incomparable with any $b' \in v'_0$. Since elements of $\cV_{f_t}$ are connected, it follows that $v_t \neq v'_t$. Finally, since $v_0 \subseteq \Lambda(v_0)$,
    \[
        \card{X}=\card{\bigcup \cV_{f_0}}=
        \sum_{v_0 \in \cV_{f_0}} \card{v_0}\leq
        \sum_{v_0 \in \cV_{f_0}} \card{\Lambda(v_0)}\leq
        \sum_{v_t \in \cV_{f_t}} \card{v_t}=
        \card{\bigcup \cV_{f_t}}=\card{X}.
    \]
    It follows that $\sum_{v_0 \in \cV_{f_0}} \card{v_0} = \sum_{v_0 \in \cV_{f_0}} \card{\Lambda(v_0)}$. This is possible only if $\card{v_0} = \card{\Lambda(v_0)}$ for $v_0 \in \cV_{f_0}$ and, consequently, $v_0=\Lambda(v_0)$.    
\end{proof}

\MoveRightProp*

\begin{proof}[Proof of Proposition \ref{prop:pre_allowed_move_right}]
    Directly from the construction we get that for every $x \in X$:  $\mf(x) \leq \omf(x)$ and whenever $\mf(\cx)\neq \omf(\cx)$, then $ \mf(x) < \delta < \omf(x) < \xi$. Finally, $\omf(\cx)$ belongs to the interval $[\delta,\xi]$ exactly when $\omf(\cx)\ne \mf(\cx)$, and $\omf(\cx)\notin[\delta,\xi]$ exactly when $\omf(\cx)=\mf(\cx)$.

 \noindent If $\mf(x)=\mf(y)$, then $(\cx,\cy)$ is a~vector and $\cx \pathh{\cV_{\mf}} a$ iff $y \pathh{\cV_{\mf}} \ca$. This guarantees that the value of $\cx$ and $\cy$ is modified in the same way, which yields that $\omf(\cx) = \omf(\cy)$. From the other side if $\omf(\cx) = \omf(\cy)$, then we have two cases. If $\omf(\cx) \in [\delta,\xi]$, then both values were modified in the same way, and their equality is equivalent to the fact that $t_x = t_y$. Hence, $\mf(\cx) = \mf(\cy)$. Otherwise if $\omf(x) \not\in [\delta,\xi]$ then $\mf(\cx) = \omf(\cx) = \omf(\cy) = \mf(\cy)$.

\noindent So we get that $\mf(\cx)=\mf(\cy)$ iff $\omf(\cx) = \omf(\cy)$. Now let us prove that $\omf$ is a~$\dmf$. If $\hmB(\cx,\cy)=1$, then consider three cases. (1) If both values of $\cx$ and $\cy$ were either modified, or not modified, then inequality holds. (2) If value of $\cx$ was not modified, while the value of $\cy$ was modified, then we get $\omf(\cx) \leq \omf(\cy)$ from the fact that $\mf(\cy) \leq \omf(\cy)$. (3) If the value of $x$ was modified while $y$ was not, then $\hmB(\cx,\cy) = 1$ implies $y \pathh{} x \pathh{} \bth\bds$. Because $\cy$ was not modified, then from the assumption $\xi < \mf(\cy)$, and because $\omf(x) < \xi$, we get the desired inequality.

\noindent Pairing property and almost injectivity follows directly from the fact that $\mf(x) = \mf(y)$ iff $\omf(x) = \omf(y)$. So $\omf$ is indeed a~$\dmf$ and $\cV_{\mf} = \cV_{\omf}$. Also from the construction we see that $\omf$ is equal $\mf$ on critical cells, except $\bth\bds$. So $\omf$ is pre-allowed move.
\end{proof}

\MoveDownProp*

\begin{proof}[Proof of Proposition \ref{prop:pre_allowed_move_down}]
    Proof is dual to proof of Proposition \ref{prop:pre_allowed_move_right}
\end{proof}

\RevPathProp*

\begin{proof}
    We only need to prove that $\omf$ is a~$\dmf$; the rest is immediate.
    The pairing property and the almost injective property follow directly from the construction, as does the fact that $\mf^{-1}([\mf(\bth\bds),\mf(\dth\bds)]) = \rho$. To show weak monotonicity, take $\cx,\cy \in X$ such that $D(\cx,\cy) = 1$ and consider three cases. (1) If both cells are on the path or outside the path, then weak monotonicity follows from  the construction. (2) If $\cx$ is on the path, and $\cy$ is not, then $\omf(y) = \mf(\cy) > \mf(\dth\bds)$ from the assumption and as $\omf(x) \in [\mf(\bth\bds),\mf(\dth\bds)]$, then we get the desired inequality. (3) Similarly, if $\cy$ is on the path and $\cx$ is not, then $\mf(\cy), \omf(y) \in [\mf(\bth\bds),\mf(\dth\bds)]$ and $\omf(x) = \mf(x) < \mf(\bth\bds)$
\end{proof}

\section{Complexity}\label{sec:complexity}

\subsection{Optimization}
 
 Before we calculate total complexity of the algorithm, let us show that one can check condition $\mD_{n+1}^{\bds,\bdt}[\bth\bdt,\dth\bds] = 1$ from Theorem $\ref{thm:birth_transposition}$ in linear time. In this subsection, we assume that $\bds,\bdt$ are $n$-dimensional birth-death pairs and whenever we refer to matrices $\mD,\mR,\mU$ the reader should assume that we mean the $n+1$ dimensional matrices. 
 Let $\mV = \mU^{-1}$, so $\mD\mV = \mR$. Dually matrices $\cmD,\cmR,\cmV,\cmU$ are $n$-dimensional dual counterparts of $\mD,\mR,\mU,\mV$.

\begin{observation}\label{obs:lef_matrix_and_trans}
    If $\mf(\dth\beta)<\mf(\dth\alpha)$, $\mf(\bth\alpha) < \mf(\bth\beta)$ and $\mD^{\bds,\bdt}[\bth\bdt,\dth\bds] = 1$ then $\bth\beta \cohomo \bth\alpha$.
\end{observation}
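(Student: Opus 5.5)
The plan is to reduce the statement to a small matrix computation inside the cancelled complex $\mD^{\bds,\bdt}$ and then transport the resulting relation back to $X$. By Lemma~\ref{lem:order_not_important}, $\mD^{\bds,\bdt}$ is well defined. It is obtained from $\mD$ by a sequence of Lefschetz cancellations of shallow pairs, none of which is $\bds$ or $\bdt$. By Theorem~\ref{thm:cancelation} the pairs $\bds,\bdt$ survive every step. By Theorem~\ref{thm:lefschetz_is_safe}, in the form of Observation~\ref{obs:quotient_relation}, the entries $\hmU[\dth\bdt,\dth\bds]$ and $\cmU[\bth\bdt,\bth\bds]$ are unchanged by each step. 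So it suffices to prove $\bth\bdt\cohomo\bth\bds$ for the lazy reduction of $\cmD^{\bds,\bdt}$, the transpose of $\mD^{\bds,\bdt}$ with the reversed order.

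\textbf{Step 1: the $2\times 2$ block.} After all the cancellations, no pair remains in the bottom-right quadrant of $\bds$ or of $\bdt$, apart from $\bdt$ itself. I would argue, using Observation~\ref{obs:left_up_corner} and Corollary~\ref{cor:bd_pairs}, that the lazy reductions of column $\bth\bds$ in $\cmD^{\bds,\bdt}$ and of column $\dth\bds$ in $\mD^{\bds,\bdt}$ can only involve the columns $\bth\bdt$ and $\dth\bdt$, respectively. Hence the relevant data are the entries of rows $\bth\bds,\bth\bdt$ and columns $\dth\bdt,\dth\bds$. Write $a=\mD^{\bds,\bdt}[\bth\bds,\dth\bdt]$, $c=\mD^{\bds,\bdt}[\bth\bds,\dth\bds]$, $d=\mD^{\bds,\bdt}[\bth\bdt,\dth\bdt]$, and the hypothesis $\mD^{\bds,\bdt}[\bth\bdt,\dth\bds]=1$.

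\textbf{Step 2: reduce the coboundary columns.} In $\cmD^{\bds,\bdt}$ the column $\bth\bdt$ precedes $\bth\bds$, and the row $\dth\bds$ precedes $\dth\bdt$, because of the reversed order.
\begin{itemize}[label=$\cdot$]
\item Since $(\bth\bdt,\dth\bdt)$ is a pair, the low of column $\bth\bdt$ must be $\dth\bdt$. The only other candidate entry is in row $\dth\bds$, which lies above it, so this forces $d=1$.
\item Column $\bth\bds$ must end with low $\dth\bds$. If $a=1$, its initial low is $\dth\bdt$, which collides with column $\bth\bdt$, so the algorithm adds $\bth\bdt$ into $\bth\bds$. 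This is exactly $\cmU[\bth\bdt,\bth\bds]=1$, that is, $\bth\bdt\cohomo\bth\bds$ (and it forces $c=0$).
\end{itemize}

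\textbf{Step 3: the obstacle.} The main obstacle is excluding $a=0$, $c=1$. In that configuration the column $\bth\bds$ is already reduced and no cohomological relation arises. A direct computation of the homological side then gives $\dth\bdt\homo\dth\bds$ instead. I would try to rule this case out using the standing context in which $\mD^{\bds,\bdt}[\bth\bdt,\dth\bds]$ is consulted, namely case~(1) of Theorem~\ref{thm:birth_transposition}. If the configuration cannot be excluded, I would reformulate the observation: the hypothesis then yields $\bdt\homo\bds$ or $\bdt\cohomo\bds$, according to the value of $a$. In either case, Step~1 transports the conclusion back to the original complex.
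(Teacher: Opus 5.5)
Your reduction to the cancelled matrix $\mD^{\bds,\bdt}$, followed by transport back through Theorem~\ref{thm:lefschetz_is_safe}, is the paper's route. Your Steps~1--2 are correct: in $\mD^{\bds,\bdt}$ the pair $\bdt$ is shallow, only column $\bth\bdt$ can be added to column $\bth\bds$, and $\bth\bdt\cohomo\bth\bds$ holds exactly when $a=\mD^{\bds,\bdt}[\bth\bds,\dth\bdt]=1$. The paper's proof simply asserts that column $\bth\bdt$ is added to column $\bth\bds$ in the reduction of $\mD^{\perp,\bds,\bdt}$. That is the step you could not justify, and it cannot be justified: the configuration $a=0$, $c=1$ from your Step~3 really occurs, so the statement as written is false.

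\textbf{Counterexample.} Take the graph with vertices $v_0,u,w$ and edges $e_1=\{v_0,w\}$, $e_2=\{u,w\}$, with $\mf$ taking the values $0,1,2,3,4$ on $v_0,u,w,e_1,e_2$.
\begin{itemize}
\item In $\mD_1$, column $e_2$ has low $w$, collides with $e_1$, and reduces to $v_0+u$. So $\bdt=(w,e_1)$ and $\bds=(u,e_2)$, with $\dth\bdt\homo\dth\bds$.
\item The births $u,w$ and the deaths $e_1,e_2$ are consecutive. Nothing but $\bdt$ lies in the two bottom-right quadrants, so $\mD^{\bds,\bdt}=\mD_1$ and $\mD^{\bds,\bdt}[w,e_2]=1$.
\item In $\cmD_0$ (columns $w,u,v_0$, rows $e_2,e_1$), column $w=e_2+e_1$ has low $e_1$. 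Column $u=e_2$ already has low $e_2$, and nothing is added to it. Hence $\cmU_0[w,u]=0$, that is, $\bth\bdt\nocohomo\bth\bds$.
\end{itemize}
Here $a=\mD_1[u,e_1]=0$ and $c=1$. This situation is squarely case~(1) of Theorem~\ref{thm:birth_transposition}, since $\bdt\nocohomo\bds$ and $\bdt\homo\bds$. So appealing to that context cannot exclude it.

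\textbf{What is true.} The correct statement is the homological one, and your own block computation proves it without any case split. The hypothesis $b=\mD^{\bds,\bdt}[\bth\bdt,\dth\bds]=1$ puts an entry of column $\dth\bds$ strictly below its final low $\bth\bds$. By your Step~1, the only column available to clear it is $\dth\bdt$, which is already reduced with low $\bth\bdt$. Hence $\dth\bdt\homo\dth\bds$ holds in $\mD^{\bds,\bdt}$, and therefore in $\mD$ by Theorem~\ref{thm:lefschetz_is_safe}, whatever the value of $a$. This is sharper than your disjunction ``$\bdt\homo\bds$ or $\bdt\cohomo\bds$''.

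Your Step~2 proves the companion implication $\mD^{\bds,\bdt}[\bth\bds,\dth\bdt]=1\Rightarrow\bth\bdt\cohomo\bth\bds$. That is the one relevant to death transpositions (Theorem~\ref{thm:death_transposition}); the observation appears to mix the birth-transposition hypothesis with the death-transposition conclusion.

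For the paper's downstream use, either the corrected statement or your disjunction is enough. Given $\bdt\nocohomo\bds$, the case~(1) condition ``$\bdt\homo\bds$ or $\mD^{\bds,\bdt}[\bth\bdt,\dth\bds]=1$'' reduces to $\bdt\homo\bds$. So $\mD^{\bds,\bdt}$ indeed never needs to be computed when $\bdt$ lies in the bottom-right quadrant of $\bds$. Rather than trying to exclude $a=0$, state and prove the homological version.
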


\noindent Because column $\bth\bdt$ will be added to column $\bth\bds$ during the lazy reduction of $\mD^{\perp,\bds,\bdt}$, the same holds for $D^{\perp}$, from Theorem \ref{thm:lefschetz_is_safe}. This guaranties that the only moment when we need to calculate $\mD^{\bds,\bdt}[\bth\bdt,\dth\bds] = 1$ is when $\bdt$ is in the bottom-left quadrant of $\bds$ and passes by $\bds$ when moving to the right.

\begin{proposition}\label{prop:optimization}
    If $\mf(\dth\bdt) < \mf(\dth\bds)$, $\mf(\bth\bdt) < \mf(\bth\bds)$ and $\bth\bdt$ and $\bth\bds$ are consecutive in the $\mf$-order then $\mD^{\bds,\bdt}[\bth\bdt,\dth\bds] = \mR[\bth\bdt,\dth\bds]$.
\end{proposition}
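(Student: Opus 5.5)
Throughout, $\mD$, $\mR$, $\mV$ denote the $(n+1)$-dimensional matrices with $\mD\mV=\mR$. Set $a:=\dth\bds$, $b:=\bth\bdt$. The plan is to express $\mD^{\bds,\bdt}[:,a]$ as the column $\mD[:,a]$ plus a sum of columns of $\mD$ coming from the cancelled pairs, and then show this agrees with $\mR[:,a]$ in row $b$. Since $b$ is a birth cell with $\mf(b)<\mf(\bth\bds)$, $\bdt$ lies in the bottom-left quadrant of $\bds$, so neither pair is cancelled in forming $\mD^{\bds,\bdt}$. Only pairs in the bottom-right quadrants of $\bds$ or of $\bdt$ are cancelled. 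Those of $\bdt$ have births $>\mf(b)$ and deaths $<\mf(\dth\bdt)<\mf(a)$.

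\textbf{Step 1: what a cancellation does to column $a$.} A Lefschetz cancellation of a shallow pair $\bdb$ replaces column $a$ by
\[
\mD[:,a]+\mD[\bth\bdb,a]\,\mD[:,\dth\bdb].
\]
Iterating, $\mD^{\bds,\bdt}[:,a]=\mD[:,a]+\sum_{\bdb\in S}c_\bdb \mD[:,\dth\bdb]$ restricted to the surviving rows, where $S$ is the set of cancelled pairs, all with $\dth\bdb<_\mf a$. By Lemma~\ref{lem:order_not_important} the result does not depend on the order of cancellation. I will choose the order so that deaths are cancelled from left to right. Then at each step the coefficient $c_\bdb$ is exactly the entry in row $\bth\bdb$ of the current column $a$, so cancelling $\bdb$ zeroes that row.

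\textbf{Step 2: compare with the lazy reduction.} The lazy reduction of column $a$ repeatedly adds reduced columns $\mR[:,x]$ to clear the lowest entry. The columns it adds correspond to $x\homo a$, i.e.\ $\mU[x,a]=1$. By Observation~\ref{obs:left_up_corner}, every such $x=\dth\bdb$ has $\bdb$ in the bottom-right quadrant of $\bds$, so every such pair is in $S$. I want to show that both procedures agree on every row that is neither in $S$ nor equal to $\bth\bds$. This includes row $b$, since $b\notin S$ because $b$ is the birth of $\bdt$, which survives.

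For this I would use Theorem~\ref{thm:lefschetz_is_safe} and Observation~\ref{obs:quotient_relation}: after cancelling all of $S$, the column $a$ in the quotient is already reduced, and its reduction matrix column equals $\mU[:,a]$ with the cancelled indices removed. In the quotient, $\bds$ becomes shallow or is related only to $\bdt$. So the quotient column $\mD^{\bds,\bdt}[:,a]$ should equal the image of $\mR[:,a]$. The reason is that cancelling $\bdb$ modifies column $a$ by a multiple of $\mD[:,\dth\bdb]$, and $\mR[:,\dth\bdb]$ differs from $\mD[:,\dth\bdb]$ only by columns of further cancelled pairs. Taken modulo the span of the cancelled columns, restricted to surviving rows, the two sums coincide.

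\textbf{Main obstacle.} The delicate point is checking that the span of the cancelled columns, restricted to surviving rows, does not affect row $b$. A cancelled column $\mD[:,\dth\bdb]$ can have a nonzero entry in row $b$, so the equality of the two sums is not immediate. I would handle this with the consecutiveness hypothesis: $b$ and $\bth\bds$ are adjacent in $\mf$-order. Every cancelled pair $\bdb$ with an entry in row $b$ that matters must have $\bth\bdb$ above row $b$ in the matrix ordering. It therefore lies between the two births, or below $b$ and related via $\bdt$'s quadrant.

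Concretely, I would argue that the difference between $\mR[:,a]$ and $\mD^{\bds,\bdt}[:,a]$ is a combination of columns $\mD^{(\text{partial})}[:,\dth\bdb]$ whose lowest nonzero entry is $\bth\bdb$. Each such combination was added exactly to clear row $\bth\bdb$, and $\bth\bdb >_\mf \mf(\bth\bds)>_\mf b$, so row $b$ lies above the low of each. Then I need these added columns to vanish in row $b$. This is exactly where consecutiveness and the cancellation of $\bdt$'s bottom-right quadrant enter: by shallowness after cancellation, no remaining column other than $\dth\bdt$'s can have an entry in row $b$. I expect this step to need the most care, possibly by induction on the cancellation sequence.
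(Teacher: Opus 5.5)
Your setup is correct as far as it goes. Both $\mD^{\bds,\bdt}[:,a]$ and $\mR[:,a]$ equal $\mD[:,a]$ plus a combination of columns $\mD[:,\dth\bdb]$ of cancelled pairs. (For $\mR$ the relevant set is the support of $\mV[:,a]$, not just the $x$ with $x\homo a$, but it also lies in the bottom-right quadrant of $\bds$.) However, the whole content of the proposition is the step you leave open, and the mechanism you sketch for it is wrong. Shallowness of $\bdt$ in the quotient only says that $\dth\bdt$ is the earliest cofacet of $b$, i.e.\ no column to the \emph{left} of $\dth\bdt$ meets row $b$. Columns between $\dth\bdt$ and $a$ are unconstrained: cancelled columns from the quadrant of $\bds$ can meet row $b$, and so can column $a$ itself, whose entry in row $b$ is exactly what is being computed and may well be $1$. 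Also, no cancelled birth lies between $b$ and $\bth\bds$ or above row $b$: by consecutiveness every cancelled birth is strictly later than $\bth\bds$. Finally, agreement ``modulo the span of the cancelled columns'' says nothing about an individual row.

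The missing idea is that row $b$ plays no special role: the two columns agree entirely. Consecutiveness is needed only to show that the bottom-right quadrant of $\bdt$ is contained in that of $\bds$. Then every cancelled row $\bth\bdb$ lies strictly below $\low\mR[:,a]=\bth\bds$, so $\mR[\bth\bdb,a]=0$.

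The paper's proof carries an invariant. Set $A:=\{\bda\neq\bds : \mV[\dth\bda,a]=1\}$. At every stage, the current column $a$ equals $\mR[:,a]$ plus the sum of the \emph{current} columns $\dth\bda$ over the not-yet-cancelled $\bda\in A$. Cancelling a shallow $\bdb$ adds $\mD^{\mathrm{cur}}[\bth\bdb,a]\,\mD^{\mathrm{cur}}[:,\dth\bdb]$ to column $a$. By the invariant, this coefficient splits as $\mR[\bth\bdb,a]=0$ plus the row-$\bth\bdb$ entries of the remaining $A$-columns, which is exactly how those columns are themselves updated. If $\bdb\in A$, its own column becomes zero. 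Once all of $A$ is cancelled, the column is $\mR[:,a]$.

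Your route can also be closed without this invariant. The difference $w$ of the two columns is a combination of cancelled columns. It vanishes on all cancelled rows, since they are zeroed in the quotient and lie below the low of $\mR[:,a]$. The block of $\mD$ on cancelled rows and cancelled columns is invertible, because the sequence of shallow cancellations is Gaussian elimination with unit pivots. Hence the combination is trivial and $w=0$.
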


\begin{proof}
   During this proof, we adopt the same convention regarding Lefschetz cancellation as in the proof of Theorem \ref{thm:lefschetz_is_safe}. By $\mD_{\bdb}$ we denote the matrix $\mD$ after Lefschetz cancellation of a birth-death pair $\bdb$. 
   Observe that it follows from the lazy reduction that $\mR[:,\dth\bds] = \sum\limits_{\mV[\dth\bda,\dth\bds] = 1} \mD[:,\dth\bda]$. 
    Since $\mV$ is positive on the diagonal, we can define $A := \{ \bda \mid \mV[\dth\bda,\dth\bds] = 1 \} \setminus \{\bds\}$. 
    Note that all pairs in $A$ lie in the bottom-right quadrant of $\bds$. Then 
    \begin{align*}
        \mR[:,\dth\alpha] = \mD[:,\dth\bds]+\sum\limits_{\bda \in A} \mD[:,\dth\bda].
    \end{align*}

    \noindent Hence we have $\mD[:,\dth\alpha] = \sum\limits_{\bda \in A} \mD[:,\dth\bda] + \mR[:,\dth\bds]$. 
    Let $\bdb$ be an arbitrary shallow pair of $\mD$ lying in the bottom-right quadrant of $\bds$.
    Then, after the Lefschetz cancellation of $\bdb$ we get:

\begin{align*}
    \mD_{\bdb}[:,\dth\bds] = \mD[:,\dth\bds] +  \mD[\bth\bdb,\dth\bds]\cdot \mD[:,\dth\bdb] \\
    = \left(\mR[:,\dth\bds] + \sum\limits_{\bda \in A} \mD[:,\dth\bda]\right) + \left(\mR[\bth\bdb,\dth\bds] +\sum\limits_{\bda \in A} \mD[\bth\bdb,\bda] \right) \cdot \mD[:,\dth\bdb] \\
     = \left(\mR[:,\dth\bds] + \sum\limits_{\bda \in A} \mD[:,\dth\bda]\right) + \mR[\bth\bdb,\dth\bds] \cdot \mD[:,\dth\bdb] +\sum\limits_{\bda \in A} \mD[\bth\bdb,\bda] \cdot \mD[:,\dth\bdb] \\
          = \left(\mR[:,\dth\bds] + \sum\limits_{\bda \in A} \mD[:,\dth\bda]\right) + 0 +\sum\limits_{\bda \in A} \mD[\bth\bdb,\bda] \cdot \mD[:,\dth\bdb] \\
        = \mR[:,\dth\bds] + \sum\limits_{\bda \in A}\left( \mD[:,\dth\bda]+\mD[\bth\bdb,\bda] \cdot \mD[:,\dth\bdb]\right)  \\
        = \mR[:,\dth\bds] + \sum\limits_{\bda \in A} \mD_{\bdb}[:,\dth\bda]
\end{align*}
where $\mR[\bth\bdb,\dth\bds] = 0$ is a consequence of $\bdb$ lying in the bottom-right quadrant of $\bds$ in the persistence diagram. Now, observe, that whenever a canceled pair $\bdb$ is in $A$, we get: 
\begin{align*}
     \mR[:,\dth\bds] + \sum\limits_{\bda \in A} \hat{\mD}_{\bdb}[:,\dth\bda] 
        &= \mR[:,\dth\bds] + \hat{\mD}_{\bdb}[:,\dth\bdb]+\sum\limits_{\bda \in A\setminus\{\bdb\}} \hat{\mD}_{\bdb}[:,\dth\bda] \\
        &= \mR[:,\dth\bds] +\sum\limits_{\bda \in A\setminus\{\bdb\}} \hat{\mD}_{\bdb}[:,\dth\bda],
\end{align*}
The term $\hat{\mD}_{\bdb}[:,\dth\bdb]$ vanishes because:
\begin{align*}
    \hat{\mD}_{\bdb}[:,\dth\bdb] = \hat{\mD}[:,\dth\bdb] + \hat{\mD}[\bth\bdb,\dth\bdb] \cdot \hat{\mD}[:,\dth\bdb] = \hat{\mD}[:,\dth\bdb] + \hat{\mD}[:,\dth\bdb] = 0,
\end{align*}
where  $\hat{\mD}[\bth\bdb,\dth\bdb]  = 1$ follows from the shallowness of $\bdb$ in $\hat{\mD}$.
Hence, canceling a shallow pair from the bottom-right quadrant of $\bds$ reduces the size of $A$ by one (if it is a pair from $A$) or keeps it unchanged. 

\noindent We can apply the above reasoning to cancel pairs from the bottom-right quadrant of $\bds$ iteratively.
Denote by $\mD^{\bds}$ the boundary matrix after erasing all the pairs in the bottom-right quadrant of $\bds$. 
In particular, every pair in $A$ is canceled. 
Therefore, eventually, we have $\mD^{\bds}[:,\dth\bds] = \mR[:,\dth\bds]$.


\noindent Finally, observe that since $\bth\bdt$ and $\bth\alpha$ are consecutive in the $\mf$-order all pairs that lie in the bottom-right quadrant of $\bdt$ form a subset of the set of pairs that lies in the bottom-right quadrant of $\bds$. 
This gives us that $\mD^{\bds,\bdt}[:,\dth\bds] = \mD^{\bds}[:,\dth\bds] = \mR[:,\dth\bds].$

\end{proof}

\noindent Of course, the fact that it is enough to check an entry in \(\mR\) does not solve all difficulties.
During the journey to the diagonal we update only the matrix \(\mU\), 
    but we still need an efficient approach to quickly compute \(\mR[\bth\bdt,\dth\bds]\) without performing the lazy reduction on the updated boundary matrix.
Thus, we prove that the previously known update pattern for \(\mU\) induces an update pattern for \(\mV\).

\begin{proposition}\label{prop:update_V}
 Assume that $\bth\bds,\bth\bdt$ are consecutive rows in $\mD$  and $\mf(\dth\bdt) < \mf(\dth\bds)$. Then the transposition between $\bth\bdt$ and $\bth\bds$ causes an update of $V$ only if $\mU$ demands the update. The update pattern is given by formula:

$$\hat{\mV}[:,\dth\bds] = \mV[:,\dth\bds] - \mV[:,\dth\bdt]$$
\end{proposition}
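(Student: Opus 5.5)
The plan is to derive the update of $\mV=\mU^{-1}$ directly from the known update of $\mU$ in Theorem~\ref{thm:birth_transposition}, using that a transposition of two \emph{birth} cells $\bth\bds,\bth\bdt$ permutes only rows of $\mD=\mD_{n+1}$ and leaves its columns, and hence the column order of $\mU$ and $\mV$, untouched. By Theorem~\ref{thm:birth_transposition}, $\mU$ is updated in cases (1) and (2) by \eqref{eq:birth_birth_update}, and is unchanged in case (3). In case (3), $\hat{\mU}=\mU$ gives $\hat{\mV}=\mV$ immediately, which is the ``only if'' part of the statement.

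In cases (1) and (2) I write the update as a left multiplication. Set $E := I + e_{\dth\bdt}e_{\dth\bds}^{T}$, the elementary matrix that adds row $\dth\bds$ to row $\dth\bdt$. Then $\hat{\mU} = E\,\mU$. Because $\mf(\dth\bdt)<\mf(\dth\bds)$, the entry $(\dth\bdt,\dth\bds)$ lies strictly above the diagonal, so $E$ is upper unitriangular and $\hat{\mU}$ stays upper triangular and invertible. Inverting gives
\[
\hat{\mV}=\hat{\mU}^{-1}=\mU^{-1}E^{-1}=\mV\,(I - e_{\dth\bdt}e_{\dth\bds}^{T}),
\]
since $e_{\dth\bds}^{T}e_{\dth\bdt}=0$ (distinct cells), so $E^{-1}=I-e_{\dth\bdt}e_{\dth\bds}^{T}$. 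Right multiplication by this matrix changes only column $\dth\bds$: it subtracts column $\dth\bdt$ of $\mV$ from it. This is exactly $\hat{\mV}[:,\dth\bds]=\mV[:,\dth\bds]-\mV[:,\dth\bdt]$, and over $\mathbb{Z}_2$ the subtraction is the same as addition.

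The step I expect to need care is bookkeeping rather than the algebra. First, in case (2) the pairing switches, and the cells relabel as $(\bth\bdt,\dth\bds)$ and $(\bth\bds,\dth\bdt)$. Since $\mU$ and $\mV$ are indexed by columns, i.e.\ death cells, which are not permuted, the formula must be read with the original death-cell labels; I will state this explicitly. Second, I must check that $\hat{\mU}$ given by \eqref{eq:birth_birth_update} really is the $\mU$-factor of the lazy decomposition of the transposed matrix. This is the content of Theorem~\ref{thm:birth_transposition}, so I invoke it rather than reprove it. With these two points in place, the identity $\mD\mV=\mR$ holds for the transposed matrix, and the new $\mV$ can be maintained alongside $\mU$ in $O(n)$ time per update. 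That is the purpose of this proposition in the complexity analysis, together with Proposition~\ref{prop:optimization}.
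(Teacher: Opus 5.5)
Your proposal is correct and follows the paper's proof essentially verbatim: both write the update \eqref{eq:birth_birth_update} as $\hat{\mU}=E\,\mU$, where $E$ is the elementary matrix adding row $\dth\bds$ to row $\dth\bdt$, and then read off $\hat{\mV}=\mV E^{-1}$, whose right multiplication subtracts column $\dth\bdt$ from column $\dth\bds$; the case $\hat{\mU}=\mU$ trivially gives $\hat{\mV}=\mV$. Your extra remarks are sound clarifications that the paper leaves implicit: $E$ is upper unitriangular because $\mf(\dth\bdt)<\mf(\dth\bds)$, and $\mU,\mV$ are indexed by the unpermuted death cells, so the formula keeps the original labels even when the pairing switches.
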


\begin{proof}
    Let $\mU$ denote the matrix of homological relations before transposition and $\hat{\mU}$---the matrix of homological relations after transposition. 
    Similarly, denote $\mV = \mU^{-1}$ and $\hat{\mV}=\hat{\mU}^{-1}$. 
    Clearly, if $\mU = \hat{\mU}$ then $\mV = \hat{\mV}$. 
    By Theorem \ref{thm:birth_transposition} whenever $\mU \neq \hat{\mU}$ then $\hmUaft[\dth\bdt,:] = \hmUbef[\dth\bdt,:]+\hmUbef[\dth\bds,:]$. 
    However, this means that $\hmUaft = A_{\bdt,\bds} \mU $ where $A_{\bdt,\bds}$ is an elementary matrix which adds row $\bds$ to $\bdt$. Then $\hmUaft^{-1} = (A_{\bdt,\bds}\mU )^{-1} = \mU^{-1}A_{\bdt,\bds}^{-1} =  \mV A_{\bdt,\bds}^{-1}$. 
    Note that the inverse of the elementary matrix $A_{\bdt,\bds}$ is another elementary matrix. 
    In particular, when multiplied from left, the result is subtraction of column $\bdt$ from column $\bds$.
    Since we work over $\ZZ_2$, the subtraction is equivalent to an addition.
\end{proof}

\noindent Therefore, after each move we are able to update (in linear time!) not only the matrix $\mU$ but also $\mV$. This gives us the opportunity to utilize the following theorem.

\begin{proposition}\label{prop:calculating_R}
    The value of the updated entry $\mR[\bth\bdt,\dth\bds]$ can be calculated in linear time if matrix $\mV$ is known.
\end{proposition}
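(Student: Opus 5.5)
The plan is to use the identity $\mR = \mD\mV$, which holds throughout because the lazy reduction gives $\mD = \mR\mU$ with $\mV = \mU^{-1}$. Taking the single entry we need,
\[
\mR[\bth\bdt,\dth\bds] \;=\; \sum_{z} \mD[\bth\bdt,z]\cdot \mV[z,\dth\bds] \pmod 2,
\]
where $z$ ranges over the $(n+1)$-dimensional cells. This expresses the desired entry of $\mR$ as the inner product of row $\bth\bdt$ of $\mD$ with column $\dth\bds$ of $\mV$. So the task splits into two points: both vectors are available in their current (post-transposition) form, and the product costs linear time.

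First, the matrix $\mD$ never changes in value during the journey to the diagonal. A transposition only permutes rows and columns according to the new $\mf$-order; the entries $\mD(x,y)$ are fixed by the Lefschetz complex. Row $\bth\bdt$ of $\mD$ is therefore just the set of cofacets of $\bth\bdt$, read off directly from the complex. Second, Proposition~\ref{prop:update_V} keeps $\mV$ current: after each transposition we apply the column update $\hat{\mV}[:,\dth\bds] = \mV[:,\dth\bds] + \mV[:,\dth\bdt]$ whenever $\mU$ is updated. Each such update costs $O(n)$, so column $\dth\bds$ of the current $\mV$ is available at the moment we need it.

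We evaluate the sum over the at most $n$ indices $z$ with $\mD[\bth\bdt,z]=1$, or equivalently over the support of $\mV[:,\dth\bds]$, taking the parity of the intersection of the two supports. Scanning a length-$O(n)$ column and marking the cofacets of $\bth\bdt$ gives $O(n)$ time. If the matrices are stored restricted to the Morse complex (Corollary~\ref{thm:morse_complex}), the same formula applies with $\mD_{\cM}$. In that case, row $\bth\bdt$ of the Morse boundary matrix is also the object we must maintain, and it changes only during the final path reversal, not during the moves.

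The main obstacle is guaranteeing that the identity $\mR=\mD\mV$ holds for the matrices at the current step, with rows and columns in the current order. In particular, the $\mV$ obtained from the update rule must coincide with the inverse of the $\mU$ that the lazy reduction of the permuted $\mD$ would produce. This holds because the update patterns of Theorems~\ref{thm:birth_transposition} and \ref{thm:death_transposition} are exactly the lazy-reduction $\mU$ of the transposed matrix, and Proposition~\ref{prop:update_V} inverts that update. The remaining care is to use the consistent convention that row/column permutations are applied simultaneously to $\mD$, $\mU$ and $\mV$; I would state this explicitly and then conclude.
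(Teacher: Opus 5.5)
Your proposal is correct and follows the paper's own argument: the paper also uses $\mR=\mD\mV$ to write $\mR[\bth\bdt,\dth\bds]=\sum_{x:\,\mV[x,\dth\bds]=1}\mD[\bth\bdt,x]$, and evaluates this sum in linear time by iterating over row $\bth\bdt$ of $\mD$ with constant-time lookups in $\mV$. Your extra bookkeeping, that $\mD$ is unchanged up to permutation and that $\mV$ is kept current via Proposition~\ref{prop:update_V}, spells out what the paper leaves implicit in the phrase ``if matrix $\mV$ is known.''
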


\begin{proof}
 As $\mR[:,\dth\bds] = \sum\limits_{\mV[x,\dth\bds] = 1} \mD[:,x]$ then $\mR[\bth\bdt,\dth\bds] = \sum\limits_{\mV[x,\dth\bds] = 1} \mD[\bth\bdt,x]$. 
 Determining whether the entry of $\mV$ non-empty takes $\mathcal{O}(1)$ time, 
    as it is enough to iterate through the row $\mD[\bth\bdt,:]$.
\end{proof}

\noindent This indeed gives us technique to decide about update of $\mU$ and $\mV$ from Theorem \ref{thm:birth_transposition} in linear time. Moreover, we can formulate dual propositions for the Theorem \ref{thm:death_transposition}.

\begin{proposition}\label{prop:optimization2}
The decision whether the update given by Theorem \ref{thm:death_transposition} is required can be resolved in linear time assuming information from the matrix $\mV^{\perp}$.
\end{proposition}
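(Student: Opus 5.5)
We dualize the chain of results just established for the birth--birth case (Proposition~\ref{prop:optimization}, Proposition~\ref{prop:update_V} and Proposition~\ref{prop:calculating_R}), working with the coboundary matrix $\cmD_n$ and its lazy decomposition $\cmD_n\cmV = \cmR$, where $\cmV = (\cmU_n)^{-1}$. The condition to decide in case (1) of Theorem~\ref{thm:death_transposition} is $\mD_{n+1}^{\bds,\bdt}[\bth\bds,\dth\bdt]=1$. In coboundary terms this is the entry of the coboundary matrix after cancelling the shallow pairs in the relevant quadrants, at row $\dth\bdt$ and column $\bth\bds$. Lefschetz cancellation commutes with transposition, since $\hat{\cmB}(y,x)=\hat{\mB}(x,y)$ is given by the same formula.

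\textbf{Step 1: reduce to the nontrivial case.} Mirroring Observation~\ref{obs:lef_matrix_and_trans}, suppose $\mf(\bth\bds)<\mf(\bth\bdt)$, $\mf(\dth\bdt)<\mf(\dth\bds)$ and the entry equals $1$. Then during the lazy reduction of $\mD_{n+1}^{\bds,\bdt}$ the column $\dth\bdt$ is added to $\dth\bds$. By Theorem~\ref{thm:lefschetz_is_safe}, the same addition occurs in $\mD_{n+1}$, so $\bdt\homo\bds$ and the switch case (2) already decides the update. Hence the only case requiring computation is when $\bdt$ lies to the upper right of $\bds$ in the diagram, i.e.\ $\mf(\bth\bds)<\mf(\bth\bdt)$ and $\mf(\dth\bds)<\mf(\dth\bdt)$, with $\dth\bds,\dth\bdt$ consecutive.

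\textbf{Step 2: dual of Proposition~\ref{prop:optimization}.} In this configuration we show that $\mD_{n+1}^{\bds,\bdt}[\bth\bds,\dth\bdt] = \cmR[\dth\bdt,\bth\bds]$. We write $\cmR[:,\bth\bds]=\cmD[:,\bth\bds]+\sum_{\bda\in A}\cmD[:,\bth\bda]$, where $A$ consists of the pairs with $\cmV[\bth\bda,\bth\bds]=1$ other than $\bds$; all of them lie in the bottom-right quadrant of $\bds$. We then check that cancelling a shallow pair $\bdb$ in that quadrant preserves the identity $\cmD_{\bdb}[:,\bth\bds]=\cmR[:,\bth\bds]+\sum_{\bda\in A}\cmD_{\bdb}[:,\bth\bda]$ and removes $\bdb$ from $A$ if $\bdb\in A$. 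This uses $\cmR[\dth\bdb,\bth\bds]=0$ for $\bdb$ in the quadrant (the row of $\dth\bdb$ is below the low of column $\bth\bds$ in reversed order) and shallowness of $\bdb$.

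Because $\dth\bds,\dth\bdt$ are consecutive, the bottom-right quadrant of $\bdt$ adds no pairs that affect column $\bth\bds$. Pairs in it but not in the quadrant of $\bds$ have death between $\dth\bds$ and $\dth\bdt$, and there are none. After all cancellations, column $\bth\bds$ equals $\cmR[:,\bth\bds]$. This step is the main obstacle: the index bookkeeping under the reversed order must be checked carefully, especially the claim that $\cmR[\dth\bdb,\bth\bds]=0$ and that the quadrant inclusion holds for consecutive death cells.

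\textbf{Step 3: maintain $\cmV$ and read off the entry.} As in Proposition~\ref{prop:update_V}, the update \eqref{eq:death_death_update} is $\cmUaft_n = A\,\cmU_n$ with $A$ an elementary row-addition matrix. So $\hat{\cmV}=\cmV A^{-1}$, which is a single column addition $\hat{\cmV}[:,\bth\bds]=\cmV[:,\bth\bds]+\cmV[:,\bth\bdt]$, and it costs $O(n)$ per move. Finally, as in Proposition~\ref{prop:calculating_R}, $\cmR[\dth\bdt,\bth\bds]=\sum_{\cmV[x,\bth\bds]=1}\cmD[\dth\bdt,x]$, computed by one pass over row $\dth\bdt$ of $\cmD$ (the facets of $\dth\bdt$) with $O(1)$ lookups in $\cmV$. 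This gives the linear-time decision.
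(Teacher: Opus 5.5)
Your route is the paper's: its proof is the one sentence that the claim follows dually from Propositions~\ref{prop:optimization}, \ref{prop:update_V} and \ref{prop:calculating_R}. Steps 2 and 3 carry that dualization out correctly. This covers the quadrant inclusion for consecutive deaths, the vanishing $\cmR[\dth\bdb,\bth\bds]=0$, the column update of $\cmV$, and the single pass over the facets of $\dth\bdt$.

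\textbf{The error is in Step 1.} In the configuration $\mf(\bth\bds)<\mf(\bth\bdt)$, $\mf(\dth\bdt)<\mf(\dth\bds)$, the entry $\mD^{\bds,\bdt}_{n+1}[\bth\bds,\dth\bdt]$ lies in column $\dth\bdt$. It says nothing about which columns are added to $\dth\bds$. So your claim that $\dth\bdt$ is added to $\dth\bds$, giving $\bdt\homo\bds$ and case (2), is false.

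\emph{Counterexample.} Take the path with vertices $o,a,b$ and edges $e=\{a,b\}$, $f=\{o,a\}$, filtered by $\mf(o)<\mf(a)<\mf(b)<\mf(e)<\mf(f)$.
\begin{itemize}
\item The pairs are $\bds=(a,f)$ and $\bdt=(b,e)$. No other pair lies in either bottom-right quadrant, so $\mD^{\bds,\bdt}=\mD$, and $\mD[a,e]=1$.
\item Column $f$ already has low $a$, so nothing is added to it, and $\bdt\nohomo\bds$.
\item In the coboundary reduction, column $a$ has low $e$, the same as the preceding column $b$. So $b$ is added to $a$, and $\bdt\cohomo\bds$.
\item Hence transposing $e$ and $f$ falls under case (1), not case (2).
\end{itemize}

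\emph{Correct dual argument.} Work in $D^{\perp,\bds,\bdt}$ instead. Any column added to $\bth\bds$ is the birth cell of a pair cohomologically related to $\bds$. Such a pair lies in the bottom-right quadrant of $\bds$, and all of those except $\bdt$ have been cancelled. Row $\dth\bdt$ lies below the final low $\dth\bds$ in the reversed order. Hence the entry equals $1$ iff column $\bth\bdt$ is added to $\bth\bds$, i.e.\ iff $\bdt\cohomo\bds$. This transfers back to $X$ by Theorem~\ref{thm:lefschetz_is_safe}.

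So in this configuration the disjunction in case (1) is just the stored relation $\bdt\cohomo\bds$. Your conclusion survives: only the upper-right configuration requires computing an entry, and the resulting decision rule is unchanged.

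You dualized Observation~\ref{obs:lef_matrix_and_trans} literally, and its relation type is swapped in the same way: in the birth--birth setting the entry forces $\bdt\homo\bds$. The path $o,a,b$ with edges $\{o,b\}$, then $\{a,b\}$, shows this. The repair therefore belongs at that source as well.
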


\begin{proof}
    Follows dually to proofs of Propositions \ref{prop:optimization}, \ref{prop:update_V}, \ref{prop:calculating_R}.
\end{proof}

\noindent Now we are ready to estimate the complexity of the algorithm.

\subsection{Estimation}

\noindent Denote by $n,c$ respectively the number of cells in $\cV^{k}_{\mf}$ and the number of pairs in $\bd_{k}(h)$. 
  Assume that $\cV^{k}_{h}$ is given by graph $G_k\coloneqq(X_{k},E_{k})$, that is a restriction of graph $G_\cV$. 
  Moreover, for a fixed birth-death pair $\bds$, define $m(\bds)$ as the number of birth-death pairs in area $[\mf(\bth\bds),\mf(\dth\bds)] \times [-\infty,\infty] \cup [-\infty,\mf(\bth\bds)]\times[-\infty,\mf(\dth\bds)]$ of persistence diagram. Now note that:

\begin{bracketenumerate}

    \item Topological sort of the graph takes $\mathcal{O}(n + \card{E_{k}})$, and 
        the complexity of finding the number of paths between one pair is $\mathcal{O}(n+\card{E_k})$. 
        Therefore, finding a reversible pair takes  $\mathcal{O}(c \cdot (n+\card{E_k}))$ in the worst case. 

    \item Every pair $\bds$ has at most $m(\bds)$ points generating forbidden regions; 
        therefore, sorting these regions along corresponding axis costs $\mathcal{O}(m(\bds)\log m(\bds))$; once sorted, testing for intersections between the two types takes $\mathcal{O}(m(\bds))$ time. 

    \item For a fixed birth-death pair $\bds$ we perform at most  $m(\bds)$ allowed moves.

    \item Every allowed move is followed by computation of new filtration values which takes $\mathcal{O}(n)$; this follows by Propositions \ref{prop:pre_allowed_move_right} and \ref{prop:pre_allowed_move_down}.\label{complex:move}

    \item An allowed move requires checking the criterion for the update given by Theorems \ref{thm:birth_transposition} and \ref{thm:death_transposition};
        this takes at $\mathcal{O}(n)$
            (see Propositions \ref{prop:optimization} and \ref{prop:optimization2}).  \label{complex:criterion}
    \item \label{complex:update} Every update of $\hmU,\mV,\cmU,\cmV$ demands $\mathcal{O}(c)$ time as adding appropriate columns or rows.
    \item Due to \ref{complex:move},\ref{complex:criterion}
     and \ref{complex:update} the complexity of Step \ref{step:move} is $\mathcal{O}(m(\bds) \cdot n)$ 
    \item Reversing a path $\rho$, from Step \ref{step:reverse} takes $\mathcal{O}(\card{\rho})$ time
\end{bracketenumerate}

\noindent The final worst case scenario is $\mathcal{O}(c \cdot n)$ as $m(\bds)$ may be equal to $(c-1)$. However, in many cases, the observed complexity may be significantly smaller (when we need to pass over only a few pairs) and close to linear.


\end{appendices}

\end{document}